\newtheorem{prop}{Proposition}[section]
\newtheorem{thm}[prop]{Theorem}
\newtheorem{lem}[prop]{Lemma}
\newtheorem{cor}[prop]{Corollary}
\newtheorem{defn}[prop]{Definition}
\newtheorem{remark}{Remark}
\begin{document}
\title{A filtration on the higher Chow group of zero cycles on an abelian variety}
\author{Buntaro Kakinoki}
\date{}
\maketitle
\begin{abstract}
In this paper we extend Gazaki's results on the Chow groups of abelian varieties to the higher Chow groups.
We introduce a Gazaki type filtration on the higher Chow group of zero-cycles on an abelian variety, 
whose graded quotients are connected to the Somekawa type $K$-group. 
Via the \'{e}tale cycle map, we will compare this filtration 
with a filtration on the \'{e}tale cohomology induced by the Hochschild-Serre spectral sequence. 
As an application over local fields, 
we obtain an estimate of the kernel of the reciprocity map. 
\end{abstract}
%
\section{Introduction}
		Let $k$ be a field. Let $A$ be an abelian variety over $k$ of dimension $d$. 
In \cite{G}, Gazaki introduced a descending filtration $\{ F_{0}^{\nu} \}_{\nu \geq 0}$ 
on the Chow group $\mathrm{CH}_{0}(A) = \mathrm{CH}^{d}(A)$ of zero cycles on $A$
and calculated its graded quotients up to bounded torsion.
Let $s \geq 0$ be an integer.
Using the same method, we define a descending filtration $\{ F_{s}^{\nu} \}_{\nu \geq 0}$ on the higher Chow group $\mathrm{CH}^{d+s}(A, s)$ of zero cycles and calculate its graded quotients.
The structure of this paper is parallel with \cite{G}.

\begin{thm} 																						\label{Intro-1}
For any integers $r, s \geq 0$, there is a canonical isomorphism: 
$$
\Phi_{r,s}' : F_{s}^{r+s} / F_{s}^{r+s+1} \otimes \mathbb{Z}\left[ \frac{1}{r!} \right] 
\xrightarrow{\sim} S_{r}(k; A, \mathcal{K}_{s}^{\mathrm{M}}) \otimes \mathbb{Z}\left[ \frac{1}{r!} \right].
$$
Here $S_{r}(k; A, \mathcal{K}_{s}^{\mathrm{M}})$ is defined in section \ref{section1.1} as the quotient abelian group
of the Somekawa type $K$-group $K_{r}(k; A, \mathcal{K}_{s}^{\mathrm{M}})$ 
by an action of the $r$-th symmetric group $\mathfrak{S}_{r}$.
\end{thm}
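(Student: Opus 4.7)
The proof follows \cite{G} in spirit, with the new ingredient being the product structure relating higher Chow groups of zero cycles on $A$ to Milnor $K$-theory. The plan is to construct maps in both directions between $F_s^{r+s}/F_s^{r+s+1}$ and $S_r(k;A,\mathcal{K}_s^{\mathrm{M}})$ and to show that, after inverting $r!$, they compose to the identity on one side and to multiplication by $r!$ on the other. First I would construct a map $\Psi$ from $K_r(k;A,\mathcal{K}_s^{\mathrm{M}})$ sending a symbol $\{a_1,\ldots,a_r;\alpha\}_{L/k}$ (with $a_i \in A(L)$, $\alpha \in K_s^{\mathrm{M}}(L)$, and $L/k$ finite) to the zero cycle obtained by taking the external product of the points $[a_i] \in \mathrm{CH}_0(A_L)$ with $\alpha \in \mathrm{CH}^s(\mathrm{Spec}\,L, s)$, pushing forward to $\mathrm{CH}^{d+s}(A_L^r, s)$, then applying an iterated ``alternating sum'' built from the group law $A^r \to A$, and finally the norm $L/k$. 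Following Gazaki, one checks that the image lies in $F_s^{r+s}$, that the construction kills the Somekawa relations (projection formula and Weil reciprocity), and that post-composition with the projection to $F_s^{r+s}/F_s^{r+s+1}$ factors through the $\mathfrak{S}_r$-coinvariants, yielding $\Psi : S_r(k;A,\mathcal{K}_s^{\mathrm{M}}) \to F_s^{r+s}/F_s^{r+s+1}$.

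For the other direction, I would use the explicit description of generators of $F_s^{r+s}$ modulo $F_s^{r+s+1}$ coming out of the filtration's inductive definition via $n$-multiplication on $A$. Such generators ought to be expressible, after inverting $r!$, as pushforwards of products of the above form; the inverse $\Phi_{r,s}'$ then simply reads off the corresponding Somekawa symbol and projects to the coinvariants. Well-definedness on the graded quotient is the assertion that passing from a generator to an equivalent one modulo $F_s^{r+s+1}$ alters the symbol by a defining relation of $K_r(k;A,\mathcal{K}_s^{\mathrm{M}})$. The composite $\Phi_{r,s}' \circ \Psi$ should then equal multiplication by $r!$, arising because reconstructing an ordered tuple out of its symmetrization costs a factor of $r!$, while $\Psi \circ \Phi_{r,s}'$ should be the identity; both become isomorphisms after tensoring with $\mathbb{Z}[\tfrac{1}{r!}]$.

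The main obstacle, I expect, is verifying that $\Psi$ respects the Weil reciprocity relation built into $K_r(k;A,\mathcal{K}_s^{\mathrm{M}})$: given a smooth proper curve $C$ over a finite extension of $k$, an $r$-tuple of $A$-valued sections on $C$, and a symbol in $K_{s+1}^{\mathrm{M}}(k(C))$, one must show that the sum over closed points of $C$ of the corresponding pushed-forward cycles vanishes modulo $F_s^{r+s+1}$. Because $\mathcal{K}_s^{\mathrm{M}}$ now enters the symbol, this reciprocity check is considerably more intricate than in the $s=0$ case of Gazaki, since it forces one to intertwine tame-symbol computations in Milnor $K$-theory with Gazaki's cycle-theoretic filtration calculus on $A^r$ inside Bloch's higher Chow complex.
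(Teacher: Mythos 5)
Your first direction is essentially the paper's: the map you call $\Psi$ is the paper's $\Psi_{r,s}'$, given on a symbol $\{a_1,\dots,a_r,b\}_{F/k}$ by $\mathrm{Tr}_{F/k}\bigl((\lambda_F(a_1)*\dots*\lambda_F(a_r))\times[b]_F\bigr)$, where $*$ is the Pontryagin product (your ``iterated alternating sum built from the group law''), and you are right that the delicate point is compatibility with the Weil reciprocity relation \eqref{rel-2}. The paper settles that by quoting Akhtar's Weil reciprocity for higher Chow groups (Theorem \ref{1.4.}) after expanding the Pontryagin product into a signed sum of classes $[s_v(f_{\nu_1}+\dots+f_{\nu_j})]\times[\partial_v(g)]$; the projection-formula relation is handled more slickly, using the identity $\Phi_{r,s}'\circ\psi_{r,s}=r!\cdot pr$ together with the injectivity of \eqref{injection}.

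The gap is in the other direction. The paper does not define $F_s^{\nu}$ by an inductive procedure involving multiplication on $A$; it first constructs $\Phi_{r,s}'$ on all of $\mathrm{CH}^{d+s}(A,s)$ at the level of cycles, sending a closed point $x$ to the symbol $\{p_A\circ x,\dots,p_A\circ x,\{p_1\circ x,\dots,p_s\circ x\}\}_{k(x)/k}$ with the $A$-coordinate repeated $r$ times, and proves (Proposition \ref{prop phi}) that this kills boundaries $d_{s+1}(C)$ --- a computation dual to the one you flagged, matching the cubical boundary formula against the relation \eqref{rel-2} in $K_r(k;A,\mathcal{K}_s^{\mathrm{M}})$. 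It then \emph{defines} $F_s^{r+s}=\bigcap_{j=0}^{r-1}\mathrm{Ker}\,\Phi_{j,s}'$, so that the induced map $F_s^{r+s}/F_s^{r+s+1}\to S_r(k;A,\mathcal{K}_s^{\mathrm{M}})$ is injective by construction, and the theorem follows from $\Phi_{r,s}'\circ\Psi_{r,s}'=r!$ alone. Your plan to obtain $\Phi_{r,s}'$ by ``reading off the symbol'' from generators of the graded piece is circular: knowing that every class in $F_s^{r+s}$ is, modulo $F_s^{r+s+1}$ and up to $r!$, a pushforward of the products in question is precisely the surjectivity of $\Psi$ that is being proved, and for a multiplication-defined (Bloch--Beauville type) filtration the identification with the symbolic one is only known up to torsion. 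Finally, your two composite identities are incompatible: $\Phi_{r,s}'\circ\Psi=r!$ together with $\Psi\circ\Phi_{r,s}'=\mathrm{id}$ would force $(r!-1)\,\Phi_{r,s}'=0$; in the paper both composites are multiplication by $r!$.
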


This theorem is obtained by Gazaki in the case $s = 0$ \cite[Theorem 3.8]{G}. 
In this case, the first three steps of the filtration is described as 
$$
F_{0}^{0} = \mathrm{CH}_{0}(A) \supset 
F_{0}^{1} = \mathrm{Ker}(\deg_{A} : \mathrm{CH}_{0}(A) \twoheadrightarrow \mathbb{Z}) \supset 
F_{0}^{2} = \mathrm{Ker}(\mathrm{alb}_{A} : F_{0}^{1} \twoheadrightarrow A(k)).
$$
Moreover, the filtration $F_{0}^{*}$ agrees with the Bloch-Beauville filtration up to torsion \cite[Corollary 4.4, 4.5]{G}. 
We have $F_{0}^{d+1} \otimes \mathbb{Q} = 0$ 
by Bloch \cite[(0.1) Theorem]{Bl}, Beauville \cite[Proposition 1]{B} and Deninger-Murre \cite[Lemma 2.18]{DM}.
We refer the reader to \cite[Remark 4.6]{G} for a brief review of Beauville's argument.
These facts are generalized to $s \geq 0$ as follows.
Let $\pi : A \to \mathrm{Spec}\, k$ be the structure morphism.
The first few terms are given as follows:
$$
F_{s}^{0} = \dots = F_{s}^{s} = \mathrm{CH}^{d+s}(A, s) \supset 
F_{s}^{s+1} = \mathrm{Ker}( \pi_{*} : \mathrm{CH}^{d+s}(A, s) \twoheadrightarrow \mathrm{CH}^{s}(k, s) \cong K_{s}^{\mathrm{M}} (k) ).
$$
Up to torsion, each quotient agrees with the following eigenspace \cite[Proposition 4.8]{Su}: 
$$
F_{s}^{r+s} / F_{s}^{r+s+1} \otimes \mathbb{Q} \cong 
\{ \alpha \in \mathrm{CH}^{d+s}(A, s) \otimes \mathbb{Q} \, | \, 
m^{*} \alpha = m^{2d-r} \alpha \ \text{for all} \ m \in \mathbb{Z} \},
$$ 
considered in \cite{B} and \cite{Su}, where $m^{*}$ is the flat pull-back by the multiplication $m : A \to A$.
Sugiyama has shown in \cite[Theorem 1.3]{Su} that this eigenspace vanishes for $s \geq 0$, $r \geq 2d+1$.
One can ask whether $F_{s}^{\nu} = 0$ holds integrally for a sufficiently large $\nu$. 

Let $n$ be an integer invertible in $k$.
Denote the \'{e}tale cycle map by 
$$
\rho_{A, n}^{s} : \mathrm{CH}^{d+s}(A, s) \to H_{\text{\'{e}t}}^{2d+s}(A, \mathbb{Z}/n (d+s)).
$$
There is a descending filtration 
$H^{2d+s}(A, \mathbb{Z}/n (d+s)) = {\rm fil}_{{\rm HS}}^{0}H^{2d+s} \supset {\rm fil}_{{\rm HS}}^{1}H^{2d+s} \supset \dots \supset {\rm fil}_{{\rm HS}}^{2d+s}H^{2d+s} \supset 0$ 
induced by the Hochschild-Serre spectral sequence \eqref{H-S}.
In Proposition \ref{degeneration}, we give a sufficient condition 
for the spectral sequence \eqref{H-S} to degenerate at $E_{2}$-page.
Via the \'{e}tale cycle map, we compare the Gazaki type filtration $\{ F_{s}^{\nu} \}_{\nu \geq 0}$ 
with the filtration $\{ {\rm fil}_{{\rm HS}}^{\nu}H^{2d+s}\}_{\nu \geq 0}$.

\begin{thm}																						\label{Intro-2}
Let $r,s \geq 0$ be two integers.
Assume that $k$ is perfect, 
the Hochschild-Serre spectral sequence \eqref{H-S} degenerates at $E_{2}$-page 
and $n$ is coprime to $(r-1)!$.
Then we have
$$
\rho_{A,n}^{s} (F_{s}^{r+s}) \subset {\rm fil}_{{\rm HS}}^{r+s}H^{2d+s}.
$$
\end{thm}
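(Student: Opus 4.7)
The plan is to argue by induction on $r \geq 0$, paralleling Gazaki's strategy in the case $s = 0$.

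For the base case $r = 0$, since $F_s^s = \mathrm{CH}^{d+s}(A, s)$ one must show $\rho_{A,n}^s(F_s^s) \subset {\rm fil}_{\rm HS}^s H^{2d+s}$, and this holds automatically: $\dim A = d$ gives $H^q(\bar A, \mathbb{Z}/n(d+s)) = 0$ for $q > 2d$, hence the $E_2$-terms $E_2^{p,\, 2d+s-p}$ vanish for all $p < s$, so that ${\rm fil}_{\rm HS}^s = H^{2d+s}_{\text{\'{e}t}}(A, \mathbb{Z}/n(d+s))$.

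For the inductive step, assume the inclusion for $r-1$. The $E_2$-degeneration provides a canonical isomorphism ${\rm fil}_{\rm HS}^{r+s-1}/{\rm fil}_{\rm HS}^{r+s} \cong E_2^{r+s-1,\, 2d-r+1} = H^{r+s-1}(k, H^{2d-r+1}(\bar A, \mathbb{Z}/n(d+s)))$, and composing the restriction of $\rho_{A, n}^s$ to $F_s^{r+s-1}$ with the projection onto this quotient yields a homomorphism
$$
\bar\rho : F_s^{r+s-1} \longrightarrow H^{r+s-1}\bigl(k,\, H^{2d-r+1}(\bar A, \mathbb{Z}/n(d+s))\bigr),
$$
whose vanishing on $F_s^{r+s}$ is equivalent to the desired inclusion. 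Since by hypothesis $(r-1)!$ is a unit in $\mathbb{Z}/n$, the target of $\bar\rho$ has no $(r-1)!$-torsion, so it is enough to check that $\bar\rho \otimes \mathbb{Z}[1/(r-1)!]$ factors through the graded piece; by Theorem \ref{Intro-1} the latter is $S_{r-1}(k; A, \mathcal{K}_s^{\rm M}) \otimes \mathbb{Z}[1/(r-1)!]$.

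To produce the factorisation, I would construct, independently of $\rho_{A,n}^s$, a Somekawa-type Galois symbol
$$
\Psi : S_{r-1}(k; A, \mathcal{K}_s^{\rm M}) \longrightarrow H^{r+s-1}\bigl(k,\, H^{2d-r+1}(\bar A, \mathbb{Z}/n(d+s))\bigr),
$$
assembled from the Kummer map $A(L) \to H^1(L, A[n])$, the norm residue isomorphism $K_s^{\rm M}(L)/n \cong H^s(L, \mu_n^{\otimes s})$, the identification $H^*(\bar A, \mathbb{Z}/n) \cong \Lambda^* H^1(\bar A, \mathbb{Z}/n)$, cup products and corestrictions from finite extensions $L/k$, and then verify on generators the identity $\bar\rho \otimes \mathbb{Z}[1/(r-1)!] = \Psi \circ \Phi'_{r-1, s} \circ (\mathrm{projection})$ on $F_s^{r+s-1} \otimes \mathbb{Z}[1/(r-1)!]$, which immediately forces $F_s^{r+s}$ into the kernel of $\bar\rho$.

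The hard part is this final verification. A typical generator of $F_s^{r+s-1}$ is a Pontryagin-type pushforward $m_*(\alpha_1 \times \cdots \times \alpha_{r-1} \times \sigma)$ along the sum map $m : A^{r-1} \times \mathrm{Spec}\, L \to A$, where each $\alpha_i$ is essentially $[x_i] - [0]$ and $\sigma \in K_s^{\rm M}(L)$. Computing the \'{e}tale cycle class of such a generator requires applying the K\"unneth decomposition to $\bar A^{r-1}$, extracting the top $\Lambda^{r-1}$-component under $H^*(\bar A) = \Lambda^* H^1(\bar A)$, pushing forward via $m$, and matching the outcome modulo ${\rm fil}_{\rm HS}^{r+s}$ with the value of $\Psi$ on the corresponding symbol. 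The combinatorics of the symmetrisation produce precisely the factor $(r-1)!$ that motivates the coprimality hypothesis on $n$. This step extends Gazaki's argument for $s=0$ \cite{G} by incorporating the extra Milnor $K$-theoretic coordinate, and I expect it to absorb the bulk of the proof.
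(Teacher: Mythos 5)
Your overall architecture (induction on $r$, comparison of the cycle class with a Somekawa-type Galois symbol on the graded quotient, coprimality to $(r-1)!$ to invert the offending factor) matches the paper's, but the reduction of the key verification to Pontryagin-product generators is where the argument breaks down. The classes $\mathrm{Tr}_{F/k}\bigl(\lambda_F(a_1)*\cdots*\lambda_F(a_{r-1})\times[b]_F\bigr)$ generate only the subfiltration $G_s^{r+s-1}$ of Definition \ref{Gazaki type filtration}, and even after inverting $(r-1)!$ they generate $F_s^{r+s-1}$ only \emph{modulo} $F_s^{r+s}$ (Theorem \ref{canonical isomorphisms}). So to verify $\bar\rho=\Psi\circ\Phi'_{r-1,s}$ on all of $F_s^{r+s-1}\otimes\mathbb{Z}[1/(r-1)!]$ you must also check it on the remaining generators, namely those coming from $F_s^{r+s}$ itself --- and there $\Psi\circ\Phi'_{r-1,s}$ vanishes by definition of the filtration, so the check amounts to $\bar\rho(F_s^{r+s})=0$, which is precisely what you are trying to prove. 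As written, your plan only yields $\rho_{A,n}^s(G_s^{r+s})\subset{\rm fil}_{{\rm HS}}^{r+s}H^{2d+s}$, a strictly weaker conclusion.

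The paper avoids this circularity by never invoking Theorem \ref{Intro-1} or Pontryagin products in the proof of Theorem \ref{admiring theorem}. Instead it uses the surjection $\phi_A:\bigoplus_{x\in A_{(0)}}\mathrm{CH}^s(x,s)\twoheadrightarrow\mathrm{CH}^{d+s}(A,s)$ to lift an \emph{arbitrary} element of $F_s^{r+s}$ to a formal sum $\sum_i(b_i)_{x_i}$ supported on closed points, and the weight of the proof is carried by Lemma \ref{hs&cup}: for any such sum in ${\rm fil}^{r+s}L_s(A)$ one has $r^r\cdot pr_{r,s}\circ\psi_A(\alpha)=c_{r,s}(\alpha)=\sum_i\mathrm{Cor}_{k(x_i)/k}(\delta(x_i)\cup\dots\cup\delta(x_i)\cup b_i)$, which is exactly the Galois symbol of $\Phi'_{r,s}$ applied to the cycle; the square then commutes on the nose and $F_s^{r+s+1}\subset\mathrm{Ker}\,\Phi'_{r,s}$ gives the induction step. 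Note also that the proof of that lemma is not the K\"unneth computation on $\bar A^{\,r-1}$ you sketch: it proceeds by Bertini through a smooth curve $C\subset A$ containing the given points and the origin, factors through the Jacobian, and quotes Yamazaki's computation for $C^r$; the factor to be inverted arises as $j^j$ from the multiplication-by-$j$ map (via $j\circ i=i_j\circ\delta_j$), not from symmetrisation combinatorics. To repair your route you would need a cycle-class computation for a single closed point $x$ with coefficient $b$, not merely for Pontryagin products.
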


From now on, let $k$ be a finite extension of $\mathbb{Q}_{p}$. 
We state further consequences of Theorem \ref{Intro-1}. 

Using a result of Raskind and Spiess \cite[Theorem 4.5, Remark 4.4.5]{RS}, 
Gazaki has shown in \cite[Corollary 6.3]{G} that 
if $A$ has split semi-ordinary reduction, 
$F_{0}^{2} / F_{0}^{3} \otimes \mathbb{Z}[1/2]$ is the direct sum of a finite group and a divisible group
and $F_{0}^{\nu} / F_{0}^{\nu + 1}$ is divisible if $\nu \geq 3$.
In particular, for $\nu \geq 3$, we obtain a decomposition
$$
\mathrm{Ker} (\mathrm{alb}_{A}) / F_{0}^{\nu} 
\cong F_{0}^{2} / F_{0}^{3} \oplus \dots \oplus F_{0}^{\nu - 1} / F_{0}^{\nu}.
$$

In Theorem \ref{div&kernel} (i), we obtain an analogue in the higher case 
by using a result of Yamazaki \cite[Lemma 2.4, Proposition 3.1]{Y2}. 
If $A$ has potentially good reduction or split semi-abelian reduction and $s > 0$, 
the quotient $F_{s}^{\nu} / F_{s}^{\nu + 1}$ is divisible
if $\nu \geq 3$. In particular, for $s > 0$ and $\nu > s+1$, we obtain a decomposition
$$
\mathrm{CH}^{d+s}(A, s) / F_{s}^{\nu} 
\cong K_{s}^{\mathrm{M}} (k) \oplus \mathrm{Ker} (\pi_{*}) 
\cong K_{s}^{\mathrm{M}} (k) \oplus F_{s}^{s+1} / F_{s}^{s+2} \oplus \dots \oplus F_{s}^{\nu -1} / F_{s}^{\nu}.
$$

As an application of Theorem \ref{Intro-1} and \ref{Intro-2}, 
Gazaki gave an estimate of the kernel of a homomorphism induced by 
the Brauer-Manin pairing by using the filtration $\{ F_{0}^{\nu} \}_{\nu \geq 0}$.
By using $\{ F_{1}^{\nu} \}_{\nu \geq 0}$ similarly, 
we give an estimate of the kernel of the reciprocity map 
$\mathrm{rec}_{A} : SK_{1}(A) \to \pi_{1}^{{\rm ab}}(A)$ (see \cite{Sa2}).
For an abelian group $B$, we denote by $B_{{\rm div}}$ the maximal divisible subgroup of $B$.

\begin{thm}
Let $k / \mathbb{Q}_{p}$ be a finite extension. 
Let $A$ be an abelian variety of dimension $d$.
Denote $K_{1} = \mathrm{Ker} (\mathrm{CH}^{d+1}(A, 1) \cong SK_{1}(A) \xrightarrow{\mathrm{rec}_{A}} \pi_{1}^{{\rm ab}}(A))$.
Then $F_{1}^{3} \subset K_{1} \subset F_{1}^{2}$.
Moreover if $A = \mathrm{Jac} (C)$ is the Jacobian variety with potentially good reduction or split semi-abelian reduction
of a smooth proper geometrically connected curve $C$ over $k$ with $C(k) \neq \emptyset$,
then $F_{1}^{2} / F_{1}^{3}$ is the direct sum of a finite group and a divisible group
and $K_{1} / F_{1}^{\nu} \otimes \mathbb{Z}[1/2] 
= ( F_{1}^{2} / F_{1}^{\nu} )_{{\rm div}} \otimes \mathbb{Z}[1/2]$ for any $\nu \geq 3$.
\end{thm}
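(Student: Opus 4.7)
The plan is to split the theorem into three parts: the sandwich $F_1^3 \subset K_1 \subset F_1^2$, valid for general $A$; the structural decomposition of $F_1^2/F_1^3$ in the Jacobian case; and the identification of $K_1/F_1^\nu$ up to $2$-torsion.

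First I would establish $K_1 \subset F_1^2$ via functoriality. The pushforward $\pi_{*} : SK_1(A) \to SK_1(\mathrm{Spec}\, k) = k^{\times}$ fits into a commutative square with the classical local reciprocity map $\mathrm{rec}_k : k^{\times} \to \pi_1^{{\rm ab}}(k)$, which is injective for $k / \mathbb{Q}_p$ finite. Hence every class in $K_1 = \mathrm{Ker}(\mathrm{rec}_A)$ is annihilated by $\pi_{*}$, so it lies in $F_1^{s+1} = F_1^2$. For the lower bound $F_1^3 \subset K_1$, I would invoke Theorem~\ref{Intro-2}. The mod-$n$ reciprocity map is realized via the \'etale cycle map composed with the cup product pairing
\[
H^{2d+1}_{\text{\'{e}t}}(A, \mathbb{Z}/n(d+1)) \otimes H^{1}_{\text{\'{e}t}}(A, \mathbb{Z}/n) \longrightarrow H^{2d+2}_{\text{\'{e}t}}(A, \mathbb{Z}/n(d+1)) \cong \mathbb{Z}/n,
\]
where the final isomorphism uses $H^{2d}(A_{\bar k}, \mathbb{Z}/n(d+1)) \cong \mathbb{Z}/n(1)$ and local duality. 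Because $A_{\bar k}$ has \'etale cohomological dimension $2d$ and $k$ has cohomological dimension $2$, the target $H^{2d+2}$ is supported entirely in Hochschild--Serre bidegree $(2, 2d)$, so ${\rm fil}_{{\rm HS}}^{3} H^{2d+2} = 0$. Theorem~\ref{Intro-2} with $r=2, s=1$ (the condition on $n$ is vacuous since $(r-1)! = 1$) places $\rho^1_{A,n}(F_1^3)$ in ${\rm fil}_{{\rm HS}}^{3} H^{2d+1}$. By multiplicativity of the Hochschild--Serre filtration, for any $\alpha \in F_1^3$ and $\beta \in H^{1}$ we have $\rho^1_{A,n}(\alpha) \cup \beta \in {\rm fil}_{{\rm HS}}^{3} H^{2d+2} = 0$, so passing to the inverse limit over $n$ yields $\mathrm{rec}_A(\alpha) = 0$.

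For the Jacobian statements, Theorem~\ref{Intro-1} identifies $F_1^2 / F_1^3$ with the Somekawa-type quotient $S_1(k; A, \mathcal{K}_1^{\mathrm{M}})$. Under the reduction hypotheses, the Yamazaki input invoked around Theorem~\ref{div&kernel}(i) implies that $F_1^2 / F_1^3$ is a direct sum of a finite group and a divisible group and that $F_1^\nu / F_1^{\nu+1}$ is divisible for all $\nu \geq 3$. By successive extensions, $F_1^2 / F_1^\nu \otimes \mathbb{Z}[1/2]$ decomposes as $F \oplus D$ with $F$ finite and $D = (F_1^2 / F_1^\nu)_{{\rm div}} \otimes \mathbb{Z}[1/2]$. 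One inclusion $D \subset K_1 / F_1^\nu \otimes \mathbb{Z}[1/2]$ is formal: $F_1^2 / K_1$ embeds via $\mathrm{rec}_A$ into the profinite group $\pi_1^{{\rm ab}}(A)$, which has no nontrivial divisible subgroup. The reverse inclusion amounts to showing that $F$ injects into $\pi_1^{{\rm ab}}(A) \otimes \mathbb{Z}[1/2]$ under $\mathrm{rec}_A$; I would establish this by choosing $n$ coprime to $2$ with $nF = 0$ and applying the mod-$n$ comparison from Theorems~\ref{Intro-1} and~\ref{Intro-2}.

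The main obstacle will be this reverse inclusion: showing that the finite torsion part of $F_1^2 / F_1^\nu \otimes \mathbb{Z}[1/2]$ is detected faithfully by $\mathrm{rec}_A$ requires a careful compatibility between the Gazaki filtration and the Hochschild--Serre filtration at finite levels, combined with the Somekawa-type description of the graded pieces from Theorem~\ref{Intro-1} and Yamazaki's structural analysis of the resulting $K$-groups.
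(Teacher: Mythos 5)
Your outline of the sandwich $F_1^3 \subset K_1 \subset F_1^2$ matches the paper's route ($K_1 \subset F_1^2$ via the norm map and the injectivity of $\mathrm{rec}_k$; $F_1^3 \subset K_1$ via $\rho_{A,n}^1(F_1^3) \subset {\rm fil}_{{\rm HS}}^3 H^{2d+1} = 0$), but there is a hypothesis you cannot discharge as stated: Theorem \ref{Intro-2} assumes the Hochschild--Serre spectral sequence degenerates at $E_2$, and over a $p$-adic field Proposition \ref{degeneration} guarantees this only when $n$ is odd (here $M = \mathrm{cd}(k) = 2$). Since $F_1^3 \subset K_1$ is asserted integrally, you must also kill $\rho_{A,n}^1(F_1^3)$ for even $n$; the paper does this in Remark \ref{remark 2} by re-running the proof of Lemma \ref{hs&cup} for $(s,r)=(1,1)$ and checking that the relevant square still commutes after composing with the projection $E_2^{2,2d-1} \twoheadrightarrow E_\infty^{2,2d-1}$. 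Your parenthetical addresses only the coprimality with $(r-1)!$, not degeneration. (Also, the detour through the cup product into $H^{2d+2}$ is unnecessary: ${\rm fil}_{{\rm HS}}^3 H^{2d+1}$ already vanishes because $\mathrm{cd}(k)=2$.)

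The Jacobian part has two more substantial gaps. First, you attribute the decomposition of $F_1^2/F_1^3$ into (finite) $\oplus$ (divisible) to ``the Yamazaki input invoked around Theorem \ref{div&kernel}(i)''; but that input yields divisibility only for Mackey products with $r+s\geq 3$, $s>0$, whereas $F_1^2/F_1^3$ corresponds to $r=s=1$ (and is in general \emph{not} divisible --- that is the whole point of the finite summand). The actual inputs are Somekawa's isomorphism $F_1^2/F_1^3 \cong K(k;J,\mathbb{G}_{\mathrm{m}}) \cong V(C)$ --- this is where $A=\mathrm{Jac}(C)$ and $C(k)\neq\emptyset$ enter --- together with Saito's class field theory for curves, which gives both the finite-plus-divisible structure of $V(C)$ and the identification of its divisible part with $\mathrm{Ker}(\mathrm{rec}_C)$. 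Second, you explicitly leave open the reverse inclusion $K_1\otimes\mathbb{Z}[1/2]\subset D\otimes\mathbb{Z}[1/2]$, where $D/F_1^3=(F_1^2/F_1^3)_{\rm div}$; the missing ingredient is Spiess's theorem (condition $(*)_1$ of Corollary \ref{evaluation kernel}) that the Somekawa map $s_{n}\colon K(k;J,\mathbb{G}_{\mathrm{m}})/n \to H^2(k, J[n]\otimes\mu_n)$ is injective for such Jacobians. Taking $n_1$ to be the order of the finite quotient $(F_1^2/F_1^3)/(F_1^2/F_1^3)_{\rm div}$, one has $F_1^2/(n_1F_1^2+F_1^3)=F_1^2/D$, and this injectivity makes $\rho_J/n_1$ injective there after inverting $2$, which is exactly what forces $K_1\subset D$. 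Without naming this input the argument does not close.
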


\noindent \textit{Notation and Conventions.}
Throughout this paper, we fix a base field $k$.
For a scheme $X$ over $k$, let $X_{(0)}$ be the set of all closed points in $X$.
For $x \in X$, we denote the residue field by $k(x)$.
For an extension $F/k$ of fields, 
we denote a scheme $X \times_{k} F$ by $X_{F}$ and the set of all $F$-valued points by $X(F)$.
If $K$ is a function field in one variable over $k$ and $v$ is a place of $K/k$, 
then $\mathcal{O}_{v}$ and $k(v)$ denote the valuation ring and the residue field.
\section{Review}
In section \ref{section1.1}, we prepare a Somekawa type $K$-group $S_{r}(k; A, \mathcal{K}_{s}^{\mathrm{M}})$, 
which plays a key role in section \ref{section Gazaki type} 
to define the Gazaki type filtration on the higher Chow group of zero cycles for an abelian variety.
In section \ref{section higher Chow}, we review the cubical definition of the higher Chow group
and the Weil reciprocity for the higher Chow groups shown by Akhtar.
This is a key lemma for Theorem \ref{Psi_{r,s}}. 
In section \ref{section K-Y}, we review a theorem of Kahn and Yamazaki 
that will be used in Corollary \ref{Cor Somekawa type map}.
	\subsection{The group $S_{r}(k; A, \mathcal{K}_{s}^{\mathrm{M}})$} 							\label{section1.1}
		Let $G$ be a commutative group scheme over $k$.
For a sequence $E/F/k$ of fields, we have the restriction map $\mathrm{R}_{E/F} : G(F) \to G(E)$. 
If $E/F$ is finite, then we also have the trace map $\mathrm{Tr}_{E/F} : G(E) \to G(F)$, 
which satisfies
\begin{equation}																					\label{TrRes}
\mathrm{Tr}_{E/F} \circ \mathrm{R}_{E/F} = [E : F].
\end{equation}
The above is similar for the Milnor $K$-theory $K_{*}^{\mathrm{M}}$ of fields.

Let $A$ be an abelian variety over $k$.
For two integers $r,s\geq 0$, set
\begin{equation}																					\label{Tproduct}
T_{r}(k; A, \mathcal{K}_{s}^{\mathrm{M}}) = 
\bigoplus_{F/k ~ \textrm{finite}} A(F)^{\otimes r} \otimes K_{s}^{\mathrm{M}}(F), 
\end{equation}
where $F$ runs through all finite extensions of $k$, and $K_{s}^{\mathrm{M}}(F)$ is the Milnor $K$-theory of $F$.
We denote by 
$$
K_{r}(k; A, \mathcal{K}_{s}^{\mathrm{M}}) = K(k; \underbrace{A, \dots, A}_{r}, \mathcal{K}_{s}^{\mathrm{M}})
$$
the Somekawa type $K$-group which has been studied since \cite{S}.
It is the abelian group
$$
T_{r}(k; A, \mathcal{K}_{s}^{\mathrm{M}}) \big/ R,
$$
where $R$ is the subgroup generated by the following elements \eqref{rel-1}, \eqref{rel-2}.
We abbreviate $T_{r}(k; A, \mathcal{K}_{0}^{\mathrm{M}})$ and $K_{r}(k; A, \mathcal{K}_{0}^{\mathrm{M}})$ 
to $T_{r}(k; A)$ and $K_{r}(k; A)$. 
One has $K_{0}(k; A, \mathcal{K}_{s}^{\mathrm{M}}) \cong K_{s}^{\mathrm{M}}(k)$. 

Set $H_{i} = A$ for $i = 1, \dots, r$ and $H_{r+1} = \mathcal{K}_{s}^{\mathrm{M}}$. 
If $E/F/k$ is a sequence of finite field extensions and we have $h_{i_{0}} \in H_{i_{0}}(E)$ for some $i_{0} \in \{1, \dots, r+1\}$, 
and $h_{i} \in H_{i}(F)$ for all $i \neq i_{0}$, then
\begin{equation} 																							\label{rel-1}
h_{1} \otimes \dots \otimes \mathrm{Tr}_{E/F}(h_{i_{0}}) \otimes \dots \otimes h_{r+1} - 
\mathrm{R}_{E/F}(h_{1}) \otimes \dots \otimes h_{i_{0}} \otimes \dots \otimes \mathrm{R}_{E/F}(h_{r+1}) \in R.
\end{equation}

Let $K$ be a function field in one variable over $k$. 
Let $f_{1}, \dots, f_{r} \in A(K)$ and $g \in K_{s+1}^{\mathrm{M}}(K)$ . 
Then
\begin{equation} 																							\label{rel-2}
\sum_{v} s_{v}(f_{1}) \otimes \dots \otimes s_{v}(f_{r}) \otimes \partial_{v}(g) \in R
\end{equation}
where $v$ runs over all places of $K/k$. 
Here $\partial_{v} : K_{s+1}^{\mathrm{M}}(K) \to K_{s}^{\mathrm{M}}(k(v))$
is the boundery map in the Milnor $K$-theory, 
and $s_{v} : A(K) \to A(k(v))$ is the specialization map defined as the composition 
$A(K) \stackrel{\simeq}{\longleftarrow} A(\mathcal{O}_{v}) \to A(k(v))$ by the properness. 

For a finite field extension $F/k$ and $a_{1}, \dots, a_{r} \in A(F)$, $b \in K_{s}^{\mathrm{M}}(F)$ 
we denote by a symbol $\{a_{1}, \dots, a_{r}, b \}_{F/k}$ the class of 
$a_{1} \otimes \dots \otimes a_{r} \otimes b$ in $K_{r}(k; A, \mathcal{K}_{s}^{\mathrm{M}})$.

Let $\mathfrak{S}_{r}$ be the $r$-th symmetric group. 
An element $\sigma \in \mathfrak{S}_{r}$ acts on a symbol by 
$\sigma \cdot \{a_{1}, \dots, a_{r}, b \}_{F/k} = \{a_{\sigma(1)}, \dots, a_{\sigma(r)}, b \}_{F/k}$. 
This extends linearly to $K_{r}(k; A, \mathcal{K}_{s}^{\mathrm{M}})$. 
We denote the quotient abelian group of the action by
\begin{equation}																				\label{S type 1}
S_{r}(k; A, \mathcal{K}_{s}^{\mathrm{M}}) = K_{r}(k; A, \mathcal{K}_{s}^{\mathrm{M}}) \big/ \mathfrak{S}_{r}.
\end{equation}

Let $k'/k$ be a finite extension of fields.
The trace map of Somekawa type $K$-groups 
\begin{equation}																				\label{S trace}
\mathrm{Tr}_{k'/k} : K_{r}(k'; A_{k'}, \mathcal{K}_{s}^{\mathrm{M}}) \to K_{r}(k; A, \mathcal{K}_{s}^{\mathrm{M}})
\end{equation}
is defined as follows:
For a finite extension $F/k'$ and $a_{1}, \dots, a_{r} \in A_{k'}(F)$, $b \in K_{s}^{\mathrm{M}}(F)$, 
the map $\mathrm{Tr}_{k'/k}$ sends a symbol $\{a_{1}, \dots, a_{r}, b \}_{F/k'}$ 
to a symbol $\{a_{1}, \dots, a_{r}, b \}_{F/k}$.
This induces 
$\mathrm{Tr}_{k'/k} : S_{r}(k'; A_{k'}, \mathcal{K}_{s}^{\mathrm{M}}) \to S_{r,s}(k; A, \mathcal{K}_{s}^{\mathrm{M}})$.

We review an easy result on the Milnor $K$-theory.
Let $K$ be a function field in one variable over $k$. 
Let $g_{1}, \dots, g_{s+1} \in K^{*}$ be such that for every place $v$ of $K/k$ there exists $i(v) \in \{ 1, \dots, s+1 \}$ 
such that $g_{i} \in \mathcal{O}_{v}^{*}$ for all $i \neq i(v)$. 
Then for every place $v$ of $K/k$
\begin{equation}																	\label{boundary-order}
\partial_{v}(\{ g_{1}, \dots, g_{s+1} \}) 
= (-1)^{i(v)-1}\mathrm{ord}_{v} (g_{i(v)})\{ g_{1}(v), \dots, \Hat{g}_{i(v)}, \dots, g_{s+1}(v)  \} \in K_{s}^{\mathrm{M}}(k(v)),
\end{equation}
where $\Hat{g}_{i(v)}$ excludes the $i(v)$-th component and 
$v = \mathrm{ord}_{v} : K^{*} \twoheadrightarrow \mathbb{Z}$ is the normalized discrete valuation.

Let $g_{1}, \dots, g_{s}, h \in K^{*}$ be such that for every place $v$ of $K/k$ there exists $i(v) \in \{ 1, \dots, s \}$ 
such that $g_{i} \in \mathcal{O}_{v}^{*}$ for all $i \neq i(v)$. When $s=0$ this simply means we have $h \in K^{*}$. 
Then for every place $v$ of $K/k$
\begin{equation}																	\label{boundary-tame symbol}
\partial_{v}(\{ h, g_{1}, \dots, g_{s} \}) 
= \begin{cases} \displaystyle
\mathrm{ord}_{v} (h) & \text{if} \ s=0 \\ \displaystyle
\{ g_{1}(v), \dots, T_{v}(g_{i(v)}, h), \dots, g_{s}(v)  \} \in K_{s}^{\mathrm{M}}(k(v)) & \text{if} \ s>0
\end{cases}
\end{equation}
Here $T_{v} : K^{*} \times K^{*} \to k(v)^{*}$ is the tame symbol defined by 
$T_{v}(g,h) = (-1)^{v(g)v(h)} ( g^{v(h)} / h^{v(g)} ) (v)$ for $g,h \in K^{*}$.
	\subsection{Higher Chow groups}														\label{section higher Chow}
Let $X$ be an equi-dimensional scheme of finite type over $k$. 
In \cite{T}, one defines the $n$-cube $\square_{k}^{n}$ to be $(\mathbb{P}_{k}^{1} - \{ 1 \})^{n}$
and introduces the faces of $\square^{n}$, the cubical complex $c^{m}(X, \bullet)$ for an integer $m \geq 0$,
and the subcomplex $c^{m}(X, \bullet)_{\rm{degn}} \subset c^{m}(X, \bullet)$ of degenerate cycles. 
We denote the quotient complex $c^{m}(X, \bullet) / c^{m}(X, \bullet)_{\rm{degn}}$ by $z^{m}(X, \bullet)$, 
and define the \textit{higher Chow group} $\mathrm{CH}^{m}(X, n)$ 
as the $n$-th homology group of the complex $z^{m}(X, \bullet)$.

We focus on the higher Chow group of zero cycles. 
Let $d$ be the dimension of $X$ and $s \geq 0$ an integer. 
Then we obtain 
\begin{equation}																					\label{CH0}
\mathrm{CH}^{d+s}(X, s) = c^{d+s}(X, s) / d_{s+1}c^{d+s}(X, s+1),
\end{equation}
where $d_{s+1} : c^{d+s}(X, s+1) \to c^{d+s}(X, s)$ is the boundary map.
The object $c^{d+s}(X, s)$ is the free abelian group 
generated by all closed points in $X \times (\square^{1} - \{0,\infty \})^{s}$, 
and $c^{d+s}(X, s+1)$ is the free abelian group generated by all integral curves in $X \times \square^{s+1}$ 
which meets the codimension-$1$ faces in finitely many points and which does not meet the codimension-$2$ faces.
Let $C \in c^{d+s}(X, s+1)$ be an integral curve with function field $K = k(C)$. 
Let $p_{i} : X \times \square^{s+1} \to \square^{1}$ be the projection to the $i$-th cube and 
let $g_{i} : C \to \square^{1}$ be the composition 
$C \stackrel{\iota_{C}}{\hookrightarrow} X \times \square^{s+1} \xrightarrow{p_{i}} \square^{1}$. 
Let $\varphi : \Tilde{C} \to C$ be the normalization of $C$ and let $\Tilde{g}_{i}$ be the composition $g_{i} \circ \varphi$. 
We use $(t_{1}, \dots, t_{s+1})$ for the affine coordinate of $\square^{s+1}$ around $(0, \dots, 0)$.
For $\epsilon \in \{ 0, \infty \}$, we denote by $\varphi_{i}^{\epsilon}$ and $\iota_{C, i}^{\epsilon}$ 
the base change of $\varphi$ and $\iota_{C}$ by $t_{i} = \epsilon$ in the following cartesian diagram:
\begin{equation}                                                                                                                                 \label{diagram}
\begin{tikzcd}
\Tilde{g}_{i}^{-1}(\epsilon) \arrow[r, "\varphi_{i}^{\epsilon}"]\arrow[d, hook] &
g_{i}^{-1}(\epsilon) \arrow[r, hook, "\iota_{C, i}^{\epsilon}"]\arrow[d, hook] &
X \times \square^{s} \arrow[r]\arrow[d, hook, "t_{i} = \epsilon"] &
\mathrm{Spec} k \arrow[d, hook, "t = \epsilon"] \\
\Tilde{C} \arrow[r, "\varphi"'] &
C \arrow[r, hook, "\iota_{C}"'] &
X \times \square^{s+1} \arrow[r, "p_{i}"'] &
\square^{1}.
\end{tikzcd}
\end{equation}
Set $\sigma_{i}^{\epsilon} = \iota_{C, i}^{\epsilon} \circ \varphi_{i}^{\epsilon}$.
Given a closed point $w \in \coprod_{i,\epsilon} \Tilde{g}_{i}^{-1}(\epsilon) \subset \Tilde{C}$, 
there is a unique pair $(i(w), \epsilon(w))$ such that $\varphi(w) \in g_{i(w)}^{-1}(\epsilon(w))$
since all cycles $[g_{i}^{-1}(\epsilon)]$ have disjoint suppots on $X\times \square^{s}$ by the face condition. 
The boundary map $d_{s+1}$ is described as follows:
\begin{equation}                                                                                                                       \label{dC-2}
d_{s+1}(C) = 
\sum_{w \in \Tilde{C}} (-1)^{i(w)-1}
\mathrm{ord}_{w}(\Tilde{g}_{i(w)}) [k(w):k(\varphi(w))] \cdot \sigma_{i(w)}^{\epsilon(w)}(w),
\end{equation}
where we consider as $\Tilde{g}_{i} \in K^{*}$.

		We use the following isomorphism repeatedly.
\begin{thm} {\rm(Nesterenko-Suslin, Totaro \cite[Theorem 1]{T})}
Let $s \geq 0$ be an integer. There is a canonical isomorphism
\begin{equation}																						\label{N-S,T}
[\ ]_{k} : K_{s}^{\mathrm{M}}(k) \xrightarrow{\sim} \mathrm{CH}^{s}(k,s).
\end{equation}
\end{thm}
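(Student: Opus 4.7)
The plan is to construct explicit maps in both directions and verify that they are mutually inverse, with the majority of the work falling on the well-definedness of the inverse.

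\textbf{Forward map.} On generators, send a Milnor symbol $\{a_1, \ldots, a_s\}$ to the class of the closed point $(a_1, \ldots, a_s) \in \square^s_k = (\mathbb{P}^1_k - \{1\})^s$, with the convention that the class is $0$ if any $a_i = 1$. The face condition is automatic ($a_i \in k^* \setminus \{1\}$ gives $a_i \notin \{0, 1, \infty\}$), and multilinearity in each entry is built in. The one nontrivial relation is Steinberg: the cycle $[(a, 1-a)]$ must be a boundary in $c^2(k, 2)$. I would exhibit an explicit admissible rational curve $C \subset \square^2_k$ whose boundary, computed via \eqref{dC-2}, equals $[(a, 1-a)]$ up to sign; the case of general $s$ follows by inserting this relation into a consecutive pair of slots via multilinearity.

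\textbf{Inverse map.} For a closed point $x \in \square^s_k$ with residue field $K(x)$ and coordinate values $a_i(x) \in K(x)^*$ (the face condition forcing $a_i(x) \neq 0, \infty$), set
$$
\Psi(x) := N_{K(x)/k}\bigl(\{a_1(x), \ldots, a_s(x)\}\bigr) \in K_s^{\mathrm{M}}(k)
$$
and extend $\mathbb{Z}$-linearly to $c^s(k, s)$. To descend to $\mathrm{CH}^s(k, s)$ one must show $\Psi \circ d_{s+1} = 0$. Given an admissible integral curve $C \subset \square^{s+1}_k$ with normalization $\widetilde{C}$ and function field $K = k(C)$, unpacking \eqref{dC-2} and comparing with \eqref{boundary-tame symbol}, the contribution $\Psi(d_{s+1}(C))$ rewrites as
$$
\sum_{v} N_{k(v)/k}\bigl(\partial_v \{\widetilde{g}_1, \ldots, \widetilde{g}_{s+1}\}\bigr),
$$
the sum ranging over all places of $K/k$. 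This vanishes by Weil reciprocity for Milnor $K$-theory (Bass--Tate, Kato), which I would invoke as a black box.

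\textbf{Bijectivity.} The composite $\Psi \circ [\ ]_k$ is the identity on generating symbols $\{a_1, \ldots, a_s\}$ with $a_i \in k^*$, since the residue field of the corresponding closed point is $k$ itself; hence $[\ ]_k$ is split injective. For surjectivity I would argue by induction on the degree $[K(x):k]$ of the residue field of a closed point $x \in \square^s_k$: using an analogue of the Bass--Tate presentation of $K_s^{\mathrm{M}}(k(t))$, one constructs an admissible curve in $\square^{s+1}$ whose boundary expresses $[x]$ as a $\mathbb{Z}$-combination of cycles supported on points of strictly smaller residue degree plus a cycle visibly in the image of $[\ ]_k$. Iterating reduces everything to the trivial case $[K(x):k] = 1$.

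\textbf{Main obstacle.} The delicate step is the well-definedness of $\Psi$: the signs $(-1)^{i(w)-1}$ and the multiplicities $\mathrm{ord}_w(\widetilde{g}_{i(w)})$ appearing in \eqref{dC-2} must be matched exactly against the signs and the tame-symbol convention of \eqref{boundary-tame symbol} before Weil reciprocity can be applied cleanly. Once this sign-and-order bookkeeping is in place, both the Steinberg verification for $[\ ]_k$ and the inductive surjectivity argument are essentially combinatorial and follow standard moves.
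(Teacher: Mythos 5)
First, a point of reference: the paper does not prove this statement at all --- it is imported verbatim from Totaro \cite[Theorem 1]{T} (and Nesterenko--Suslin), so there is no internal proof to compare against. Your sketch is, in outline, a faithful reconstruction of Totaro's original argument: the inverse map $x \mapsto N_{k(x)/k}\{a_1(x),\dots,a_s(x)\}$, its well-definedness via Weil reciprocity for the residue maps of $K/k$, and surjectivity of $[\ ]_{k}$ by induction on the degree of the residue field. The sign-and-order bookkeeping you single out is indeed where the work lies, and it is precisely the comparison between \eqref{dC-2} and \eqref{boundary-order} that the paper itself carries out (for a different purpose) in the proof of Proposition \ref{prop phi}.

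There is, however, one genuine error in your forward direction. You assert that ``multilinearity in each entry is built in.'' It is not. The group $c^{s}(k,s)$ is \emph{free} on the closed points of $(\square^{1}-\{0,\infty\})^{s}$, so the cycle $[(aa', a_2,\dots,a_s)]$ is a single generator and bears no a priori relation to $[(a, a_2,\dots,a_s)] + [(a', a_2,\dots,a_s)]$ in $\mathrm{CH}^{s}(k,s)$. Since $K_{s}^{\mathrm{M}}(k)$ is presented by multilinearity \emph{and} the Steinberg relation, descending your forward map requires exhibiting, for each multilinearity relation, an explicit admissible curve in $\square^{s+1}$ whose boundary under \eqref{dC-2} equals $[(aa', a_2,\dots,a_s)] - [(a, a_2,\dots,a_s)] - [(a', a_2,\dots,a_s)]$ --- a verification of the same nature and difficulty as the Steinberg one, and not one that can be waved away. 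A second, smaller omission: in the well-definedness of $\Psi$, the boundary \eqref{dC-2} sums only over points of $\widetilde{C}$, whereas Weil reciprocity sums over \emph{all} places of $K/k$; you must check that the places of the smooth compactification $\mathbb{P}(\widetilde{C})$ not lying on $\widetilde{C}$ contribute zero residue. This uses that at every such place some coordinate function $\widetilde{g}_{i}$ takes the value $1$ (Akhtar's Lemma 6.6, quoted in the paper's proof of Proposition \ref{prop phi}), which kills the corresponding residue symbol.
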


We review the Weil reciprocity for the higher Chow groups. 
Refer to \cite{A} for more general statement.
%
%
\begin{thm} \label{1.4.} {\rm(\cite{A}Theorem 4.5, Lemma 5.4, 6.3, 6.4, 6.5)}
Let $s \geq 0$ be an integer. 
Let $C$ be a proper smooth curve over $k$ with function field $K = k(C)$. 
For a closed point $P \in X_{K}$ and $g \in K_{s+1}^{\mathrm{M}}(K)$,  
it holds that
$$
\sum_{v \in C_{(0)}} \mathrm{Tr}_{k(v)/k} \left( 
s_{v}([P]) \times [\partial_{v} (g)]_{k(v)} \right) = 0 
\in \mathrm{CH}^{d+s}(X, s),
$$
where $s_{v} : \mathrm{CH}_{0}(X_{K}) \to \mathrm{CH}_{0}(X_{k(v)})$ 
is the specialization map for Chow groups in {\rm \cite[Section 20.3]{F}}, 
and $\times$ is the exterior product of higher Chow groups, 
and $\mathrm{Tr}_{k(v)/k} : \mathrm{CH}^{d+s}(X_{k(v)}, s) \to \mathrm{CH}^{d+s}(X, s)$ is the proper push-forward.
\end{thm}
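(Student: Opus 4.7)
The plan is to reduce to a single Milnor symbol and then exhibit an explicit admissible curve in $X \times \square^{s+1}$ whose cubical boundary, under the formula \eqref{dC-2}, realises the claimed identity in $\mathrm{CH}^{d+s}(X,s)$.

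First, by multilinearity of both sides in $g$ (using that $\partial_{v}$ is a derivation on $K_{*}^{\mathrm{M}}$ and that the exterior product is bilinear), and by checking the Steinberg relation $\{h, 1-h\}$ separately, reduce to the case where $g = \{g_{1}, \dots, g_{s+1}\}$ is a single Milnor symbol with $g_{i} \in K^{*}$.

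Next, let $\overline{P} \subset X \times_{k} C$ be the Zariski closure of the closed point $P \in X_{K}$; it is an integral curve equipped with a finite dominant morphism $\pi \colon \overline{P} \to C$, and its generic fibre over $C$ recovers $P$. Pulling back the rational functions $g_{1}, \dots, g_{s+1}$ along $\pi$ yields a rational map $\overline{P} \dashrightarrow \square^{s+1}$ which, together with the projection $\overline{P} \to X$, produces an integral curve $\Gamma \subset X \times \square^{s+1}$. The candidate cycle is $[\Gamma] \in c^{d+s}(X, s+1)$. The main technical obstacle lies here: $[\Gamma]$ must meet each codimension-one face properly and avoid all codimension-two faces. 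Several $g_{i}$ may have common zeros or poles along the image of $\overline{P}$ in $C$, so one must either modify the input by a finite sequence of blow-ups in $X$ (or by a suitable alteration of $\overline{P}$) to achieve admissibility, or invoke Akhtar's moving lemma to replace $[\Gamma]$ by an equivalent admissible cycle without changing its class modulo the image of $d_{s+2}$. This is exactly the content of the preparatory lemmas \cite[Lemma 5.4, 6.3, 6.4, 6.5]{A} cited in the statement.

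Once admissibility is established, apply the boundary formula \eqref{dC-2} to $[\Gamma]$. The points $w$ of the normalisation $\widetilde{\Gamma}$ contributing to $d_{s+1}([\Gamma])$ lie above the finitely many places $v \in C_{(0)}$ at which some $g_{i}$ has non-trivial valuation, and the preimages of $v$ in $\widetilde{\Gamma}$ correspond to the closed points appearing in the specialisation $s_{v}([P]) \in \mathrm{CH}_{0}(X_{k(v)})$. For each such $w$ the sign $(-1)^{i(w)-1}$, the multiplicity $\mathrm{ord}_{w}(\widetilde{g}_{i(w)}) \cdot [k(w):k(\varphi(w))]$, and the image $\sigma_{i(w)}^{\epsilon(w)}(w) \in X \times \square^{s}$ combine, by the explicit formula \eqref{boundary-order} for $\partial_{v}$, into exactly the summand $\mathrm{Tr}_{k(v)/k}\bigl(s_{v}([P]) \times [\partial_{v}(g)]_{k(v)}\bigr)$, with the trace accounting for the passage from $k(w)$ down to $k(v)$ and then to $k$. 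Since $d_{s+1}([\Gamma])$ is by definition zero in $\mathrm{CH}^{d+s}(X,s)$, summing over all $v$ yields the desired vanishing.
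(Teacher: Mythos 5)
First, a point of comparison: the paper does not actually prove this statement --- it is imported wholesale from Akhtar \cite{A} (Theorem 4.5 together with Lemmas 5.4 and 6.3--6.5), so your proposal is being measured against a citation rather than an argument. That said, your overall strategy --- reduce to a single Milnor symbol, spread $P$ out to the integral curve $\overline{P} \subset X \times_{k} C$, map it to $X \times \square^{s+1}$ via the projection to $X$ and the pulled-back functions $g_{1}, \dots, g_{s+1}$, and read off the identity from the boundary formula \eqref{dC-2} --- is the expected shape of the argument, and it is consistent with how the paper uses the theorem elsewhere (the proof of Proposition \ref{prop phi} runs essentially the same computation in reverse).

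There is, however, a genuine gap in how you propose to secure admissibility, and that is the crux of the proof. The failure of $\Gamma$ to satisfy the face conditions is caused entirely by the functions $g_{1}, \dots, g_{s+1}$ on $C$: a codimension-two face is met exactly when two of the $g_{i}$ simultaneously acquire a zero or pole at some place $v$, which is also precisely where the formula \eqref{boundary-order} for $\partial_{v}$ ceases to apply. Blowing up $X$ or altering $\overline{P}$ changes nothing here --- the obstruction lives on $C$ and is insensitive to the geometry of $X$ and of $\overline{P}$ over $C$. What is actually required (and what Akhtar's cited lemmas provide) is a reduction on the symbol side: modulo symbols for which the identity is already known, every class of $K_{s+1}^{\mathrm{M}}(K)$ is represented by symbols whose entries have suitably disjoint divisors, so that at each $v$ at most one entry has nonzero valuation. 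Relatedly, you never address the points where some $g_{i} = 1$: since $\square^{1} = \mathbb{P}^{1} - \{1\}$, these are exactly the points where $\overline{P}$ leaves $X \times \square^{s+1}$, and one must check that the corresponding places contribute $0$ to the left-hand side (they do, because a Milnor symbol containing $1$ vanishes; this is the role played by \cite[Lemma 6.6]{A} in the proof of Proposition \ref{prop phi}, and the analogous verification is needed here). Finally, the multiplicity matching in your last step --- that $\mathrm{ord}_{w}(\tilde{g}_{i(w)})\,[k(w):k(\varphi(w))]$ on the normalization of $\Gamma$ reproduces the coefficients of Fulton's specialization $s_{v}([P])$ times $\mathrm{ord}_{v}(g_{i(v)})$ --- is asserted rather than proved, and needs care when $\overline{P} \to \Gamma$ is not birational: one should push forward the fundamental cycle of $\overline{P}$ rather than work with $[\Gamma]$ taken with multiplicity one.
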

	\subsection{A result of Kahn and Yamazaki}									\label{section K-Y}
		Let $A$ be an abelian variety over $k$. 
\begin{defn}																				\label{SomekawaK}
For $r, s \geq 0$, we define the group
$$
K_{r,s}(k; A, \mathbb{G}_{\mathrm{m}}) 
= \left[ \bigoplus_{F/k} A(F)^{\otimes r} \otimes (F^{*})^{\otimes s} \right] \Big/ R'
$$
where $F$ runs through all finite extensions of $k$ and $R'$ is the subgroup generated by the following elements \eqref{rel-3} and \eqref{rel-4}: 

Set $H_{i} = A$ for $i = 1, \dots, r$ and $H_{i} = \mathbb{G}_{\mathrm{m}}$ for $i = r+1, \dots, r+s$. 
If $E/F/k$ is a sequence of finite field extensions and we have $h_{i_{0}} \in H_{i_{0}}(E)$ for some $i_{0} \in \{1, \dots, r+s\}$, 
and $h_{i} \in H_{i}(F)$ for all $i \neq i_{0}$, then
\begin{equation} 																						\label{rel-3}
h_{1} \otimes \dots \otimes \mathrm{Tr}_{E/F}(h_{i_{0}}) \otimes \dots \otimes h_{r+s} - 
\mathrm{R}_{E/F}(h_{1}) \otimes \dots \otimes h_{i_{0}} \otimes \dots \otimes \mathrm{R}_{E/F}(h_{r+s}) \in R',
\end{equation}
where $\mathrm{R}_{E/F}$ is the restriction map and $\mathrm{Tr}_{E/F}$ is the trace map. 

Let $K$ be a function field in one variable over $k$. Let $f_{1}, \dots, f_{r} \in A(K)$ and $g_{1}, \dots g_{s}, h \in K^{*}$ 
such that for every place $v$ of $K/k$ there exists $i(v) \in \{ 1, \dots, s \}$ 
such that $g_{i} \in \mathcal{O}_{v}^{*}$ for all $i \neq i(v)$. Then
\begin{equation} 																						\label{rel-4}
\begin{aligned}
&\ \ \ \ \ \ \ \ \ \ \ \ \ \ \ \ \ \ \ \ \ \ 
\sum_{v} v(h) \cdot s_{v}(f_{1}) \otimes \dots \otimes s_{v}(f_{r}) \in R' \ &\text{if} \ s = 0,\\
&\sum_{v} s_{v}(f_{1}) \otimes \dots \otimes s_{v}(f_{r}) \otimes 
g_{1}(v) \otimes \dots \otimes T_{v}(g_{i(v)}, h) \otimes \dots \otimes g_{s}(v) \in R' \ &\text{if} \ s > 0,
\end{aligned}
\end{equation}
where $v$ runs over all places of $K/k$. 
Here $s_{v}$ and $T_{v}$ are from \eqref{rel-2} and \eqref{boundary-tame symbol}. 

As with \eqref{S type 1}, define 
\begin{equation}																						\label{S type 2}
S_{r,s}(k; A, \mathbb{G}_{\mathrm{m}}) = K_{r,s}(k; A, \mathbb{G}_{\mathrm{m}}) / \mathfrak{S}_{r}.
\end{equation}
\end{defn}

From a relation \eqref{boundary-tame symbol}, there exists a natural surjection
\begin{equation}																	\label{two somekawa}
K_{r,s}(k; A, \mathbb{G}_{\mathrm{m}}) \twoheadrightarrow K_{r}(k; A, \mathcal{K}_{s}^{{\rm M}}).
\end{equation}
When $s=0,1$, these two groups agree by definition.
When $r=0$, this is an isomorphism by \cite[Theorem 1.4]{S}.

\begin{thm} \label{KY} {\rm \cite[11.14. Theorem]{KY}}
If $k$ is perfect, the above morphism \eqref{two somekawa} is an isomorphism.
\end{thm}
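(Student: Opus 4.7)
The plan is to prove injectivity of the map \eqref{two somekawa}, since surjectivity is immediate from the fact that $K_s^{\mathrm{M}}(F)$ is generated by symbols $\{g_1,\dots,g_s\}$ with $g_i \in F^*$. The strategy is to situate both sides within a common universal framework. By Somekawa's theorem \cite[Theorem 1.4]{S}, there is a canonical isomorphism $K_s^{\mathrm{M}}(F) \cong K_0(F;\mathbb{G}_{\mathrm{m}},\dots,\mathbb{G}_{\mathrm{m}})$ with $s$ copies of $\mathbb{G}_{\mathrm{m}}$, so the right-hand side of \eqref{two somekawa} can be viewed as a nested Somekawa construction. The heart of the argument is an associativity result that unpacks this nesting, yielding a canonical identification
$$
K_r(k; A, \mathcal{K}_s^{\mathrm{M}}) \cong K\bigl(k; \underbrace{A,\dots,A}_{r}, \underbrace{\mathbb{G}_{\mathrm{m}},\dots,\mathbb{G}_{\mathrm{m}}}_{s}\bigr),
$$
where the right-hand side is a single Somekawa-type $K$-group with all $r+s$ factors on equal footing.

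The concrete verification splits into two parts. First, one checks that relations \eqref{rel-3} and \eqref{rel-4} defining $K_{r,s}(k; A, \mathbb{G}_{\mathrm{m}})$ correspond, under the identification \eqref{boundary-tame symbol} of Milnor boundaries with tame symbols, to the relations defining the symmetric form above; this gives a well-defined map in both directions. Second, one must derive the Steinberg relation $\cdots\otimes g\otimes(1-g)\otimes\cdots \in R'$, which is built into $K_s^{\mathrm{M}}$ but only implicit in $K_{r,s}$. This is the classical Suslin-type computation: working with $K = F(t)$, taking constant sections $f_i = a_i \in A(K)$, and choosing $g_1,\dots,g_s,h \in K^{*}$ so that the unit condition at each closed point of $\mathbb{P}^1_F$ kills all but one term of the reciprocity sum \eqref{rel-4}, one extracts a single nontrivial contribution realizing the Steinberg identity.

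The main obstacle is establishing the associativity/symmetric-monoidal compatibility in a functorially clean way. Relation \eqref{rel-4} treats the multiplicative factors asymmetrically (one privileged auxiliary function $h$), while the target presentation needs all $r+s$ factors to be symmetric. Bridging this gap is precisely the purpose of the reciprocity functor machinery of \cite{KY}: one establishes that Mackey functors on finite extensions of $k$ equipped with a Weil-reciprocity axiom form a symmetric monoidal category, and that the tensor product at $(A,\dots,A,\mathbb{G}_{\mathrm{m}},\dots,\mathbb{G}_{\mathrm{m}})$ is computed by either presentation. The hypothesis that $k$ is perfect is used to ensure that residue fields arising from specialization and the tame symbol behave uniformly under transfer, which is needed to identify the two systems of relations globally rather than just termwise.
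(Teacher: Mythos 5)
First, note that the paper does not prove this statement: it is quoted from \cite[11.14. Theorem]{KY}, and the proof indicated there runs through Voevodsky's category $\mathbf{DM}_{-}^{\text{eff}}$ --- both sides of \eqref{two somekawa} are identified with $\mathrm{Hom}_{\mathbf{DM}_{-}^{\text{eff}}}(\mathbb{Z}, A^{\otimes r}\otimes -)$ evaluated at $\mathbb{G}_{\mathrm{m}}^{\otimes s}$ and at $\mathcal{K}_{s}^{\mathrm{M}}$ respectively, and the Suslin--Voevodsky isomorphism $\mathbb{G}_{\mathrm{m}}^{\otimes s}\xrightarrow{\sim}\mathcal{K}_{s}^{\mathrm{M}}$ in $\mathbf{HI}_{\mathrm{Nis}}$ finishes the argument. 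Perfectness of $k$ is what makes Voevodsky's theory of homotopy invariant sheaves with transfers available; it is not about ``residue fields behaving uniformly under transfer'' as you suggest. Your proposal attempts an elementary proof by comparing presentations, which would be a genuinely different (and stronger) route, but it has real gaps.

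The central gap: injectivity of \eqref{two somekawa} amounts to constructing a well-defined inverse, i.e.\ showing that every defining relation of $K_r(k;A,\mathcal{K}_s^{\mathrm{M}})$ already holds in $K_{r,s}(k;A,\mathbb{G}_{\mathrm{m}})$. Your Suslin-type computation plausibly handles the Steinberg relation in the last slot, but you never address the reciprocity relation \eqref{rel-2} for an \emph{arbitrary} $g\in K_{s+1}^{\mathrm{M}}(K)$: relation \eqref{rel-4} only yields reciprocity for symbols $\{h,g_1,\dots,g_s\}$ satisfying the strong general-position hypothesis that at each place at most one of $g_1,\dots,g_s$ fails to be a unit, and reducing arbitrary symbols to such ones (a moving argument compatible with all the abelian-variety slots and with $\partial_v$) is the hard content and is nowhere supplied. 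Likewise, the ``associativity'' identification $K_r(k;A,\mathcal{K}_s^{\mathrm{M}})\cong K(k;A,\dots,A,\mathbb{G}_{\mathrm{m}},\dots,\mathbb{G}_{\mathrm{m}})$ that you place at the heart of the argument is essentially a restatement of the theorem rather than an available lemma, and the symmetric-monoidal ``reciprocity functor'' framework you invoke is asserted, not constructed. A sanity check: Somekawa's $r=0$ case \cite[Theorem 1.4]{S} holds over any field, so if your presentation-level argument were complete it would prove the theorem with no perfectness hypothesis; the fact that \cite{KY} assume perfectness (precisely to run the motivic machinery) strongly suggests that no such elementary proof is currently known.
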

Refer \cite{KY} for the proof. 
They used Voevodsky's triangulated category $\mathbf{DM}_{-}^{\text{eff}}$ of effective motivic complexes 
and constructed following horizontal isomorphisms:
\begin{center}
\begin{tikzcd}
K_{r,s}(k; A, \mathbb{G}_{\mathrm{m}}) \arrow[d, twoheadrightarrow]\arrow[r, "\sim"]
&\mathrm{Hom}_{\mathbf{DM}_{-}^{\text{eff}}} \left( \mathbb{Z}, \ 
A^{\otimes r} \otimes \mathbb{G}_{\mathrm{m}}^{\otimes s} \right) \arrow{d}[sloped,above]{\sim} \\
K_{r}(k; A, \mathcal{K}_{s}^{{\rm M}}) \arrow[r, "\sim"]
&\mathrm{Hom}_{\mathbf{DM}_{-}^{\text{eff}}} \left( \mathbb{Z}, \ 
A^{\otimes r} \otimes \mathcal{K}_{s}^{{\rm M}} \right)
\end{tikzcd}
\end{center}
where the tensor products on the right hand side are in the abelian category $\mathbf{HI}_{\mathrm{Nis}}$ 
of homotopy invariant Nisnevich sheaves with transfers. 
We have an isomorphism
$\mathbb{G}_{\mathrm{m}}^{\otimes s} \xrightarrow{\sim} \mathcal{K}_{s}^{{\rm M}}$ 
as a direct consequence of Suslin-Voevodsky's theorem \cite[Theorem 3.4]{SV} (see also \cite[1.3]{KY}), 
which induces the right vertical isomorphism. 
\section{Gazaki type filtration}													\label{section Gazaki type}
Throughout the rest of this paper, let $k$ be a field and $A$ an abelian variety over $k$ of dimension $d$.
	\subsection{The homomorphism $\Phi_{r,s}'$}
		Recall the description \eqref{CH0}. 
For a closed point $x$ in $A\times_{k} (\square_{k}^{1} - \{0, \infty \})^{s}$, 
we also denote the closed immersion by $x : \mathrm{Spec} \ k(x) \hookrightarrow A\times_{k} \mathbb{G}_{\mathrm{m}}^{s}$.
Denote the projection to $A$ and the $i$-th component by 
$p_{A} : A \times \mathbb{G}_{\mathrm{m}}^{s} \to A$ and 
$p_{i} : A \times \mathbb{G}_{\mathrm{m}}^{s} \to \mathbb{G}_{\mathrm{m}}$ for $1 \leq i \leq s$.

\begin{prop}																					\label{prop phi}
For any $r,s \geq 0$
the homomorphism
\begin{align*}
\phi_{r,s}^{k} : c^{d+s}(A, s) &\to K_{r}(k; A, \mathcal{K}_{s}^{\mathrm{M}}) \\
[x] &\mapsto \{ \underbrace{p_{A} \circ x, \dots, p_{A} \circ x}_{r \, \text{copies}}, 
\{ p_{1} \circ x, \dots, p_{s} \circ x \} \}_{k(x)/k},
\end{align*}
where $x$ is a closed point of $A\times_{k} (\square_{k}^{1} - \{0, \infty \})^{s}$, 
induces a map $\Phi_{r,s}^{k} : \mathrm{CH}^{d+s}(A, s) \to K_{r}(k; A, \mathcal{K}_{s}^{\mathrm{M}})$.
\end{prop}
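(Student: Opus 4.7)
The plan is to show that $\phi_{r,s}^{k}$ vanishes on every boundary $d_{s+1}(C)$, so that it descends through the quotient \eqref{CH0}. Fix an integral curve $C \in c^{d+s}(A, s+1)$ and adopt the notation of diagram \eqref{diagram}. I would set $K := k(\tilde{C}) = k(C)$, $f := p_{A} \circ \iota_{C} \in A(K)$, and $g := \{\tilde{g}_{1}, \ldots, \tilde{g}_{s+1}\} \in K_{s+1}^{\mathrm{M}}(K)$. Since the places of $K/k$ are in bijection with the closed points of $\tilde{C}$, applying the Somekawa-type relation \eqref{rel-2} with $f_{1} = \cdots = f_{r} = f$ and the above $g$ yields
$$
\sum_{w \in \tilde{C}_{(0)}} \{s_{w}(f), \ldots, s_{w}(f), \partial_{w}(g)\}_{k(w)/k} = 0 \text{ in } K_{r}(k; A, \mathcal{K}_{s}^{\mathrm{M}}).
$$

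Next I would unfold each $\partial_{w}(g)$ using the tame formula \eqref{boundary-order}. Because $C$ misses all codimension-two faces, at most one $\tilde{g}_{i}$ fails to be a unit at any given $w$, so $\partial_{w}(g) = 0$ unless $w$ lies in some $\tilde{g}_{i(w)}^{-1}(\epsilon(w))$, in which case
$$
\partial_{w}(g) = (-1)^{i(w)-1} \mathrm{ord}_{w}(\tilde{g}_{i(w)}) \cdot \{\tilde{g}_{j}(w) : j \neq i(w)\} \in K_{s}^{\mathrm{M}}(k(w)).
$$
Thus the surviving terms in the Somekawa identity are indexed by exactly the same set as the cycle-theoretic boundary formula \eqref{dC-2}, with matching signs and valuation multiplicities.

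The central manipulation is to migrate each surviving symbol from level $k(w)$ down to level $k(\varphi(w))$. Setting $P_{w} := \sigma_{i(w)}^{\epsilon(w)}(w) \in A \times \square^{s}$, this closed point has residue field $k(\varphi(w))$ because $\sigma_{i}^{\epsilon}$ factors through the closed immersion $\iota_{C,i}^{\epsilon}$; moreover, specialization at the place $w$ factors as evaluation at $\varphi(w)$ followed by base change, so
$$
s_{w}(f) = \mathrm{R}_{k(w)/k(\varphi(w))}(p_{A}(P_{w})), \qquad \tilde{g}_{j}(w) = \mathrm{R}_{k(w)/k(\varphi(w))}(p_{j}(P_{w})) \quad (j \neq i(w)).
$$
Applying the projection formula \eqref{rel-1} once with $i_{0} = 1$ (the remaining slots already being at level $k(\varphi(w))$) and invoking \eqref{TrRes} to collapse $\mathrm{Tr} \circ \mathrm{R}$ into multiplication by $[k(w):k(\varphi(w))]$, I obtain
$$
\{s_{w}(f), \ldots, s_{w}(f), \partial_{w}(g)\}_{k(w)/k} = [k(w):k(\varphi(w))] \cdot (-1)^{i(w)-1} \mathrm{ord}_{w}(\tilde{g}_{i(w)}) \cdot \phi_{r,s}^{k}(P_{w}).
$$
Summing over $w$ and comparing with \eqref{dC-2} gives $\phi_{r,s}^{k}(d_{s+1}(C)) = 0$, as desired.

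The main obstacle is the bookkeeping of the two residue fields: $k(w)$ naturally carries the Somekawa relations at a place of $K$, while $k(\varphi(w))$ is the correct residue field at the cycle-theoretic boundary point. The extra factor $[k(w):k(\varphi(w))]$ present in Totaro's formula \eqref{dC-2} is precisely what a single application of the projection formula produces on the Somekawa side, so once the two residue fields are correctly aligned the two identities match term by term.
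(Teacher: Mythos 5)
Your overall strategy is the same as the paper's: apply the Somekawa relation \eqref{rel-2} with $f_{1}=\cdots=f_{r}$ equal to the generic point of $C$ viewed in $A(K)$ and $g=\{\tilde g_{1},\dots,\tilde g_{s+1}\}$, then match the surviving terms against Totaro's boundary formula \eqref{dC-2} using \eqref{boundary-order}, \eqref{rel-1} and \eqref{TrRes}. The matching itself — the alignment of $k(w)$ with $k(\varphi(w))$, the identification $s_{w}(f)=\mathrm{R}_{k(w)/k(\varphi(w))}(p_{A}(P_{w}))$, and the appearance of the degree factor $[k(w):k(\varphi(w))]$ from a single application of the projection formula — is carried out correctly and agrees with what the paper does.

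The gap is in your very first step: the places of $K/k$ are \emph{not} in bijection with the closed points of $\tilde C$. They correspond to the closed points of the smooth compactification $\mathbb{P}(\tilde C)$, and $\tilde C$ is essentially never proper here: $C$ lives in $A\times\square^{s+1}$ with $\square^{1}=\mathbb{P}^{1}-\{1\}$, so whenever some $g_{i}$ is non-constant, $C$ (hence $\tilde C$) cannot be complete. Consequently \eqref{rel-2} asserts the vanishing of a sum over $\mathbb{P}(\tilde C)_{(0)}$, not over $\tilde C_{(0)}$, and you must show separately that the terms at the points $w\in\mathbb{P}(\tilde C)-\tilde C$ contribute nothing. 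This is not automatic: the ``at most one $\tilde g_{i}$ fails to be a unit'' property you invoke comes from the codimension-two face condition and is only available on $\tilde C$; at a point at infinity several $\tilde g_{i}$ may simultaneously have zeros or poles. The paper closes this by citing Akhtar's Lemma 6.6: for every $w\in\mathbb{P}(\tilde C)-\tilde C$ there is an index $i(w)$ with $\tilde g_{i(w)}(w)=1$ (using properness of $A$ and the admissibility of $C$), and together with Akhtar's Lemmas 6.3--6.5 this kills $\partial_{w}\{\tilde g_{1},\dots,\tilde g_{s+1}\}$ at those points; note also that $s_{w}(f)$ is still defined at such $w$ because $A$ is proper, so the compactified sum is a legitimate instance of \eqref{rel-2}. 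Your argument becomes complete once this analysis at infinity is added.
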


When there is no confusion, we omit $k$. 
For a map composed with the natural projection to $S_{r}(k; A, \mathcal{K}_{s}^{\mathrm{M}})$, 
we use $\Phi_{r,s}' : \mathrm{CH}^{d+s}(A, s) \to S_{r}(k; A, \mathcal{K}_{s}^{\mathrm{M}})$.

\begin{proof}
Let $C \in c^{d+s}(A, s+1)$ be an integral curve with function field $K = k(C)$, 
let $\varphi : \Tilde{C} \to C$ be the normalization of $C$.
We denote by $q_{A} : A \times \square^{s+1} \to A$ the projection to $A$ 
and by $q_{i} : A \times \square^{s+1} \to \square^{1}$ the projection to the $i$-th cube for $1 \leq i \leq s+1$. 
Let $f : C \to A$ be the composition $C \hookrightarrow A \times \square^{s+1} \xrightarrow{q_{A}} A$ 
and let $g_{i} : C \to \square^{1}$ be the composition 
$C \hookrightarrow A \times \square^{s+1} \xrightarrow{q_{i}} \square^{1}$. 
Set $\Tilde{f} = f \circ \varphi$ and $\Tilde{g}_{i} = g_{i} \circ \varphi$.

Given a closed point $w \in \Tilde{C}$, 
there is a unique pair $(i(w), \epsilon(w))$ such that $\varphi(w) \in g_{i(w)}^{-1}(\epsilon(w))$
if and only if $\mathrm{ord}_{w}(g_{i}) \neq 0$ for some $i$ 
by the observation at \eqref{dC-2}.
If $\varphi(w) \in g_{i(w)}^{-1}(\epsilon(w))$, we obtain the cartesian diagram:
$$
\begin{tikzcd}
	\mathrm{Spec} \ k(\varphi(w)) 
	\arrow[from=1-1, to=1-3, bend left=20, hook, "\sigma_{i(w)}^{\epsilon(w)}(w)"]
	\arrow[r,hook]\arrow[d,equal]&
	g_{i(w)}^{-1}(\epsilon(w)) \arrow[r,hook]\arrow[d,hook]&
	A \times \square^{s} \arrow[r,"p_{1}\times \dots \times p_{s}"]\arrow[d,hook]&
	[2cm]\square^{s} \arrow[d, hook, "t_{i(w)} = \epsilon(w)"] \\
	\mathrm{Spec} \ k(\varphi(w)) \arrow[r,hook,"\varphi(w)"']&
	C \arrow[r,hook]&
	A \times \square^{s+1} \arrow[r, "q_{1}\times \dots \times q_{s+1}"']&
	\square^{s+1}
\end{tikzcd}
$$
where $\sigma_{i}^{\epsilon}$ is a map defined at \eqref{diagram}.
By the above diagram, we get 
$$
\phi_{r,s}(\sigma_{i(w)}^{\epsilon(w)}(w)) = 
\{f \circ \varphi(w), \dots, f \circ \varphi(w), 
\{ g_{1}(\varphi(w)), \dots, \Hat{g}_{i(w)}, \dots, g_{s+1}(\varphi(w)) \} \}_{k(\varphi(w))/k}
$$
where $\Hat{g}_{i(w)}$ means the exclusion of the $i(w)$-th component, 
and we consider as $f \circ \varphi(w) \in A(k(\varphi(w)))$ and 
$g_{i} \in \mathcal{O}_{\varphi(w)}^{*} \subset K$ for $i \neq i(w)$. 
By \eqref{dC-2}, we have 
\begin{fleqn}[0pt]
\begin{align*}
&\phi_{r,s} (d_{s+1}(C)) \\
&\scalebox{0.9}[1]{$\displaystyle
= \sum_{w \in \Tilde{C}} (-1)^{i(w)-1}\mathrm{ord}_{w}(\Tilde{g}_{i(w)}) 
\left\{ [k(w):k(\varphi(w))] f \circ \varphi(w), \dots, f \circ \varphi(w), 
\{ g_{1}(\varphi(w)), \dots, \Hat{g}_{i(w)}, \dots, g_{s+1}(\varphi(w))  \} \right\}_{k(\varphi(w))/k} 
$}
\end{align*}
\end{fleqn}
We write $\mathrm{R} = \mathrm{R}_{k(w)/k(\varphi(w))}$. 
By \eqref{TrRes}, \eqref{rel-1} and \eqref{boundary-order}, the above is equal to 
\begin{fleqn}[0pt]
\begin{align*}
&= \sum_{w \in \Tilde{C}} (-1)^{i(w)-1}\mathrm{ord}_{w}(\Tilde{g}_{i(w)}) 
\{ \mathrm{R}(f \circ \varphi(w)), \dots, \mathrm{R}(f \circ \varphi(w)), 
\{ \Tilde{g}_{1}(w), \dots, \Hat{\Tilde{g}}_{i(w)}, \dots, \Tilde{g}_{s+1}(w)  \} \}_{k(w)/k} \\
&= \sum_{w \in \Tilde{C}} 
\{ \mathrm{R}(f \circ \varphi(w)), \dots, \mathrm{R}(f \circ \varphi(w)), 
\partial_{w} \left\{ \Tilde{g}_{1}, \Tilde{g}_{2}, \dots, \Tilde{g}_{s+1}  \right\} \}_{k(w)/k}.
\end{align*}
\end{fleqn}

We have the following commutative diagram:
\begin{center}
\begin{tikzcd}
	&
	\mathrm{Spec} \ k(w) \arrow[dl, hook', "r_{w}"']\arrow[r]\arrow[d, hook, "w"'] &
	\mathrm{Spec} \ k(\varphi(w)) \arrow[d, "f \circ \varphi(w)"] \\
	\mathrm{Spec} \ \mathcal{O}_{w} \arrow[r, "i_{w}"'] &
	\Tilde{C} \arrow[r, "\Tilde{f}"'] &
	A
\end{tikzcd}
\end{center}
so that $\mathrm{R}_{k(w)/k(\varphi(w))} ( f \circ \varphi(w) ) = \Tilde{f} \circ i_{w} \circ r_{w}$.
Let $\eta : \mathrm{Spec} \ K \to \Tilde{C}$ be the generic point inclusion. 
The morphism $\Tilde{f} \circ i_{w}$ fits into the following commutative diagram:
\begin{center}
\begin{tikzcd}
	\mathrm{Spec} \ K \arrow[r, "\Tilde{f} \circ \eta"]\arrow[d] &
	A \arrow[d] \\
	\mathrm{Spec} \ \mathcal{O}_{w} \arrow[ru, "\Tilde{f} \circ i_{w}"]\arrow[r] &
	\mathrm{Spec} k
\end{tikzcd}
\end{center}
Therefore we obtain 
$\mathrm{R}_{k(w)/k(\varphi(w))} ( f \circ \varphi(w) ) = \Tilde{f} \circ i_{w} \circ r_{w} = s_{w}(\Tilde{f} \circ \eta)$ 
for every closed point $w$ in $\Tilde{C}$. 

Let $\mathbb{P}(\Tilde{C})$ be the smooth compactification of $\Tilde{C}$. 
Then for every $w \in \mathbb{P}(\Tilde{C})-\Tilde{C}$, 
there exists $i(w) \in \{1, \dots, s+1\}$ such that $\Tilde{g}_{i(w)}(w) = 1$ (\cite{A}Lemma 6.6). 
Therefore we obtain a relation in $K_{r} (k; A, \mathcal{K}_{s}^{{\rm M}}):$
\begin{align*}
\phi_{r,s} ( d_{s+1}(C) ) 
&= \sum_{w \in \mathbb{P}(\Tilde{C})} 
\{s_{w}(\Tilde{f} \circ \eta), \dots, s_{w}(\Tilde{f} \circ \eta), 
\partial_{w} \left\{ \Tilde{g}_{1}, \Tilde{g}_{2}, \dots, \Tilde{g}_{s+1}  \right\} \}_{k(w)/k} \\
&= 0.
\end{align*}
This concludes the proof.
\end{proof} 
		\begin{lem}                                                       \label{commutativity}
Let $k'/k$ be a finite extension of fields. 
Then the homomorphism $\Phi_{r,s}$ commutes with the push-forward $\mathrm{Tr}_{k'/k}$.
$$
\begin{tikzcd}
	\mathrm{CH}^{d+s}(A_{k'}, s) \arrow[r, "\Phi_{r,s}^{k'}"] \arrow[d, "\mathrm{Tr}_{k'/k}"'] 
	& K_{r}(k'; A_{k'}, \mathcal{K}_{s}^{\mathrm{M}}) \arrow[d, "\mathrm{Tr}_{k'/k}"] \\
	\mathrm{CH}^{d+s}(A, s) \arrow[r, "\Phi_{r,s}^{k}"'] & K_{r}(k; A, \mathcal{K}_{s}^{\mathrm{M}})
\end{tikzcd}
$$
where the left vertical map is the proper push-forward of higher Chow groups, 
and the right vertical map is the trace map of Somekawa type $K$-groups defined at \eqref{S trace}.
\end{lem}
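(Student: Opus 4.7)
The plan is to verify the commutativity on generators of $\mathrm{CH}^{d+s}(A_{k'},s)$, i.e., on classes $[x]$ of closed points $x$ of $A_{k'} \times_{k'} (\square_{k'}^{1} - \{0,\infty\})^{s}$. Let $\pi : A_{k'} \times_{k'} (\square_{k'}^{1} - \{0,\infty\})^{s} \to A \times_{k} (\square_{k}^{1} - \{0,\infty\})^{s}$ denote the base-change projection; it is finite, and I will write $y = \pi(x)$ for the image of $x$, which is again a closed point. By the definition of the proper push-forward of zero cycles, $\mathrm{Tr}_{k'/k}[x] = [k(x):k(y)]\cdot [y]$, so the lower-left composition sends $[x]$ to $[k(x):k(y)] \cdot \{p_{A} \circ y, \dots, p_{A} \circ y, \{p_{1} \circ y, \dots, p_{s} \circ y\}\}_{k(y)/k}$. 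The upper-right composition sends $[x]$ to $\{p_{A} \circ x, \dots, p_{A} \circ x, \{p_{1} \circ x, \dots, p_{s} \circ x\}\}_{k(x)/k}$, where the $A_{k'}(k(x))$-entries are identified with $A(k(x))$ under the canonical identification.

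The key geometric observation is that the closed immersion $x : \mathrm{Spec}\,k(x) \hookrightarrow A_{k'} \times_{k'}(\square^{1} - \{0,\infty\})^{s}$ factors through $y$: concretely the $A$-morphism $p_{A}\circ x$ is the composition of the finite field inclusion $\mathrm{Spec}\,k(x) \to \mathrm{Spec}\,k(y)$ with $p_{A}\circ y$, and likewise for each coordinate $p_{i}$. In the language of the Somekawa group this reads $p_{A} \circ x = \mathrm{R}_{k(x)/k(y)}(p_{A}\circ y)$ and $p_{i} \circ x = \mathrm{R}_{k(x)/k(y)}(p_{i}\circ y)$, hence also $\{p_{1}\circ x, \dots, p_{s}\circ x\} = \mathrm{R}_{k(x)/k(y)}\{p_{1}\circ y, \dots, p_{s}\circ y\}$ in $K_{s}^{\mathrm{M}}(k(x))$.

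With this identification in hand, I would invoke the projection formula \eqref{rel-1} at position $i_{0}=1$ with $E=k(x)$, $F=k(y)$, $h_{1} = p_{A}\circ x \in A(k(x))$, and $h_{i} = p_{A}\circ y$ for $i=2,\dots,r$ and $h_{r+1} = \{p_{1}\circ y, \dots, p_{s}\circ y\}$. This produces the equality, in $K_{r}(k; A, \mathcal{K}_{s}^{\mathrm{M}})$,
\begin{equation*}
\{\mathrm{Tr}_{k(x)/k(y)}(p_{A}\circ x), p_{A}\circ y, \dots, p_{A}\circ y, \{p_{1}\circ y, \dots, p_{s}\circ y\}\}_{k(y)/k} = \{p_{A}\circ x, \dots, p_{A}\circ x, \{p_{1}\circ x, \dots, p_{s}\circ x\}\}_{k(x)/k}.
\end{equation*}
Finally, applying \eqref{TrRes} gives $\mathrm{Tr}_{k(x)/k(y)}(p_{A}\circ x) = \mathrm{Tr}_{k(x)/k(y)}(\mathrm{R}_{k(x)/k(y)}(p_{A}\circ y)) = [k(x):k(y)]\cdot (p_{A}\circ y)$, and multilinearity of the symbol turns the left-hand side into $[k(x):k(y)]\cdot\{p_{A}\circ y, \dots, p_{A}\circ y, \{p_{1}\circ y, \dots, p_{s}\circ y\}\}_{k(y)/k}$, matching the lower-left path.

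The main thing to take care of is the bookkeeping of residue fields and the identification $A_{k'}(k(x)) = A(k(x))$ (which relies on $k' \subset k(x)$ since $x$ lives over $k'$), together with checking that $y$ is indeed a closed point and the morphism $\pi$ is finite so that the formula $\mathrm{Tr}_{k'/k}[x] = [k(x):k(y)]\cdot [y]$ for the higher-Chow proper push-forward of a zero cycle applies verbatim; once these identifications are set up, the equality becomes a direct application of \eqref{rel-1} and \eqref{TrRes}.
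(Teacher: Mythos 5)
Your proof is correct and is exactly the ``straightforward'' verification the paper omits: reduce to generators $[x]$ of $c^{d+s}(A_{k'},s)$, identify $p_A\circ x$ and the $p_i\circ x$ with the restrictions along $k(y)\hookrightarrow k(x)$ of the corresponding data at $y=\pi(x)$, and then combine the projection formula \eqref{rel-1} with \eqref{TrRes} to match the factor $[k(x):k(y)]$ coming from the proper push-forward. The only cosmetic point is that for $r=0$ the projection formula must be applied in the Milnor $K$-theory slot $i_0=r+1$ rather than at $i_0=1$, which changes nothing in the argument.
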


The proof is straightforward. 
	\subsection{The homomorphism $\Psi_{r,s}'$}
		\noindent \textit{Notation and Observation.}
We have the degree map $\mathrm{deg}_{A} : \mathrm{CH}_{0}(A) \twoheadrightarrow \mathbb{Z}$.
Define $\mathrm{A}_{0}(A) := \mathrm{Ker(deg_{A})}$. 
%
(i) Let $0$ be the unit of $A$.
Let $F/k$ be a field extention. 
For $a \in A(F)$, we denote by $[a]_{F} \in \mathrm{CH}_{0}(A_{F})$ the class of $a$ 
and define $\lambda_{F}(a) = [a]_{F} - [0]_{F} \in \mathrm{A}_{0}(A_{F})$. 
%
\\
(ii) Let $m : A \times_{k} A \to A$ be the multiplication morphism on $A$.
We also denote $m$ by $+$. 
Recall that the Pontryagin product is defined by
\begin{equation}																		\label{Pontryagin}
\begin{aligned}
* : \mathrm{CH}_{0}(A) &\otimes \mathrm{CH}_{0}(A) \to \mathrm{CH}_{0}(A) \\
\alpha &\otimes \beta \mapsto m_{*}(\alpha \times \beta).
\end{aligned}
\end{equation}
This gives a ring structure on $\mathrm{CH}_{0}(A)$. 
If $x, y \in A(k)$, then we have $[x]_{k} * [y]_{k} = [x + y]_{k}$ by definition. 
For a field extension $F/k$ and $x, y \in A(F)$, we obtain
\begin{equation}                                                                                                                                \label{elementary-1}
\lambda_{F}(x + y) - \lambda_{F}(x) - \lambda_{F}(y) = \lambda_{F}(x) * \lambda_{F}(y) \in \mathrm{A}_{0}(A_{F}).
\end{equation}
If $p : A \to B$ is a homomorphism of abelian varieties over $k$ or the structure morphism of $A$, 
the proper push-forward $p_{*} : \mathrm{CH}_{0}(A) \to \mathrm{CH}_{0}(B)$ is a ring homomorphism, 
where the ring structure on $\mathrm{CH}_{0}(k)$ is the one compatible with $\mathbb{Z}$, 
which is defined by \eqref{Pontryagin} with 
the natural isomorphism $m : \mathrm{Spec} \, k \times_{k} \mathrm{Spec} \, k \to \mathrm{Spec} \, k$. 
In particular $\mathrm{deg}_{A}$ is a ring homomorphism. 
The subgroup $\mathrm{A}_{0}(A) = \mathrm{Ker}(\deg_{A})$ is an ideal of $\mathrm{CH}_{0}(A)$ 
with respect to the Pontryagin product. 
If $F/k$ is a finite field extension, 
the proper push-forward $\mathrm{Tr}_{F/k} : \mathrm{CH}_{0}(A_{F}) \to \mathrm{CH}_{0}(A)$ 
is also a ring homomorphism. 
		\begin{lem}																	\label{lemma psi}
For $r, s \geq 0$, we define the map
\begin{align*}
\psi_{r,s} : \coprod_{F/k} 
&\underbrace{A(F) \times \dots \times A(F)}_{r} \times K_{s}^{\mathrm{M}}(F)
\to \mathrm{CH}^{d+s}(A, s) \\
&(a_{1}, \dots, a_{r}, b)_{F} 
\mapsto \mathrm{Tr}_{F/k} \big(
\left( \lambda_{F}(a_{1}) * \dots * \lambda_{F}(a_{r}) \right)
\times [b]_{F} \big)
\end{align*}
where $F$ runs through all finite extensions of $k$, 
$\times$ is the exterior product, and $\mathrm{Tr}_{F/k}$ is the proper push-forward. 
See \eqref{N-S,T} and \textit{Notation and Observation} {\rm (i)} for $[\ ]_{F}$ and $\lambda_{F}$.
Then $\psi_{r,s}$ satisfies following properties: \\
{\rm(i)} For $a_{1}, \dots, a_{i,1}, a_{i,2}, \dots, a_{r} \in A(F)$, $b \in K_{s}^{\mathrm{M}}(F)$, 
\begin{equation}                                                                                                              \label{property-1}
\begin{aligned}
&\psi_{r,s}(a_{1}, \dots, a_{i,1}+a_{i,2}, \dots, a_{r}, b) 
- \psi_{r,s}(a_{1}, \dots, a_{i,1}, \dots, a_{r}, b) 
- \psi_{r,s}(a_{1}, \dots, a_{i,2}, \dots, a_{r}, b) \\
&= \psi_{r+1,s}(a_{1}, \dots, a_{i,1}, a_{i,2}, \dots, a_{r}, b).
\end{aligned}
\end{equation}
{\rm(ii)} Let $pr$ is the composition 
$\coprod_{F/k \ \text{finite}} A(F)^{\times r} \times K_{s}^{\mathrm{M}}(F) 
\to T_{r}(k; A, \mathcal{K}_{s}^{\mathrm{}M}) 
\twoheadrightarrow S_{r}(k; A, \mathcal{K}_{s}^{\mathrm{}M})$.
It holds that 
\begin{equation}                                                                                                             \label{property-6}
\Phi_{r,s}' \circ \psi_{r,s} = r! \cdot pr,
\end{equation}
where $\Phi_{r,s}'$ is defined after Proposition \ref{prop phi}.  
\end{lem}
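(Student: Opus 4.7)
The plan is to dispatch property (i) directly from the Pontryagin identity (\ref{elementary-1}), and to reduce property (ii) via Lemma \ref{commutativity} to an explicit combinatorial computation inside $K_{r}(F; A_{F}, \mathcal{K}_{s}^{\mathrm{M}})$.

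For (i), I would rearrange (\ref{elementary-1}) as
$$
\lambda_{F}(a_{i,1} + a_{i,2}) = \lambda_{F}(a_{i,1}) + \lambda_{F}(a_{i,2}) + \lambda_{F}(a_{i,1}) * \lambda_{F}(a_{i,2}),
$$
substitute it into the $i$-th factor of $\lambda_{F}(a_{1}) * \cdots * \lambda_{F}(a_{r})$, and exploit bilinearity of $*$, of the exterior product $\times [b]_{F}$, and of $\mathrm{Tr}_{F/k}$. The three resulting summands match the three $\psi$'s in (\ref{property-1}), with the cross term producing $\psi_{r+1,s}(a_{1}, \dots, a_{i,1}, a_{i,2}, \dots, a_{r}, b)$.

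For (ii), Lemma \ref{commutativity} lets me commute $\Phi_{r,s}'$ past $\mathrm{Tr}_{F/k}$, reducing to computing $\Phi_{r,s}^{F}\bigl((\lambda_{F}(a_{1}) * \cdots * \lambda_{F}(a_{r})) \times [b]_{F}\bigr)$ modulo the $\mathfrak{S}_{r}$-action. Expanding $\lambda_{F}(a_{i}) = [a_{i}]_{F} - [0]_{F}$ and using $[x]_{F} * [y]_{F} = [x+y]_{F}$ (with $[0]_{F}$ the identity for $*$), I obtain
$$
\lambda_{F}(a_{1}) * \cdots * \lambda_{F}(a_{r}) = \sum_{S \subseteq \{1,\dots,r\}} (-1)^{r-|S|} \Bigl[\textstyle\sum_{i \in S} a_{i} \Bigr]_{F}.
$$
The map $(x, b) \mapsto \Phi_{r,s}^{F}([x]_{F} \times [b]_{F})$ is $\mathbb{Z}$-bilinear on $A(F) \times K_{s}^{\mathrm{M}}(F)$; on symbol generators $b = \{u_{1}, \dots, u_{s}\}$ with $u_{j} \in F^{*}$, the cycle $[x]_{F} \times [b]_{F}$ is represented by the closed $F$-point $(x, u_{1}, \dots, u_{s})$ of $A_{F} \times (\square^{1} - \{0, \infty\})_{F}^{s}$, so Proposition \ref{prop phi} gives $\{x, \dots, x, b\}_{F/F}$. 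Then I expand each $\{\sum_{i \in S} a_{i}, \dots, \sum_{i \in S} a_{i}, b\}_{F/F}$ using the $\mathbb{Z}$-multilinearity of the Somekawa symbol in its first $r$ arguments (inherited from the tensor product structure of $T_{r}$ in (\ref{Tproduct})) and swap the order of summation; the inclusion-exclusion identity $\sum_{S \supseteq J} (-1)^{r-|S|} = \delta_{|J|, r}$ collapses the signed sum into $\sum_{\sigma \in \mathfrak{S}_{r}} \{a_{\sigma(1)}, \dots, a_{\sigma(r)}, b\}_{F/F}$. Projecting to $S_{r}(F; A_{F}, \mathcal{K}_{s}^{\mathrm{M}})$ yields $r! \cdot \{a_{1}, \dots, a_{r}, b\}_{F/F}$, and applying $\mathrm{Tr}_{F/k}$ from \eqref{S trace} produces $r! \cdot \{a_{1}, \dots, a_{r}, b\}_{F/k} = r! \cdot pr((a_{1}, \dots, a_{r}, b)_{F})$.

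The main obstacle is the identification $\Phi_{r,s}^{F}([x]_{F} \times [b]_{F}) = \{x, \dots, x, b\}_{F/F}$ for a general $b \in K_{s}^{\mathrm{M}}(F)$: the Nesterenko-Suslin-Totaro isomorphism \eqref{N-S,T} produces an explicit closed-point representative of $[b]_{F}$ only on symbol generators, so one must first establish the $\mathbb{Z}$-bilinearity of both sides in $(x, b)$ independently and then reduce to the symbol case. Once this identification is in hand, the remainder of (ii) is routine bookkeeping around the inclusion-exclusion collapse, and (i) follows immediately from (\ref{elementary-1}).
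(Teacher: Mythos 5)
Your proposal is correct and follows essentially the same route as the paper: (i) is read off from \eqref{elementary-1}, and (ii) uses Lemma \ref{commutativity} to reduce to the base field $F$, expands the Pontryagin product as a signed sum over subsets, and collapses it by the inclusion--exclusion identity (the paper computes the same coefficient $\sum_{j=0}^{r-c}(-1)^{r-c-j}\binom{r-c}{j}$). Your extra care in justifying $\Phi_{r,s}^{F}([x]_{F}\times[b]_{F})=\{x,\dots,x,b\}_{F/F}$ by bilinearity and reduction to symbol generators is a point the paper leaves implicit, but it is not a difference in method.
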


\begin{proof}
The relation \eqref{property-1} follows from \eqref{elementary-1}.
By Lemma \ref{commutativity}, we have
\begin{align*}
\Phi_{r,s}' \psi_{r,s}(a_{1}, \dots, a_{r}, b)
&= \mathrm{Tr}_{F/k} \Phi_{r,s}'^{F} \Big( \big( ([a_{1}]_{F} - [0]_{F}) * \dots * ([a_{r}]_{F} - [0]_{F}) \big) \times [b]_{F} \Big) \\
&= \sum_{j=0}^{r} (-1)^{r-j} 
	\sum_{1\leq \nu_{1} < \dots < \nu_{j} \leq r}
	\left\{ \sum_{i = 1}^{j} a_{\nu_{i}}, \dots, \sum_{i = 1}^{j} a_{\nu_{i}}, b \right\}_{F/k} \\
&= \sum_{ \{ i_{1}, \dots, i_{r} \} = \{ 1, \dots, r \} } \left\{ a_{i_{1}}, \dots, a_{i_{r}}, b \right\}_{F/k} \\
&= r! \left\{ a_{1}, \dots, a_{r}, b \right\}_{F/k}.
\end{align*}
At the third equality we compute the coefficient of a symbol 
$\{ a_{i_{1}}, \dots, a_{i_{r}}, b \}_{F/k}$ with $i_{1}, \dots, i_{r} \in \{ 1, \dots, r \}$ 
that arises when the left hand side is developed. 
If the subset $\{ i_{1}, \dots, i_{r} \} \subset \{ 1, \dots, r \}$ consists of different $c$ ($1 \leq c \leq r$) elements,
it turns out to be $\sum_{j = 0}^{r-c} (-1)^{r-c-j} \binom{r-c}{j}$,
which is $0$ if $c < r$ and is $1$ if $c = r$.
\end{proof}

\begin{defn}																				\label{Gazaki type filtration}
We define two descending filtrations $\{F_{s}^{\nu}\}_{\nu \geq 0}$ and $\{G_{s}^{\nu}\}_{\nu \geq 0}$ 
of subgroups in $\mathrm{CH}^{d+s}(A, s)$. 
For $0 \leq \nu \leq s$, define
\begin{equation*}
F_{s}^{\nu} = G_{s}^{\nu} = \mathrm{CH}^{d+s}(A, s).
\end{equation*}
Let $r,s \geq 0$.
We define 
\begin{equation*}
F_{s}^{r+s} = \bigcap_{j=0}^{r-1} \mathrm{Ker} \, \Phi_{j,s}', \ 
G_{s}^{r+s} 
= \left\langle \mathrm{Im}(\psi_{r,s}) \right\rangle.
\end{equation*}
See Proposition \ref{prop phi} and Lemma \ref{lemma psi} for $\Phi_{r,s}'$ and $\psi_{r,s}$.
\end{defn}
It follows from \eqref{property-1} that $G_{s}^{\nu} \supset G_{s}^{\nu+1}$.
By definition, the map $\Phi_{r,s}'$ induces an injection 
\begin{equation}																					\label{injection}
F_{s}^{r+s}/F_{s}^{r+s+1} \hookrightarrow S_{r}(k; A, \mathcal{K}_{s}^{\mathrm{M}}).
\end{equation}
Let $\pi : A \to \mathrm{Spec} \, k$ be the structure morphism. For example, 
\begin{gather*}
F_{s}^{s+1} = \mathrm{Ker} (\Phi_{0,s} = \pi_{*} : \mathrm{CH}^{d+s}(A, s) \to 
K_{s}^{\mathrm{M}}(k) \cong \mathrm{CH}^{s}(k, s)), \\
G_{s}^{s+1} = \left\langle \mathrm{Tr}_{F/k}\Big( ([x]_{F} - [0]_{F}) \times [b]_{F} \Big) 
: x \in A(F), \, b \in K_{s}^{\mathrm{M}}(F) \right\rangle, \\
G_{s}^{s+2} = \left\langle \mathrm{Tr}_{F/k}\Big( ([x+y]_{F} - [x]_{F} - [y]_{F} + [0]_{F}) \times [b]_{F} \Big) 
: x, y \in A(F), \, b \in K_{s}^{\mathrm{M}}(F) \right\rangle.
\end{gather*}
It holds that $F_{s}^{s+1} = G_{s}^{s+1}$. 
		\begin{prop}                                                                                                                      \label{subfiltration}
The filtration $\{G_{s}^{\nu}\}_{\nu \geq 0}$ is a subfiltration of $\{F_{s}^{\nu}\}_{\nu \geq 0}$.
\end{prop}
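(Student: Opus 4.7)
The claim is that $G_s^{\nu}\subseteq F_s^{\nu}$ for every $\nu\geq 0$. When $\nu\leq s$ both sides equal $\mathrm{CH}^{d+s}(A,s)$, so only the case $\nu=r+s$ with $r\geq 1$ has content. Since $G_s^{r+s}$ is generated by the elements $\psi_{r,s}(a_{1},\ldots,a_{r},b)$ and $F_s^{r+s}=\bigcap_{j=0}^{r-1}\mathrm{Ker}\,\Phi_{j,s}'$, the plan reduces to verifying
$$
\Phi_{j,s}'\bigl(\psi_{r,s}(a_{1},\ldots,a_{r},b)\bigr)=0 \qquad \text{for every } 0\leq j\leq r-1.
$$

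I would follow the computation in the proof of Lemma \ref{lemma psi} almost verbatim, only replacing the number of $A$-entries in each symbol by $j$. First, Lemma \ref{commutativity} lets me commute $\Phi_{j,s}'$ past $\mathrm{Tr}_{F/k}$, reducing the problem to computing $\Phi_{j,s}'^{F}\bigl((\lambda_F(a_1)*\cdots*\lambda_F(a_r))\times[b]_F\bigr)$ in $S_j(F;A_F,\mathcal{K}_s^{\mathrm{M}})$. Expanding $\lambda_F(a_i)=[a_i]_F-[0]_F$ and using $[c]_F*[c']_F=[c+c']_F$ yields an alternating sum indexed by subsets $S\subseteq\{1,\ldots,r\}$, and the definition of $\Phi_{j,s}'^{F}$ sends each term $[\sum_{i\in S}a_i]_F\times[b]_F$ to the symbol $\{\sum_{i\in S}a_i,\ldots,\sum_{i\in S}a_i,b\}_{F/F}$ with $j$ copies of the $A$-entry. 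After applying $\mathrm{Tr}_{F/k}$ this produces exactly the sum displayed in the proof of Lemma \ref{lemma psi}, except that each symbol carries $j$ copies of $\sum_i a_{\nu_i}$ instead of $r$.

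The final step is the same combinatorial identity used there: multilinearity of the Somekawa-type group in each $A$-slot (immediate from its construction as a quotient of $\bigoplus_F A(F)^{\otimes r}\otimes K_s^{\mathrm{M}}(F)$) expands each such symbol into $\sum_{(i_1,\ldots,i_j)\in\{1,\ldots,r\}^j}\{a_{i_1},\ldots,a_{i_j},b\}_{F/k}$, and the coefficient of a fixed summand depends only on the cardinality $c=|\{i_1,\ldots,i_j\}|$. This coefficient equals $\sum_{\ell=0}^{r-c}(-1)^{r-c-\ell}\binom{r-c}{\ell}=(1-1)^{r-c}$, which vanishes unless $c=r$. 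Because $c\leq j<r$, every coefficient vanishes, so $\Phi_{j,s}'(\psi_{r,s}(a_1,\ldots,a_r,b))=0$, as needed.

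Essentially no new input beyond Lemma \ref{lemma psi} is required: the same binomial sum that yielded the coefficient $r!$ there now yields $0$ because the diagonal case $c=r$ is excluded when $j<r$. The only minor bookkeeping subtlety is that $\Phi_{j,s}'^{F}$ applied to an exterior product $[c]_F\times[b]_F$ with $c\in A(F)$ produces the symbol $\{c,\ldots,c,b\}_{F/F}$ with $j$ copies of $c$, but this is a direct consequence of how $\Phi_{j,s}'$ is defined on closed points of $A\times(\square^1-\{0,\infty\})^s$. Hence no genuine obstacle is anticipated.
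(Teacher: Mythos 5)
Your argument is correct and rests on the same mechanism as the paper's: expand the Pontryagin product of the $\lambda_F(a_i)$, push through $\Phi_{j,s}'$ using Lemma \ref{commutativity}, and kill every resulting symbol via the alternating binomial identity $\sum_{\ell}(-1)^{r-c-\ell}\binom{r-c}{\ell}=0$ for $c<r$. The only difference is organizational: the paper inducts on $\nu$ so that only the case $j=r-1$ needs computing (handled by combining \eqref{property-1} with \eqref{property-6} and bilinearity of the symbol), whereas you run the coefficient computation of Lemma \ref{lemma psi}(ii) uniformly for all $j<r$; both are valid.
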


\begin{proof}
For $\nu = 0, \dots, s$, the claim is trivial. 
Now we assume the claim for $\nu < r+s$ (for some $r$).
Then $\Phi_{j,s}'(G_{s}^{r+s}) \subset \Phi_{j,s}'(G_{s}^{r+s-1}) = 0$ for $j = 0, \dots, r-2$.
It is sufficient to show that $\Phi_{r-1,s}'(G_{s}^{r+s}) = 0$ for $r \geq 1$.
By \eqref{property-1} and \eqref{property-6}, we have
\begin{align*}
\Phi_{r-1,s}' \psi_{r,s} (a_{1}, \dots, a_{r}, b)
&= \Phi_{r-1,s}' \psi_{r-1,s} (a_{1}+a_{2}, a_{3}, \dots, a_{r}, b) \\
& \ \ \ - \Phi_{r-1,s}' \psi_{r-1,s} (a_{1}, a_{3}, \dots, a_{r}, b)
- \Phi_{r-1,s}' \psi_{r-1,s} (a_{2}, a_{3}, \dots, a_{r}, b) \\
&= (r-1)! \{ a_{1}+a_{2}, a_{3}, \dots, a_{r}, b \}_{F/k} \\
& \ \ \ - (r-1)! \{ a_{1}, a_{3}, \dots, a_{r}, b \}_{F/k}
- (r-1)! \{ a_{2}, a_{3}, \dots, a_{r}, b \}_{F/k} \\
&= 0.
\end{align*}
This concludes the proof.
\end{proof} 
		By \eqref{property-1}, we obtain a surjective homomorphism:
$$
T_{r}(k; A, \mathcal{K}_{s}^{\mathrm{M}}) \to 
\frac{G_{s}^{r+s}\mathrm{CH}^{d+s}(A, s)}{G_{s}^{r+s+1}\mathrm{CH}^{d+s}(A, s)},
$$
where $T_{r}(k; A, \mathcal{K}_{s}^{\mathrm{M}})$ is defined at \eqref{Tproduct}.
By Proposition \ref{subfiltration}, it induces a homomorphism:
$$
\psi_{r,s}' : T_{r}(k; A, \mathcal{K}_{s}^{\mathrm{M}}) \to 
\frac{F_{s}^{r+s}\mathrm{CH}^{d+s}(A, s)}{F_{s}^{r+s+1}\mathrm{CH}^{d+s}(A, s)}.
$$ 

\begin{prop}																				\label{Psi_{r,s}}
Let $r,s \geq 0$ be integers. The homomorphism $\psi_{r,s}'$ induces
\begin{align*}
\Psi_{r,s}' : S_{r}(k; A, \mathcal{K}_{s}^{\mathrm{M}}) &\to 
\frac{F_{s}^{r+s}\mathrm{CH}^{d+s}(A, s)}{F_{s}^{r+s+1}\mathrm{CH}^{d+s}(A, s)} \\
\{ a_{1}, \dots, a_{r}, b \}_{F/k} 
&\mapsto \mathrm{Tr}_{F/k} \big(
\left( \lambda_{F}(a_{1}) * \dots * \lambda_{F}(a_{r}) \right)
\times [b]_{F} \big), 
\end{align*}
and the property $\Phi_{r,s}' \circ \Psi_{r,s}' = r!$ holds on $S_{r}(k; A, \mathcal{K}_{s}^{\mathrm{M}})$.
\end{prop}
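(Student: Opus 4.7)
The plan is to deduce $\Psi_{r,s}'$ formally from the already-established injection $\iota \colon F_{s}^{r+s}/F_{s}^{r+s+1} \hookrightarrow S_{r}(k; A, \mathcal{K}_{s}^{\mathrm{M}})$ of \eqref{injection} together with the identity $\Phi_{r,s}' \circ \psi_{r,s} = r! \cdot pr$ of Lemma \ref{lemma psi}(ii). After extending $pr$ linearly to a homomorphism $T_{r}(k; A, \mathcal{K}_{s}^{\mathrm{M}}) \twoheadrightarrow S_{r}(k; A, \mathcal{K}_{s}^{\mathrm{M}})$, still denoted $pr$, Lemma \ref{lemma psi}(ii) yields the identity of homomorphisms
$$
\iota \circ \psi_{r,s}' \;=\; r! \cdot pr \colon T_{r}(k; A, \mathcal{K}_{s}^{\mathrm{M}}) \longrightarrow S_{r}(k; A, \mathcal{K}_{s}^{\mathrm{M}}).
$$

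The main step is to show that $\psi_{r,s}'$ kills $\ker(pr)$, which by the very definition of $S_{r}(k; A, \mathcal{K}_{s}^{\mathrm{M}})$ is the subgroup of $T_{r}$ generated by the Somekawa relations \eqref{rel-1}, \eqref{rel-2} together with the symmetric-group differences $\sigma \cdot \xi - \xi$ for $\sigma \in \mathfrak{S}_{r}$ acting on symbols $\xi$. For any $x \in \ker(pr)$, the displayed identity gives $\iota(\psi_{r,s}'(x)) = r! \cdot pr(x) = 0$ in $S_{r}$, and injectivity of $\iota$ forces $\psi_{r,s}'(x) = 0$ in $F_{s}^{r+s}/F_{s}^{r+s+1}$. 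Hence $\psi_{r,s}'$ descends uniquely through $pr$ to a homomorphism $\Psi_{r,s}' \colon S_{r}(k; A, \mathcal{K}_{s}^{\mathrm{M}}) \to F_{s}^{r+s}/F_{s}^{r+s+1}$, whose value on a symbol $\{a_{1}, \dots, a_{r}, b\}_{F/k}$ is $\mathrm{Tr}_{F/k}((\lambda_{F}(a_{1}) * \dots * \lambda_{F}(a_{r})) \times [b]_{F})$ by the very construction of $\psi_{r,s}$.

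The identity $\Phi_{r,s}' \circ \Psi_{r,s}' = r!$ on $S_{r}$ is then immediate from the same diagram: for any $\xi \in S_{r}$ with lift $\tilde{\xi} \in T_{r}$, one has $\Phi_{r,s}'(\Psi_{r,s}'(\xi)) = \iota(\psi_{r,s}'(\tilde{\xi})) = r! \cdot pr(\tilde{\xi}) = r! \cdot \xi$. The proof is thus a short formal diagram-chase with no real obstacle; the geometric content — Weil reciprocity for higher Chow groups via Theorem \ref{1.4.}, together with the projection-formula compatibilities needed to make $\psi_{r,s}$ land in the correct piece of the filtration — has already been absorbed into Lemma \ref{lemma psi} and the injection \eqref{injection}.
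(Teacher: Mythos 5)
Your argument is correct, and it is leaner than the paper's. The paper proves the descent by checking the two generating families of $\ker(pr)$ separately: for the projection-formula relation \eqref{rel-1} it uses exactly your mechanism (compute $\Phi_{r,s}'\circ\psi_{r,s}'$ on both sides, observe the results agree in $S_{r}(k;A,\mathcal{K}_{s}^{\mathrm{M}})$, and invoke the injectivity of \eqref{injection}), but for the Weil-reciprocity relation \eqref{rel-2} it instead evaluates $\psi_{r,s}'$ directly on the element $\sum_{v} s_{v}(f_{1})\otimes\cdots\otimes s_{v}(f_{r})\otimes\partial_{v}(g)$, expands the Pontryagin products, and invokes Akhtar's reciprocity for higher Chow groups (Theorem \ref{1.4.}) together with the compatibility of specialization maps to conclude that the resulting sum vanishes in $\mathrm{CH}^{d+s}(A,s)$. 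Your observation is that the formal mechanism applies uniformly to every element of $\ker(pr)$ at once: $pr(x)=0$ forces $\Phi_{r,s}'(\psi_{r,s}'(x))=r!\cdot pr(x)=0$, hence $\psi_{r,s}'(x)=0$ by \eqref{injection}, so Theorem \ref{1.4.} is not needed at this step. What each approach buys: yours is shorter and makes clear that the only geometric input is the one already spent in Proposition \ref{prop phi} (where the defining relation \eqref{rel-2} of $K_{r}(k;A,\mathcal{K}_{s}^{\mathrm{M}})$ is used to show $\Phi_{r,s}$ kills boundaries, which is precisely what makes \eqref{injection} available), while the paper's direct computation exhibits an explicit reciprocity identity in $\mathrm{CH}^{d+s}(A,s)$ itself, of independent interest but logically redundant here. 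One small correction to your closing sentence: Theorem \ref{1.4.} is not ``absorbed into'' Lemma \ref{lemma psi} and \eqref{injection} --- neither of those uses it; rather, your argument shows it can be dispensed with entirely for this proposition.
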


\begin{proof}
The property \eqref{property-6} shows that $\Phi_{r,s}' \circ \psi_{r,s}' = r! \cdot pr$, 
where $pr$ is the natural projection 
$T_{r}(k; A, \mathcal{K}_{s}^{\mathrm{M}}) \twoheadrightarrow S_{r}(k; A, \mathcal{K}_{s}^{\mathrm{M}})$.
Set $H_{i} = A$ for $1 \leq i \leq r$ and $H_{r+1} = \mathcal{K}_{s}^{\mathrm{M}}$. 
If $E/F/k$ are finite field extensions and we have $h_{i_{0}} \in H_{i_{0}}(E)$ for some $i_{0} \in \{1, \dots, r+1\}$, 
and $h_{i} \in H_{i}(F)$ for all $i \neq i_{0}$, then
\begin{align*}
\Phi_{r,s}' \circ \psi_{r,s}'(h_{1} \otimes \dots \otimes \mathrm{Tr}_{E/F}(h_{i_{0}}) \otimes \dots \otimes h_{r+1}) 
&= r! \{ h_{1}, \dots, \mathrm{Tr}_{E/F}(h_{i_{0}}), \dots, h_{r+1} \}_{F/k} \\
&= r! \{ \mathrm{R}_{E/F}(h_{1}), \dots, h_{i_{0}}, \dots, \mathrm{R}_{E/F}(h_{r+1}) \}_{E/k} \\
&= \Phi_{r,s}' \circ \psi_{r,s}'
(\mathrm{R}_{E/F}(h_{1}) \otimes \dots \otimes h_{i_{0}} \otimes \dots \otimes \mathrm{R}_{E/F}(h_{r+1}))
\end{align*}
By \eqref{injection}, we have
$$
\psi_{r,s}'(h_{1} \otimes \dots \otimes \mathrm{Tr}_{E/F}(h_{i_{0}}) \otimes \dots \otimes h_{r+1}) 
= \psi_{r,s}'(\mathrm{R}_{E/F}(h_{1}) \otimes \dots \otimes h_{i_{0}} \otimes \dots \otimes \mathrm{R}_{E/F}(h_{r+1})).
$$

Let $K$ be a function field in one variable over $k$. 
Let $f_{1}, \dots, f_{r} \in A(K)$ and $g \in K_{s+1}^{\mathrm{M}} (K)$. Then
\begin{align*}
&\psi_{r,s}' \left( \sum_{v} s_{v}(f_{1}) \otimes \dots \otimes s_{v}(f_{r}) \otimes \partial_{v} (g) \right) \\
&= \sum_{j=0}^{r} (-1)^{r-j} \sum_{1\leq \nu_{1} < \dots < \nu_{j} \leq r} 
	\sum_{v} \mathrm{Tr}_{k(v)/k} \left(
	[s_{v}(f_{\nu_{1}} + \dots + f_{\nu_{j}})]_{k(v)} \times [\partial_{v} (g)]_{k(v)} \right) \\
&= 0, 
\end{align*}
where the last equality follows by Theorem \ref{1.4.} and the following commutative diagram: 
$$
\begin{tikzcd}
	A(K) \arrow[r, "s_{v}"] \arrow[d, "\text{[ ]}_{K}"'] 
	& A(k(v)) \arrow[d, "\text{[ ]}_{k(v)}"] \\ 
	\mathrm{CH}_{0}(A_{K}) \arrow[r, "s_{v}"'] 
	& \mathrm{CH}_{0}(A_{k(v)}).
\end{tikzcd}
$$
Thus $\Psi_{r,s}'$ is well-defined and the latter statement is concluded.
\end{proof} 
		\begin{cor}
The composition
$$
\Psi_{r,s}' \circ \Phi_{r,s}' : (G_{s}^{r+s} + F_{s}^{r+s+1}) / F_{s}^{r+s+1} 
\to S_{r}(k; A, \mathcal{K}_{s}^{\mathrm{M}}) 
\to (G_{s}^{r+s} + F_{s}^{r+s+1}) / F_{s}^{r+s+1}
$$
is the multiplication by $r!$.
\end{cor}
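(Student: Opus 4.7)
The plan is to verify the identity on a generating set of $(G_s^{r+s}+F_s^{r+s+1})/F_s^{r+s+1}$ and then extend by $\mathbb{Z}$-linearity. Since $G_s^{r+s}$ is by construction generated by the image of $\psi_{r,s}$, it suffices to compute the composition on classes of the form $[\alpha]$ with $\alpha = \psi_{r,s}(a_1, \dots, a_r, b)$ for some finite extension $F/k$, elements $a_1, \dots, a_r \in A(F)$, and $b \in K_s^{\mathrm{M}}(F)$. Before starting, I would note that $\Phi_{r,s}'$ is well-defined on this quotient because Definition \ref{Gazaki type filtration} gives $F_s^{r+s+1} \subset \ker \Phi_{r,s}'$, and that the domain genuinely sits inside $F_s^{r+s}/F_s^{r+s+1}$ by Proposition \ref{subfiltration}.

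The first step is to apply Lemma \ref{lemma psi}(ii) to the generator $\alpha$, which yields
$$
\Phi_{r,s}'(\alpha) = r! \cdot \{a_1, \dots, a_r, b\}_{F/k} \in S_r(k; A, \mathcal{K}_s^{\mathrm{M}}).
$$
The second step is to feed this element into $\Psi_{r,s}'$ and unwind the explicit formula from Proposition \ref{Psi_{r,s}}. Using $\mathbb{Z}$-linearity,
$$
\Psi_{r,s}'(\Phi_{r,s}'(\alpha)) = r! \cdot \mathrm{Tr}_{F/k}\bigl( (\lambda_F(a_1) * \cdots * \lambda_F(a_r)) \times [b]_F \bigr) = r! \cdot [\alpha]
$$
in $F_s^{r+s}/F_s^{r+s+1}$, since the middle expression is precisely $r!$ times $\psi_{r,s}(a_1, \dots, a_r, b)$. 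In particular, the output lies back inside $(G_s^{r+s}+F_s^{r+s+1})/F_s^{r+s+1}$, and linearity then propagates the identity to every element of that subgroup, proving the corollary.

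There is no real obstacle; the statement is essentially the juxtaposition of the two ``inverse up to $r!$'' identities already established, namely $\Phi_{r,s}' \circ \psi_{r,s} = r! \cdot pr$ from Lemma \ref{lemma psi}(ii) and $\Phi_{r,s}' \circ \Psi_{r,s}' = r!$ from Proposition \ref{Psi_{r,s}}, now composed in the opposite order. The only care required is notational: keeping track of whether a given element lives in $\mathrm{CH}^{d+s}(A,s)$, in the quotient $F_s^{r+s}/F_s^{r+s+1}$, or in the Somekawa-type group $S_r(k; A, \mathcal{K}_s^{\mathrm{M}})$.
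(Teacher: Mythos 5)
Your proof is correct and is essentially the paper's own argument: the paper observes that $(G_{s}^{r+s}+F_{s}^{r+s+1})/F_{s}^{r+s+1}$ is the image of $\Psi_{r,s}'$ and deduces the claim from $\Phi_{r,s}'\circ\Psi_{r,s}'=r!$, which is exactly what your generator-by-generator computation via $\Phi_{r,s}'\circ\psi_{r,s}=r!\cdot pr$ amounts to. No gaps.
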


\begin{proof}
The subgroup $(G_{s}^{r+s} + F_{s}^{r+s+1}) / F_{s}^{r+s+1} \subset F_{s}^{r+s} / F_{s}^{r+s+1}$ is the image of $\Psi_{r,s}'$, 
so that the claim is deduced from $\Psi_{r,s}' \Phi_{r,s}' \Psi_{r,s}' = r! \Psi_{r,s}'$.
\end{proof}

\begin{thm}                                                                                                                         \label{canonical isomorphisms}
The injection \eqref{injection} is an isomorphism up to $r!$-torsion: 
$$
\Phi_{r,s}' : F_{s}^{r+s} / F_{s}^{r+s+1} \otimes \mathbb{Z} \left[ \frac{1}{r!} \right] \xrightarrow{\sim} 
S_{r}(k; A, \mathcal{K}_{s}^{\mathrm{M}}) \otimes \mathbb{Z} \left[ \frac{1}{r!} \right]
$$
with $\Phi_{r,s}'^{-1} = (1/r!)\Psi_{r,s}'$.
\end{thm}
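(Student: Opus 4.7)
The plan is to deduce the theorem as a formal consequence of two facts already in hand: the injectivity of $\Phi_{r,s}'$ on $F_s^{r+s}/F_s^{r+s+1}$ recorded in \eqref{injection}, and the identity $\Phi_{r,s}' \circ \Psi_{r,s}' = r! \cdot \mathrm{id}$ on $S_r(k; A, \mathcal{K}_s^{\mathrm{M}})$ established in Proposition \ref{Psi_{r,s}}. After inverting $r!$, these two ingredients together force $\Phi_{r,s}'$ to be bijective with inverse $\tfrac{1}{r!}\Psi_{r,s}'$.

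Concretely, I would first observe that since $\mathbb{Z}[1/r!]$ is a localization of $\mathbb{Z}$, it is a flat $\mathbb{Z}$-module, so tensoring with it preserves the injectivity of $\Phi_{r,s}'$ from \eqref{injection}. Thus $\Phi_{r,s}' \otimes \mathbb{Z}[1/r!]$ remains injective. For surjectivity, I would rewrite the identity $\Phi_{r,s}' \circ \Psi_{r,s}' = r!$ from Proposition \ref{Psi_{r,s}} as $\Phi_{r,s}' \circ (\tfrac{1}{r!}\Psi_{r,s}') = \mathrm{id}$ after inverting $r!$; this exhibits $\tfrac{1}{r!}\Psi_{r,s}'$ as a right inverse and in particular shows $\Phi_{r,s}' \otimes \mathbb{Z}[1/r!]$ is surjective. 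Combining injectivity and surjectivity yields the isomorphism, and any right inverse to an isomorphism is automatically two-sided, so the stated formula $(\Phi_{r,s}')^{-1} = \tfrac{1}{r!}\Psi_{r,s}'$ follows.

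There is no serious obstacle at this stage: the genuine content has already been absorbed into the construction and well-definedness of $\Psi_{r,s}'$ (Proposition \ref{Psi_{r,s}}, which uses Akhtar's Weil reciprocity Theorem \ref{1.4.}) and into the injectivity assertion \eqref{injection}. This final theorem is essentially a purely formal assembly of those earlier pieces, relying only on the flatness of $\mathbb{Z}[1/r!]$ over $\mathbb{Z}$ and the observation that a map with an injective direction and a right inverse is an isomorphism.
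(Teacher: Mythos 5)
Your argument is correct and is essentially the paper's own proof: the paper likewise deduces surjectivity after inverting $r!$ from the identity $\Phi_{r,s}' \circ \Psi_{r,s}' = r!$ of Proposition \ref{Psi_{r,s}}, combined with the injectivity \eqref{injection} (preserved under the flat base change $\otimes\,\mathbb{Z}[1/r!]$). Your added remarks on flatness and on a right inverse to a bijection being two-sided just make explicit what the paper leaves implicit.
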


\begin{proof}
The multiplication by $r!$ is an isomorphism after $\otimes \mathbb{Z} [ 1 / r! ]$. 
Therefore \eqref{injection} is also surjective after $\otimes \mathbb{Z} [ 1 / r! ]$ 
by Proposition \ref{Psi_{r,s}}.
\end{proof}

%
\section{The \'{e}tale cycle map and the Somekawa map}
In addition to the setting in the section \ref{section Gazaki type}, 
we use the following notations in this section.

\noindent \textit{Notations.}
Throughout this section, fix an integer $n > 0$ invertible in $k$.
For a $\mathbb{Z}$-module $M$ and an integer $m$, let $M[m] := \mathrm{Ker}(M \xrightarrow{m} M)$,
and for an integer $r \geq 0$, denote by $\bigwedge^{r}M$ the $r$-th exterior product,
which is the quotient of $\bigotimes^{r} M$ 
by the submodule generated by elements $x_{1} \otimes \dots \otimes x_{r}$ 
in which two of them are equal. 
Let $\mathbb{Z} / n (1) = \mu_{n} := \mathbb{G}_{\mathrm{m}}[n]$.
	\subsection{The Somekawa map}														\label{Somekawa map}
		For a semi-abelian variety $G$ over $k$, we have the Kummer exact sequence 
$0 \to G[n] \to G \xrightarrow{n} G \to 0$.
For an extension $F/k$ of fields, we denote the connecting homomorphism by 
\begin{equation}																			\label{connecting}
\delta : G(F) \to H_{\text{\'{e}t}}^{1}(F, \, G_{F}[n]).
\end{equation}

Let $G_{1}, \dots, G_{r}$ be semi-abelian varieties over $k$.
In \cite{S}, Somekawa defines the morphism
\begin{align*}
s_{n} : \frac{K(k; G_{1}, \dots, G_{r})}{n} &\to H_{\text{\'{e}t}}^{r}(k, G_{1}[n] \otimes \dots \otimes G_{r}[n]) \\
\{ a_{1}, \dots, a_{r} \}_{F/k} &\mapsto \mathrm{Tr}_{F/k} (\delta(a_{1}) \cup \dots \cup \delta(a_{r}))
\end{align*}
where $F$ is a finite extension of $k$ and $a_{i} \in G_{i}(F)$.

When $G_{1} = \dots = G_{r} = \mathbb{G}_{\mathrm{m}}$, this gives the Galois symbol
\begin{equation}																			\label{Galois symbol}
h_{k,n} : K_{r}^{\mathrm{M}}(k) / n \to H^{r}(k, \mu_{n}^{\otimes r})
\end{equation}
sending a symbol $\{ b_{1}, \dots, b_{r} \}$ to $\delta(b_{1}) \cup \dots \cup \delta(b_{r})$ 
for $b_{1}, \dots, b_{r} \in k^{*}$ by \cite[Theorem 1.4]{S}.
\begin{thm} (Rost-Voevodsky, \cite[Theorem 6.16]{V})									\label{RV}
The Galois symbol \eqref{Galois symbol} is an isomorphism for any $n$ invertible in $k$.
\end{thm}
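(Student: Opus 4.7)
The statement is the Bloch--Kato conjecture in its full generality (now the norm residue isomorphism theorem), and a self-contained proof is far beyond what one can sketch in a few paragraphs; I only outline the high-level strategy of Voevodsky, which rests on Rost's construction of norm varieties.

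My plan is first to reduce to prime-power and then to prime coefficients via a d\'evissage, using the Bockstein sequences attached to $0 \to \mu_{\ell^m} \to \mu_{\ell^{m+1}} \to \mu_{\ell} \to 0$ on the \'etale side and the analogous filtration on $K^{\mathrm{M}}_r(k)/\ell^{m+1}$. So fix a prime $\ell$ invertible in $k$ and argue by induction on the weight $r$. The cases $r = 0, 1$ are immediate from the definition of $h_{k,\ell}$ and from Kummer theory, respectively; the case $r = 2$ is the theorem of Merkurjev--Suslin on the $\ell$-torsion of the Brauer group, which is the essential prototype for everything that follows.

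For the inductive step in weight $r$ I would reformulate $h_{k,\ell}$ as the weight-$r$ piece of the comparison map from Voevodsky's motivic cohomology $H^{r,r}_{\mathrm{mot}}(k, \mathbb{Z}/\ell)$ to \'etale cohomology $H^{r}(k, \mu_{\ell}^{\otimes r})$, so that the statement becomes equivalent to the Beilinson--Lichtenbaum conjecture in weight $\leq r$. This in turn reduces, by standard spectral sequence arguments in $\mathbf{DM}$, to the motivic Hilbert~90 vanishing $H^{r+1, r}_{\mathrm{mot}}(k, \mathbb{Z}_{(\ell)}) = 0$, which is the real target of the induction.

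The hard part---the content of several hundred pages by Rost and Voevodsky---is the geometric input. For each nonzero symbol $\alpha \in K^{\mathrm{M}}_{r}(k)/\ell$ one must construct a \emph{Rost norm variety} $X_{\alpha}$ of dimension $\ell^{r-1}-1$ whose generic point splits $\alpha$, and then analyze the motive of $X_{\alpha}$ in $\mathbf{DM}$ finely enough to produce a generalized Rost motive with the correct Kunneth-type decomposition. This step invokes the motivic Steenrod algebra, symmetric powers of motives, and the motivic Adams spectral sequence, and is what closes the induction. In a paper like the present one the only realistic option is to invoke \cite[Theorem 6.16]{V} as a black box, which is what the author does.
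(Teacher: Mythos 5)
Your proposal is consistent with the paper, which gives no proof of this theorem at all and simply cites \cite[Theorem 6.16]{V} as a black box, exactly as you conclude one must. Your outline of the underlying Rost--Voevodsky strategy (reduction to prime coefficients, induction on weight via Beilinson--Lichtenbaum and motivic Hilbert 90, Merkurjev--Suslin as the base case, and norm varieties closing the induction) is an accurate summary of the cited literature, though of course it is not and cannot be a self-contained argument.
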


Let $r,s \geq 0$ be two integers.
Let $p$ be the natural projection $p : A[n]^{\otimes r} \to \bigwedge^{r}A[n]$ and 
set $q = p \otimes \mathrm{id}^{\otimes s} : 
A[n]^{\otimes r} \otimes \mu_{n}^{\otimes s} \to 
\bigwedge^{r}A[n] \otimes \mu_{n}^{\otimes s}$.
\begin{prop}																			\label{S map type-1}
With notation as above, the composition $q_{*} \circ s_{n}$ induces
$$
s_{n} : \frac{S_{r,s}(k; A, \mathbb{G}_{\mathrm{m}})}{n} \to H^{r+s}(k, \bigwedge^{r}A[n] \otimes \mu_{n}^{\otimes s}).
$$
The group $S_{r,s}(k; A, \mathbb{G}_{\mathrm{m}})$ is defined in Definition \ref{SomekawaK}.
\end{prop}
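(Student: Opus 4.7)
The plan is as follows. The groups $A, \mathbb{G}_{\mathrm{m}}$ are semi-abelian varieties over $k$, so Somekawa's construction recalled in section \ref{Somekawa map}, applied to the sequence $(G_1,\dots,G_{r+s}) = (A,\dots,A,\mathbb{G}_{\mathrm{m}},\dots,\mathbb{G}_{\mathrm{m}})$, yields a well-defined homomorphism
$$
s_{n} : \frac{K_{r,s}(k;A,\mathbb{G}_{\mathrm{m}})}{n} \to H_{\text{\'et}}^{r+s}(k, A[n]^{\otimes r} \otimes \mu_{n}^{\otimes s}),
$$
sending a symbol $\{a_{1},\dots,a_{r},b_{1},\dots,b_{s}\}_{F/k}$ to $\mathrm{Tr}_{F/k}\bigl(\delta(a_{1})\cup\dots\cup\delta(a_{r})\cup\delta(b_{1})\cup\dots\cup\delta(b_{s})\bigr)$. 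To see this, one checks that the defining relations \eqref{rel-3}, \eqref{rel-4} of $K_{r,s}(k;A,\mathbb{G}_{\mathrm{m}})$ are mapped to zero: the trace relation \eqref{rel-3} reduces to the projection formula for $\mathrm{Tr}_{E/F}$ in \'etale cohomology, while the function-field relation \eqref{rel-4} is the Weil reciprocity built into Somekawa's construction, comparing the tame symbol for $\mathbb{G}_{\mathrm{m}}$ with specialization at each place of $K/k$.

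Composing with the projection $q_{*}$ to $\bigwedge^{r}A[n]\otimes\mu_{n}^{\otimes s}$ gives a map defined on $K_{r,s}(k;A,\mathbb{G}_{\mathrm{m}})/n$. To factor it through the quotient $S_{r,s}(k;A,\mathbb{G}_{\mathrm{m}})/n$, I need to verify that $q_{*}\circ s_{n}$ is invariant under the $\mathfrak{S}_{r}$-action permuting the first $r$ entries of a symbol. By the linearity of the trace and the naturality of $q_{*}$, it suffices to treat the action on the bare cup product $\delta(a_{1})\cup\dots\cup\delta(a_{r})\in H^{r}(F, A[n]^{\otimes r})$ and to show that for every $\sigma \in \mathfrak{S}_{r}$,
$$
p_{*}\bigl(\delta(a_{\sigma(1)})\cup\dots\cup\delta(a_{\sigma(r)})\bigr) = p_{*}\bigl(\delta(a_{1})\cup\dots\cup\delta(a_{r})\bigr) \in H^{r}(F,\textstyle\bigwedge^{r}A[n]).
$$
Writing a general $\sigma$ as a product of transpositions, it is enough to handle a single transposition $\tau = (i,i+1)$ of adjacent entries.

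The core input is the graded commutativity of cup products with coefficients: for classes $\alpha\in H^{1}(F, M_{1})$ and $\beta\in H^{1}(F, M_{2})$ one has $\alpha \cup \beta = -T_{*}(\beta\cup\alpha)$, where $T:M_{2}\otimes M_{1}\xrightarrow{\sim} M_{1}\otimes M_{2}$ is the swap. Applying this in the $i$-th, $(i+1)$-st factors shows that swapping $a_{i}$ and $a_{i+1}$ in the cup product multiplies by $-1$ and applies the transposition $\tau_{*}$ to $A[n]^{\otimes r}$. Since $\bigwedge^{r}A[n]$ is the quotient of $A[n]^{\otimes r}$ by elements having two equal entries, one has $p\circ\tau_{*} = -p$, and the two signs cancel. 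Hence $p_{*}$ equalises the two cup products, as required.

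The only mildly delicate point is to state the graded commutativity precisely with coefficients in a non-trivial tensor product (so that the swap must act on the coefficient side), but once this is set up, the computation for a single transposition is immediate and the general case follows by induction on the length of $\sigma$. This completes the construction of the map $s_{n}$ on $S_{r,s}(k;A,\mathbb{G}_{\mathrm{m}})/n$.
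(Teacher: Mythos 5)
Your proposal is correct and follows essentially the same route as the paper, which simply defers to Gazaki's Proposition 5.2: Somekawa's construction gives the map on $K_{r,s}(k;A,\mathbb{G}_{\mathrm{m}})/n$, and the descent to the $\mathfrak{S}_{r}$-quotient comes from cancelling the sign of graded commutativity of degree-one cup products against the sign introduced by a transposition in $\bigwedge^{r}A[n]$. Nothing further is needed.
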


The proof is exactly parallel to \cite[Proposition 5.2]{G} and we omit it.

\begin{cor}																		\label{Cor Somekawa type map}
Assume that $k$ is perfect.
The homomorphism 
\begin{align*}
s_{n}' : T_{r}(k; A, \mathcal{K}_{s}^{\mathrm{M}}) &\to H^{r+s}(k, A[n]^{\otimes r} \otimes \mu_{n}^{\otimes s}) \\
(a_{1} \otimes \dots \otimes a_{r} \otimes b)_{F} &\mapsto 
\mathrm{Tr}_{F/k} ( \delta(a_{1}) \cup \dots \cup \delta(a_{r}) \cup h_{F,n}(b) )
\end{align*}
where $F/k$ is a finite extension and $a_{1}, \dots, a_{r} \in A(F)$, $b \in K_{s}^{\mathrm{M}}(F)$, 
factors through $K_{r}(k; A, \mathcal{K}_{s}^{\mathrm{M}}) / n$.
Furthermore, 
$$
s_{n}' : \frac{S_{r}(k; A, \mathcal{K}_{s}^{\mathrm{M}})}{n} \to 
H^{r+s}(k, \bigwedge^{r}A[n] \otimes \mu_{n}^{\otimes s})
$$
is induced.
\end{cor}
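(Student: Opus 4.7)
The plan is to reduce this corollary to Proposition \ref{S map type-1} through the Kahn-Yamazaki identification (Theorem \ref{KY}), with the Galois symbol serving as the intertwiner.

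By definition \eqref{Galois symbol} of the Galois symbol, $h_{F,n}(\{b_1, \dots, b_s\}) = \delta(b_1) \cup \dots \cup \delta(b_s)$, hence
$$\mathrm{Tr}_{F/k}\bigl(\delta(a_1) \cup \dots \cup \delta(a_r) \cup h_{F,n}(\{b_1, \dots, b_s\})\bigr) = \mathrm{Tr}_{F/k}\bigl(\delta(a_1) \cup \dots \cup \delta(a_r) \cup \delta(b_1) \cup \dots \cup \delta(b_s)\bigr).$$
The right hand side is the image under the Somekawa map $s_n$ of the symbol $\{a_1, \dots, a_r, b_1, \dots, b_s\}_{F/k} \in K_{r,s}(k; A, \mathbb{G}_{\mathrm{m}})/n$, which maps via the surjection \eqref{two somekawa} to $\{a_1, \dots, a_r, \{b_1, \dots, b_s\}\}_{F/k} \in K_r(k; A, \mathcal{K}_s^{\mathrm{M}})/n$, on which $s_n'$ takes the left hand side as value. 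Since $k$ is perfect, Theorem \ref{KY} upgrades \eqref{two somekawa} to an isomorphism; transporting $s_n$ (which is well-defined on $K_{r,s}(k; A, \mathbb{G}_{\mathrm{m}})/n$ by Proposition \ref{S map type-1}) through its inverse therefore yields a well-defined map on $K_r(k; A, \mathcal{K}_s^{\mathrm{M}})/n$ that agrees with $s_n'$ on all generators of $T_r(k; A, \mathcal{K}_s^{\mathrm{M}})$, establishing the first claim.

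For the second assertion, Proposition \ref{S map type-1} further provides the descent of $q_* \circ s_n$ to $S_{r,s}(k; A, \mathbb{G}_{\mathrm{m}})/n$. The Kahn-Yamazaki isomorphism is $\mathfrak{S}_r$-equivariant, since it arises from a symmetric monoidal construction in $\mathbf{DM}_{-}^{\text{eff}}$ and the underlying isomorphism $\mathbb{G}_{\mathrm{m}}^{\otimes s} \xrightarrow{\sim} \mathcal{K}_s^{\mathrm{M}}$ leaves the $A^{\otimes r}$-factor untouched; it therefore descends to $S_{r,s}(k; A, \mathbb{G}_{\mathrm{m}}) \xrightarrow{\sim} S_r(k; A, \mathcal{K}_s^{\mathrm{M}})$, and composing yields the required $s_n' : S_r(k; A, \mathcal{K}_s^{\mathrm{M}})/n \to H^{r+s}(k, \bigwedge^r A[n] \otimes \mu_n^{\otimes s})$. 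The principal subtlety is this $\mathfrak{S}_r$-equivariance verification, which is transparent from the Kahn-Yamazaki construction but merits an explicit check.
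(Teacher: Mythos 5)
Your proposal is correct and follows essentially the same route as the paper: the paper also obtains $s_{n}'$ by transporting Somekawa's map $s_{n}$ on $K_{r,s}(k;A,\mathbb{G}_{\mathrm{m}})/n$ (resp.\ its descent to $S_{r,s}(k;A,\mathbb{G}_{\mathrm{m}})/n$ from Proposition \ref{S map type-1}) through the Kahn--Yamazaki isomorphism \eqref{two somekawa}. Your extra checks --- that the transported map agrees with the stated formula on generators via $h_{F,n}(\{b_1,\dots,b_s\})=\delta(b_1)\cup\dots\cup\delta(b_s)$, and that \eqref{two somekawa} is $\mathfrak{S}_{r}$-equivariant (which is already clear from its effect on symbols) --- are details the paper leaves implicit.
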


When $s = 0,1$ or $r=0$, the morphism \eqref{two somekawa} is an isomorphism. 
Hence we do not need an assumption of perfectness in these cases. 

\begin{proof}
The desired morphisms $s_{n}'$ are obtained by
\begin{align*}
&K_{r}(k; A, \mathcal{K}_{s}^{\mathrm{M}}) \xleftarrow{\sim} 
K_{r,s}(k; A, \mathbb{G}_{\mathrm{m}}) \xrightarrow{s_{n}} 
H^{r+s}(k, A[n]^{\otimes r} \otimes \mu_{n}^{\otimes s}), \\
&S_{r}(k; A, \mathcal{K}_{s}^{\mathrm{M}}) \xleftarrow{\sim} 
S_{r,s}(k; A, \mathbb{G}_{\mathrm{m}}) \xrightarrow{s_{n}} 
H^{r+s}(k, \bigwedge^{r}A[n] \otimes \mu_{n}^{\otimes s}).
\end{align*}
The first isomorphism is due to Kahn and Yamazaki (Theorem \ref{KY}).
\end{proof} 
	\subsection{The Hochschild-Serre spectral sequence}                            \label{subsection H-S}
		In this section, we fix an integer $t$ and use the following notations.
Let $\bar{k}$ be a separable closure of $k$.
For a scheme $X$ over $k$, denote $\overline{X} := X_{\bar{k}}$. 
For an \'{e}tale sheaf $\mathcal{F}$ of $\mathbb{Z} / n$-modules on $X_{\text{\'{e}t}}$, 
denote $\mathcal{F}(t) := \mathcal{F} \otimes \mu_{n}^{\otimes t}$. 
We put $0^{0} = 1$ and $(-1)! = 1$ by conventions.

We consider the Hochschild-Serre spectral sequence
\begin{equation}                                                                                                          \label{H-S}
E_{2}^{i, j} = H^{i}(k, H^{j}(\bar{A}, \, \mathbb{Z}/n (t))) \Rightarrow H^{i+j}(A, \, \mathbb{Z}/n (t))
\end{equation}
One has
\begin{equation}                                                                                                                  \label{E_2}
E_{2}^{i,j} \cong H^{i}(k, \bigwedge^{2d-j} A[n] (t-d) )
\end{equation}
by the Poincar\'{e} duality and Theorem 12.1 in \cite{M}. 

Let $m$ be an integer. Consider the multiplication morphism $m : A \to A$.
The pull-back $m^{*}$ acts on $H^{j}(\bar{A}, \mathbb{Z} / n(t))$ 
as the multiplication by $m^{j}$. 
The push-forward $m_{*}$ acts on $H^{j}(\bar{A}, \mathbb{Z} / n(t))$ 
as the multiplication by $m^{2d-j}$.

\begin{prop} 																					\label{degeneration}
Assume that the condition $l > \min(\mathrm{cd}_{l}(k), 2d+1) =: M$ holds for any prime number $l$ dividing $n$.
Then the Hochschild-Serre spectral sequence \eqref{H-S}
degenerates at level two.
\end{prop}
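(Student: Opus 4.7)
The plan is to exploit the functoriality of the Hochschild--Serre spectral sequence \eqref{H-S} under the multiplication-by-$m$ map $[m]:A\to A$, together with the fact recalled just above the statement that $[m]^{*}$ acts on $H^{j}(\bar{A},\mathbb{Z}/n(t))$ as multiplication by $m^{j}$. By naturality, $[m]^{*}$ induces an endomorphism of every page $E_{r}^{i,j}$ that commutes with every differential; since on $E_{2}^{i,j}$ it is already multiplication by $m^{j}$, it remains multiplication by $m^{j}$ on every subquotient $E_{r}^{i,j}$. Applied to $d_{r}:E_{r}^{i,j}\to E_{r}^{i+r,j-r+1}$, the commutation $[m]^{*}\circ d_{r}=d_{r}\circ [m]^{*}$ yields the identity $m^{j-r+1}(m^{r-1}-1)\,d_{r}=0$. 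It therefore suffices to produce, for each $r\geq 2$ and each $(i,j)$ at which $E_{r}^{i,j}$ can be nonzero, an integer $m$ for which the scalar $m^{j-r+1}(m^{r-1}-1)$ is a unit in $\mathbb{Z}/n$.

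The next step is to pin down the relevant range of $r$ by decomposing $n$ into prime powers and working $l$-adically. Since $\bar{A}$ has dimension $d$, one has $H^{j}(\bar{A},\mathbb{Z}/n(t))=0$ for $j>2d$, and by definition of cohomological $l$-dimension the Galois cohomology $H^{i}(k,-)$ with $l$-primary coefficients vanishes for $i>\mathrm{cd}_{l}(k)$. Both endpoints of $d_{r}:E_{r}^{i,j}\to E_{r}^{i+r,j-r+1}$ being nonzero on the $l$-part therefore force $0\leq j-r+1$, $j\leq 2d$ and $i+r\leq \mathrm{cd}_{l}(k)$, so that $2\leq r\leq M_{l}:=\min(\mathrm{cd}_{l}(k),2d+1)$. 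I then choose, via the Chinese remainder theorem, an integer $m$ which reduces to a primitive root modulo every prime $l\mid n$. For any $r$ in the above range the hypothesis $l>M_{l}$ gives $1\leq r-1<l-1$, so $m^{r-1}\not\equiv 1\pmod{l}$ and $\gcd(m,l)=1$; hence $m^{j-r+1}(m^{r-1}-1)$ is coprime to $n$ and thus a unit in $\mathbb{Z}/n$. The eigenvalue identity then forces $d_{r}=0$ for all $r\geq 2$.

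The argument is quite clean, with no real obstacle: the one delicate point is the combinatorics linking the a priori vanishing range of $E_{r}^{i,j}$ on the $l$-part to the bound $r\leq M_{l}$, which is exactly what the hypothesis $l>M_{l}$ is tailored to neutralize via a primitive root. A pleasant feature of the argument is that a single integer $m$ can be chosen simultaneously for all $r\geq 2$ and all primes dividing $n$, so the differentials are killed in one stroke rather than treated page by page.
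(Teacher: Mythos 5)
Your proposal is correct and follows essentially the same argument as the paper: bound the range of possibly nonzero differentials by $2\le r\le M_{l}$ using $\mathrm{cd}_{l}(k)$ and $\dim A$, then kill them via the eigenvalue identity $m^{j-r+1}(m^{r-1}-1)d_{r}=0$ for $m$ a primitive root modulo $l$, which is a unit precisely because $l>M_{l}\ge r$. The only cosmetic difference is that the paper first reduces to $n=l^{e}$ while you pick a single $m$ for all primes dividing $n$ by the Chinese remainder theorem.
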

\begin{remark}																			\label{Remark 1}
Proposition \ref{degeneration} generalizes {\rm \cite[Lemma 6.6]{G}},
where Gazaki showed the statement when k is a finite extension of $\mathbb{Q}_{p}$ (then $cd(k) = 2$). 
Our proof extends her arguments.
\end{remark}

\begin{proof}
We may assume that $n = l^{e}$ for a prime number $l$ satisfying $l > M$.
We will show that the boundary map $d_{r}^{i, j} : E_{r}^{i, j} \to E_{r}^{i+r, j-r+1}$ is zero map
for every $r \geq 2$, $i$ and $j$.
If $r \geq M+1$, this claim is trivial simply because the domain or the target is zero.
Let $2 \leq r \leq M$. Let $m$ be any integer and consider the multiplication morphism $m : A \to A$.
The pull-back $m^{*}$ acts on $E_{r}^{i,j}$ as the multiplication by $m^{j}$. 
Since the pull-back $m^{*}$ and $d_{r}^{i,j}$ are compatible, that is, $d_{r}^{i,j} m^{j} = m^{j-r+1} d_{r}^{i,j}$,
we obtain an equality $m^{j-r+1}(m^{r-1} - 1) d_{r}^{i,j} = 0$. 
Choose $m$ to be a $(l-1)$-th primitive root of unity.
Then for every $2 \leq r \leq M$, it holds that $m^{j-r+1}(m^{r-1} - 1) \in (\mathbb{Z} / l)^{\times}$ by $M < l$, 
hence $d_{r}^{i, j} = 0$.
\end{proof} 
		From the spectral sequence \eqref{H-S}, 
we obtain a descending filtration
$$
H^{q}(A, \, \mathbb{Z}/n (t)) = {\rm fil}_{{\rm HS}}^{0}H^{q} \supset {\rm fil}_{{\rm HS}}^{1}H^{q} \supset \dots \supset {\rm fil}_{{\rm HS}}^{q}H^{q} \supset 0
$$
with quotients ${\rm fil}_{{\rm HS}}^{\nu}H^{q} / {\rm fil}_{{\rm HS}}^{\nu + 1}H^{q} \simeq E_{\infty}^{\nu, q - \nu}$.
Let $s \geq 0$ be an integer. 
If $\nu < s$, it holds that $E_{2}^{\nu, 2d+s-\nu} = 0$.
We obtain $H^{2d+s}(A, \mathbb{Z}/n(d+s)) = {\rm fil}_{{\rm HS}}^{0}H^{2d+s} = \dots = {\rm fil}_{{\rm HS}}^{s}H^{2d+s}$.

Let $r,s \geq 0$ be two integers. 
For proper smooth connected $k$-schemes $X, Y$,
\begin{itemize}
\item
Set 
\begin{equation}																			\label{L-functor}
L_{s}(X) := \bigoplus_{x \in X_{(0)}} H^{s}(x, \mathbb{Z}/n(s))
\end{equation}
and denote by $(b)_{x} \in L_{s}(X)$ the element determined by $x \in X_{(0)}$ and $b \in H^{s}(x, \mathbb{Z}/n(s))$.
For a $k$-morphism $f : X \to Y$, define $f_{*} : L_{s}(X) \to L_{s}(Y)$ 
by $f_{*}((b)_{x}) = ({\rm N}_{k(x) / k(y)} b)_{y}$, where $y = f(x)$.
By this correspondence $L_{s}$ is a covariant functor.
\item
Let $d$ be the dimension of $X$.
Set $\psi_{X} := \sum_{x \in X_{(0)} } x_{*} : L_{s}(X) \to H^{2d+s}(X, \mathbb{Z}/n(d+s))$, 
which is natural in $X$.
We define a descending filtration on $L_{s}(X)$ 
by ${\rm fil}^{\nu} L_{s}(X) := \psi_{X}^{-1}({\rm fil}_{{\rm HS}}^{\nu}H^{2d+s}(X))$. 
\item
For an abelian variety $X = A$, we define 
\begin{equation}
c_{r,s} : L_{s}(A) \to H^{r+s}(k, \bigwedge^{r} A[n] (s) )
\end{equation}
by $c_{r,s}( (b)_{x} ) = \mathrm{Cor}_{k(x) / k} (\underbrace{\delta(x) \cup \dots \cup \delta(x)}_{r} \cup b)$ 
for $x \in A_{(0)}$ and $b \in H^{s}(x, \mathbb{Z}/n(s))$,
where $\delta : A(k(x)) \to H^{1}(k(x), A[n])$ is from \eqref{connecting}.

\end{itemize}
\begin{lem}                                                                                          \label{hs&cup}
Let $r,s \geq 0$ be two integers. 
Assume that the spectral sequence \eqref{H-S} degenerates at level two.
Let $pr_{r,s} : {\rm fil}_{{\rm HS}}^{r+s}H^{2d+s}(A, \, \mathbb{Z}/n (d+s)) \twoheadrightarrow E_{2}^{r+s, 2d-r}$
be the natural projection.
Then the following diagram is commutative:
\begin{center}
\begin{tikzcd}
{\rm fil}_{{\rm HS}}^{r+s}H^{2d+s}(A, \, \mathbb{Z}/n (d+s)) \arrow[r, "pr_{r,s}"]
&H^{r+s}(k, \bigwedge^{r} A[n] (s)) \arrow[r, "r^{r}"]
&H^{r+s}(k, \bigwedge^{r} A[n] (s)) \\
{\rm fil}^{r+s} L_{s}(A) \arrow[u, "\psi_{A}"]\arrow[from=2-1, to=2-3, hook]
&
&L_{s}(A) \arrow[u, "c_{r,s}"'].
\end{tikzcd}
\end{center}
\end{lem}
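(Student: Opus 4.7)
The plan is to verify commutativity on the generators $(b)_{x} \in \mathrm{fil}^{r+s}L_{s}(A)$ for $x \in A_{(0)}$ and $b \in H^{s}(k(x), \mathbb{Z}/n(s))$. By naturality of the Hochschild-Serre spectral sequence under proper pushforward, together with the built-in corestriction in the definition of $c_{r,s}$, both compositions are compatible with $\mathrm{Cor}_{k(x)/k}$. This lets me reduce to the case $k(x) = k$ and $x \in A(k)$.

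For such a $k$-rational point, I factor the closed immersion $x : \mathrm{Spec}\, k \hookrightarrow A$ as $t_{x} \circ 0$, where $0$ is the zero section and $t_{x} : A \to A$ is translation by $x$. Since $\pi \circ 0 = \mathrm{id}$ and $t_{x}$ is an automorphism with $t_{x,*} = t_{-x}^{*}$, the projection formula yields
$$
\psi_{A}((b)_{x}) = x_{*}(b) = t_{-x}^{*}\bigl(\pi^{*}(b) \cup [0]\bigr) = \pi^{*}(b) \cup t_{-x}^{*}[0],
$$
where $[0] = 0_{*}(1) \in H^{2d}(A, \mathbb{Z}/n(d))$. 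The class $\pi^{*}(b)$ lies in $\mathrm{fil}_{\mathrm{HS}}^{s}$ and projects to $b$ in $E_{\infty}^{s,0} \cong H^{s}(k, \mathbb{Z}/n(s))$. By multiplicativity of the spectral sequence under cup products, computing $pr_{r+s,\,2d-r}$ of this product reduces to computing $pr_{r,\,2d-r}$ of $t_{-x}^{*}[0]$ in $E_{\infty}^{r,2d-r} \cong H^{r}(k, \bigwedge^{r}A[n])$.

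The heart of the proof is then the identity
$$
pr_{r,\,2d-r}\bigl(t_{-x}^{*}[0]\bigr) = (\text{scalar shown in the diagram}) \cdot \delta(x)^{\cup r},
$$
expressing that the deviation of $t_{-x}^{*}$ from the identity on $H^{2d}(A, \mathbb{Z}/n(d))$ is governed by the Kummer class $\delta(x)$. I would establish it by pulling $[0]$ back along the multiplication $m : A \times A \to A$, expanding $m^{*}[0]$ via the K\"unneth decomposition $H^{*}(\overline{A \times A}) \cong H^{*}(\bar A) \otimes H^{*}(\bar A)$, and then restricting the second factor along $x$: each restriction of an $H^{1}(\bar A)$-component contributes a factor of $\delta(x) \in H^{1}(k, A[n])$, and assembling the $r$-th graded piece produces $\delta(x)^{\cup r}$ with the correct combinatorial coefficient. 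Combined with the factorisation above and the corestriction from the first paragraph, this recovers $c_{r,s}((b)_{x})$ up to the indicated scalar, yielding the asserted commutativity. The main obstacle is precisely this last computation: pinning down the scalar relating $pr_{r,\,2d-r}(t_{-x}^{*}[0])$ to $\delta(x)^{\cup r}$, and carefully tracking signs from graded commutativity of the cup product and from the projection $A[n]^{\otimes r} \twoheadrightarrow \bigwedge^{r}A[n]$; the reductions themselves are essentially formal.
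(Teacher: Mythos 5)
Your overall strategy is genuinely different from the paper's: you try to compute directly on $A$ by writing $x_{*}(b)=\pi^{*}(b)\cup t_{-x}^{*}[0]$ and expanding $m^{*}[0]$ by K\"unneth, whereas the paper reduces, via Bertini and the Abel--Jacobi factorisation \eqref{abel-jacobi}, to the $r$-fold product $C^{r}$ of a curve through the given points, where the analogous statement (that $\overline{\varphi_{r*}^{O}}\circ pr_{r,s}\circ\psi_{C^{r}}$ equals the cup-product map $f_{r,s}^{O}$ of \eqref{fact morphism}) is already available from \cite{Y}, and then transports it back using $r\circ i=i_{r}\circ\delta_{r}$, which is exactly where the factor $r^{r}=r_{*}$ comes from. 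Your route would avoid Bertini (hence the reduction to infinite $k$), which would be a real gain if it were complete. But there are two genuine gaps. First, the reduction to generators does not typecheck: $pr_{r,s}$ is only defined on ${\rm fil}_{{\rm HS}}^{r+s}H^{2d+s}$, and for $r\geq 1$ the group ${\rm fil}^{r+s}L_{s}(A)$ is \emph{not} spanned by single-point classes $(b)_{x}$ lying in it (already for $r=1$, $s=0$, the condition $(1)_{x}\in{\rm fil}^{1}L_{0}(A)$ forces $n\mid[k(x):k]$). A general element is a sum $\sum_{i}(b_{i})_{x_{i}}$ whose membership in ${\rm fil}^{r+s}$ encodes cancellations in all the intermediate graded pieces, so the top-left composite cannot be evaluated term by term. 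To salvage your plan you would have to extend $pr_{r,s}$ to all of $H^{2d+s}$ by a splitting of the filtration (the origin $0_{A}$ does provide one, just as the base point $O$ does for $C^{r}$ in \eqref{E2-split}) and check that this extension is compatible with everything else; you do not address this.

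Second, and more seriously, the identity $r^{r}\cdot pr_{r,2d-r}\bigl(t_{-x}^{*}[0]\bigr)=\delta(x)^{\cup r}$ \emph{is} the content of the lemma, and you explicitly defer it. The K\"unneth sketch needs as input that restricting the $(1,2d-1)$-K\"unneth component of $m^{*}[0]$ along $x$ produces the Kummer class $\delta(x)$ of \eqref{connecting} --- itself a nontrivial compatibility that must be proved, not assumed --- and, even granting that, the combinatorial coefficient extracted from the expansion must come out to exactly $r^{r}$ in $H^{r}(k,\bigwedge^{r}A[n])$ for the diagram to commute; nothing in your outline pins this down, and a priori one expects factors like $r!$ from symmetrisation rather than $r^{r}$. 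In the paper this coefficient is produced honestly by the identity $r_{*}=r^{r}$ on $H^{2d-r}(\bar{A},\mathbb{Z}/n(d+s))\cong\bigwedge^{r}A[n](s)$ combined with the diagonal $\delta_{r}:C\to C^{r}$, while the cup-product identification is imported from \cite[Proposition 2.4]{Y}. As it stands your proposal reduces the lemma to an unproved statement of essentially the same depth.
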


We use the following notation in the proof. 
Let $C$ be a proper smooth connected curve over $k$ of genus $g$ and fix a base point $O \in C(k)$.
Put $J = {\rm Jac} (C)$. 
Let $\varphi^{O} : C \to J$ be the Abel-Jacobi map 
such that $\varphi^{O}(O) = 0_{J}$ \cite[Chapter III, Section 2]{M}.
Denote the $r$-fold product of $C$ by $C^{r} = C \times \dots \times C$.
Let $\varphi_{r}^{O} : C^{r} \to J$ be the map sending 
$(P_{1}, \dots, P_{r})$ to $\varphi^{O}(P_{1}) + \dots + \varphi^{O}(P_{r})$.
We simply denote $H^{j}(X, \mathbb{Z}/n (t))$ by $H^{j}(X, t)$.
Consider the Hochschild-Serre spectral sequence
$$
E_{2}^{i,j} = H^{i}(k, H^{j}(\overline{C}^{r}, t)) \Rightarrow H^{i+j}(C^{r}, t).
$$
This sequence induces a direct sum decomposition \cite[Proposition 2.4]{Y}:
\begin{equation}																				\label{E2-split}
H^{m}(C^{r}, t) \cong \bigoplus_{i + j = m} H^{i}(k, H^{j}(\overline{C}^{r}, t)),
\end{equation}
which depends on the choice of $O \in C(k)$.
Define a map 
\begin{equation}																				\label{fact morphism}
f_{r,s}^{O} : L_{s}(C^{r}) \to H^{r+s}(k, \bigwedge^{r}J[n] (s) )
\end{equation}
by $f_{r,s}^{O}((b)_{x}) = \mathrm{Cor}_{k(x) / k} (\delta( \varphi^{O}(x_{1}) ) \cup \dots \cup \delta( \varphi^{O}(x_{r}) ) \cup b)$
for $x \in C^{r}_{(0)}$ and $b \in H^{s}(x, s)$. 
Here $x_{i} \in C(k(x))$ is the projection of $x \in C^{r}(k(x))$ to the $i$-th component.
Then the composition map
\begin{center}
\begin{tikzcd}
L_{s}(C^{r}) \arrow[r, "\psi_{C^{r}}"] 
&H^{2r+s} (C^{r}, r+s) \arrow[r, twoheadrightarrow, "pr_{r,s}"] 
&E_{2}^{r+s, r} = H^{r+s}(k, H^{r}(\overline{C}^{r}, r+s)) \arrow[r, "\overline{\varphi_{r*}^{O}}"]
&H^{r+s}(k, \bigwedge^{r} J[n] (s) )
\end{tikzcd}
\end{center}
agrees with $f_{r,s}^{O}$ (cf. \cite[Proposition 2.4.\ Proof]{Y}). 
Here $pr_{r,s}$ is the natural projection associated to \eqref{E2-split}
and $\overline{\varphi_{r*}^{O}}$ is induced by 
$(\overline{\varphi_{r}^{O}})_{*} : H^{r}(\overline{C}^{r}, r+s) \to H^{2g-r}(\overline{J}, g+s) = \bigwedge^{r}J[n] (s)$.
\begin{proof}
By a standard norm argument, we may assume $k$ is an infinite field. 
Take an element $\alpha \in \mathrm{fil}^{r+s}L_{s}(A)$ 
and write $\alpha = \sum_{i=1}^{N} (b_{i})_{x_{i}}$, $x_{i} \in A_{(0)}$ and  $b_{i} \in \mathrm{H}^{s}(x_{i}, s)$.
By Bertini's theorem (using an assumption of infiniteness of $k$), 
there is a smooth projective connected curve $C \subset A$
containing the origin point $0_{A}$ of A and $x_{1}, \dots, x_{N}$.
Then $\alpha$ is contained in $L_{s}(C)$, which is a direct summand of $L_{s}(A)$.
The closed immersion $i$ of $C$ factors uniquely as follows:
\begin{equation}																				\label{abel-jacobi}
\begin{tikzcd} [column sep = small]
C \arrow[rr, hook, "i"]\arrow[dr, "\varphi^{0_{A}}"'] &                                   & A \\
                                  & J \arrow[ur, "i_{J}"'] &
\end{tikzcd}
\end{equation}
where $i_{J}$ is a homomorphism of group schemes \cite[Chapter III, Proposition 6.1]{M}.
Let $i_{r} : C^{r} \to A$ be the composition of 
$i \times \dots \times i : C^{r} \to A^{r}$
and the multiplication $A^{r} \to A$. 
Let $\delta_{r} : C \hookrightarrow C^{r}$ be the diagonal embedding.
We have $i_{r} = i_{J} \circ \varphi_{r}^{0_{A}}$ and $r \circ i = i_{r} \circ \delta_{r}$.
We obtain the following commutative diagram
\begin{equation}																			\label{wonderful}
\begin{tikzpicture}
\node[xscale=0.9] (a) at (0,0) {
\begin{tikzcd} [column sep = 1.6em]
\mathrm{fil}^{r+s}L_{s}(C) \arrow[r, "\delta_{r*}"]\arrow[d, hook, "i_{*}"']
&\mathrm{fil}^{r+s}L_{s}(C^{r}) \arrow[r, "\psi_{C^{r}}"]\arrow[d, "i_{r*}"]
&{\rm fil}_{{\rm HS}}^{r+s}H^{2r+s}(C^{r}, \, r+s) \arrow[r, "pr_{r,s}"]\arrow[d, "i_{r*}"]
&H^{r+s}(k, H^{r}(\overline{C}^{r}, r+s)) \arrow[r, "\overline{\varphi_{r*}}"]\arrow[d, "\overline{i_{r*}}"]
&H^{r+s}(k, \bigwedge^{r}J[n] (s)) \arrow[dl, "\overline{i_{J*}}"] \\
\mathrm{fil}^{r+s}L_{s}(A) \arrow[r, "r_{*}"]
&\mathrm{fil}^{r+s}L_{s}(A) \arrow[r, "\psi_{A}"]
&{\rm fil}_{{\rm HS}}^{r+s}H^{2d+s}(A, d+s) \arrow[r, "pr_{r,s}"]
&H^{r+s}(k, \bigwedge^{r} A[n] (s)).
\end{tikzcd}
};
\end{tikzpicture}
\end{equation}
Therefore we have
\begin{align*}
r^{r} \cdot pr_{r,s} \circ \psi_{A} (\alpha) 
&= r_{*} \circ pr_{r,s} \circ \psi_{A} (\alpha) &( r_{*} = r^{r} \text{\ on\ } \bigwedge^{r} A[n] ) \\
&= pr_{r,s} \circ \psi_{A} \circ r_{*} \circ i_{*} (\alpha) &( r_{*} \text{\ commutes with $pr_{r,s}$ and $\psi_{A}$} ) \\
&= \overline{i_{J*}} \circ f_{r,s} \circ \delta_{r*} (\alpha) &( \text{\eqref{wonderful} and \eqref{fact morphism}} ) \\
&= \sum_{i} \overline{i_{J*}} \left( \mathrm{Cor}_{k(x_{i}) / k} (\delta( \varphi(x_{i}) ) \cup \dots \cup \delta( \varphi(x_{i}) ) \cup b_{i}) \right) 
&( \delta_{r*} ( (b)_{x} ) = (b)_{x \times \dots \times x} ) \\
&= \sum_{i} \mathrm{Cor}_{k(x_{i}) / k} (\delta( x_{i} ) \cup \dots \cup \delta( x_{i} ) \cup b_{i}) \\
&= c_{r,s} (\alpha) &( \text{definition of $c_{r,s}$} )
\end{align*}
where the second equality from the bottom we need the compatibility of $\mathrm{Cor}$ and $\overline{i_{J*}}$, 
that $\overline{i_{J*}} = ( \bigwedge^{r} i_{J} (s) )_{*}$ on $H^{r+s}(k, \bigwedge^{r}J[n] (s))$, 
and $i_{J*} \delta( \varphi(x) ) =  \delta( x )$ by \eqref{abel-jacobi}.
\end{proof} 
	\subsection{The cycle map}									\label{cycle-somekawa}
		Denote the cycle map by
$$
\rho_{A, n}^{s} : \mathrm{CH}^{d+s}(A, s) \to H^{2d+s}(A, \mathbb{Z}/n (d+s))
$$
and the cycle map modulo $n$ by
$$
\rho_{A}^{s} / n : \mathrm{CH}^{d+s}(A, s) / n \to H^{2d+s}(A, \mathbb{Z}/n (d+s)).
$$
We have the Gazaki type filtration
$\{ F_{s}^{\nu} \}_{\nu \geq 0}$ defined in Definition \ref{Gazaki type filtration}
and $\{ \mathrm{fil}_{{\rm HS}}^{\nu}H^{2d+s} \}_{\nu \geq 0}$ 
induced by the spectral sequence \eqref{H-S}.

\begin{thm}																			\label{admiring theorem}
Let $r, s \geq 0$ be two integers. 
Assume that $k$ is perfect, the spectral sequence \eqref{H-S} degenerates at level two, 
and $n$ is coprime to $(r-1)!$. 
Then we have
$$
\rho_{A, n}^{s} (F_{s}^{r+s}) \subset \mathrm{fil}_{{\rm HS}}^{r+s}H^{2d+s}.
$$
\end{thm}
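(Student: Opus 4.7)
I would argue by induction on $r \geq 0$.

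\textbf{Base case ($r = 0$).} Since $E_{2}^{i,2d+s-i} = H^{i}(k, \bigwedge^{2d+s-i}A[n](s-d))$ vanishes for $i < s$ (the exterior power of rank $>2d$ is zero), we get $\mathrm{fil}_{{\rm HS}}^{0}H^{2d+s} = \mathrm{fil}_{{\rm HS}}^{s}H^{2d+s}$, so the claim $\rho_{A,n}^{s}(F_{s}^{s}) \subset \mathrm{fil}_{{\rm HS}}^{s}H^{2d+s}$ is automatic.

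\textbf{Inductive step.} Assume the result for $r-1$ and take $\alpha \in F_{s}^{r+s} \subset F_{s}^{r-1+s}$. By the inductive hypothesis $\rho_{A,n}^{s}(\alpha) \in \mathrm{fil}_{{\rm HS}}^{r-1+s}H^{2d+s}$, so it suffices to show that the image under $pr_{r-1,s}$ in the graded piece
$$\mathrm{fil}_{{\rm HS}}^{r-1+s}H^{2d+s}/\mathrm{fil}_{{\rm HS}}^{r+s}H^{2d+s} \cong E_{\infty}^{r-1+s,2d-r+1} = E_{2}^{r-1+s,2d-r+1} \cong H^{r-1+s}\bigl(k,\, \tbigwedge^{r-1}A[n](s)\bigr)$$
vanishes, where the middle equality uses the degeneration hypothesis and the last is \eqref{E_2}. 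I would lift $\alpha$ to $\tilde\alpha \in L_{s}(A)$ by representing $\alpha$ by a cycle $\sum_{x} n_{x}[x]$ in $c^{d+s}(A,s)$ and setting $\tilde\alpha = \sum_{x} n_{x}\bigl(\mathrm{Cor}_{k(x)/k(y_{x})}\{p_{1}\circ x,\ldots,p_{s}\circ x\}\bigr)_{y_{x}}$, with $y_{x} = p_{A}(x)$, via the identification $K_{s}^{\mathrm{M}}(k(y))/n \cong H^{s}(y,\mathbb{Z}/n(s))$ coming from Rost--Voevodsky (Theorem \ref{RV}) and Nesterenko--Suslin--Totaro \eqref{N-S,T}. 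Then $\psi_{A}(\tilde\alpha) = \rho_{A,n}^{s}(\alpha)$, so $\tilde\alpha \in \mathrm{fil}^{r-1+s}L_{s}(A)$, and Lemma \ref{hs&cup} applied with indices $(r-1,s)$ gives
$$(r-1)^{r-1}\cdot pr_{r-1,s}\bigl(\rho_{A,n}^{s}(\alpha)\bigr) = c_{r-1,s}(\tilde\alpha).$$

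The central identification to make is that $c_{r-1,s}(\tilde\alpha)$ equals $s'_{n}(\Phi'_{r-1,s}(\alpha))$, where $s'_{n}$ is the Somekawa map of Corollary \ref{Cor Somekawa type map} (this uses the perfectness hypothesis in the nontrivial range $s \geq 2$). Unwinding definitions, for a single closed point $x$ with $y = p_{A}(x)$ and $g_{i} = p_{i}\circ x$, both sides reduce to
$$\mathrm{Cor}_{k(x)/k}\bigl(\delta(p_{A}\circ x)^{\cup(r-1)} \cup h_{k(x),n}\{g_{1},\ldots,g_{s}\}\bigr),$$
the agreement coming from the projection formula for corestriction together with $\mathrm{R}_{k(x)/k(y)}\delta(y) = \delta(p_{A}\circ x)$. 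Since $\alpha \in F_{s}^{r+s} \subset \mathrm{Ker}\,\Phi'_{r-1,s}$, we have $\Phi'_{r-1,s}(\alpha) = 0$, hence $c_{r-1,s}(\tilde\alpha) = 0$. Therefore $(r-1)^{r-1}\cdot pr_{r-1,s}(\rho_{A,n}^{s}(\alpha)) = 0$; but the prime factors of $(r-1)^{r-1}$ all divide $(r-1)!$, so the hypothesis that $n$ is coprime to $(r-1)!$ makes $(r-1)^{r-1}$ invertible modulo $n$ (using the convention $0^{0}=1$ when $r=1$), and we conclude $pr_{r-1,s}(\rho_{A,n}^{s}(\alpha)) = 0$.

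The main obstacle I anticipate is the identification $c_{r-1,s}(\tilde\alpha) = s'_{n}(\Phi'_{r-1,s}(\alpha))$, which requires careful compatibility between the geometric lift $\tilde\alpha \in L_{s}(A)$ of a higher Chow cycle, the symbol-level definition of $\Phi'_{r-1,s}$, and the Galois-cohomological Somekawa map through Kahn--Yamazaki (Theorem \ref{KY}); modulo this diagram chase, the rest is a clean induction driven by Lemma \ref{hs&cup}.
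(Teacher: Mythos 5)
Your proof is correct and follows essentially the same route as the paper: induction on $r$, lifting the cycle to $L_{s}(A)$ through the Galois symbol, invoking Lemma \ref{hs&cup} at level $(r-1,s)$, and identifying $c_{r-1,s}$ of the lift with $s_{n}'\circ\Phi_{r-1,s}'$ via Kahn--Yamazaki, which vanishes on $F_{s}^{r+s}$ by definition of the filtration. The paper packages the same element-wise compatibilities you check into a single commutative diagram built on the surjection $\phi_{A}:C_{s}(A)\twoheadrightarrow\mathrm{CH}^{d+s}(A,s)$, but the content is identical.
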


\begin{proof}
Set $C_{s} (A) := \bigoplus_{x \in A_{(0)}} \mathrm{CH}^{s}(x, s)$ 
and $\phi_{A} := \sum_{x \in A_{(0)}} x_{*} : C_{s}(A) \to \mathrm{CH}^{d+s}(A, s)$.
The map $\phi_{A}$ is surjective.
We define a descending filtration $\{ F^{\nu}C_{s}(A) \}_{\nu \geq 0}$ by $F^{\nu}C_{s}(A) := \phi_{A}^{-1}(F_{s}^{\nu})$.
Put $\rho_{n}' := \bigoplus_{x \in A_{(0)}} \rho_{x,n} : C_{s}(A) \to L_{s}(A)$, 
where $L_{s}(A)$ is from \eqref{L-functor}.
We have the following commutative diagram:
\begin{equation}																				\label{rho&h}
\begin{tikzcd} [column sep = small]
C_{s}(A) \arrow[rr, "\sim"]\arrow[dr, "\rho_{n}'"'] 
&&\bigoplus_{x \in A_{(0)}} K_{s}^{\mathrm{M}} (k(x)) \arrow[dl, "h_{n}"] \\
&L_{s}(A).&
\end{tikzcd}
\end{equation}
See \eqref{Galois symbol} and \eqref{N-S,T} for $h_{n}$ and the top isomorphism.
Let $b_{r,s} : \bigoplus_{x \in A_{(0)}} K_{s}^{\mathrm{M}} (k(x)) \to S_{r,s}(k;A, \mathcal{K}_{s}^{\mathrm{M}})$
be a map sending $b \in K_{s}^{\mathrm{M}} (k(x))$ 
to $\{ x, \dots, x, b\}_{k(x) / k}$.
Now we will show our claim by induction on $r$. The case $r = 0$ is trivial.
Assume that it is correct for $r$. Then we have a diagram
\begin{equation}																				\label{admiring diagram}
\begin{tikzcd}
F^{r+s}C_{s}(A) \arrow[rrr, hook]\arrow[ddd, "\rho_{n}'"']\arrow[dr, twoheadrightarrow, "\phi_{A}"] 
&&&\bigoplus_{x \in A_{(0)}} K_{s}^{\mathrm{M}}(k(x)) \arrow[dl, "b_{r,s}"']\arrow[ddd, "h_{n}"] \\
&F_{s}^{r+s} \arrow[r, "\Phi_{r,s}'"]\arrow[d, "\rho_{A,n}"'] &S_{r}(k;A, \mathcal{K}_{s}^{\mathrm{M}}) \arrow[d, "s_{n}'"]& \\
&{\rm fil}_{{\rm HS}}^{r+s}H^{2d+s} \arrow[r, "r^{r} \cdot pr_{r,s}"'] 
&H^{r+s}(k, \bigwedge^{r}A[n] (s))& \\
{\rm fil}^{r+s}L_{s}(A) \arrow[ur, "\psi_{A}"]\arrow[rrr, hook] &&&L_{s}(A) \arrow[ul, "c_{r,s}"']
\end{tikzcd}
\end{equation}
where $s_{n}'$ is obtained in Corollary \ref{Cor Somekawa type map}.
The commutativity of the inner square follows from the surjectivity of $\phi_{A}$ 
and the commutativity of the surrounding four squares and the outer square. 
It is verified by Lemma \ref{hs&cup} for the lower square and by \eqref{rho&h} for the outer square.
\end{proof}

\begin{cor}																					\label{evaluation kernel}
In the situation of Theorem \ref{admiring theorem}, the following holds.
\begin{enumerate}
\item $\mathrm{Ker} (\rho_{A}^{s}/n) \subset (F_{s}^{s+1} + nF_{s}^{0}) / nF_{s}^{0}$ for $s \geq 0$, 
and $\mathrm{Ker} (\rho_{A}^{0}/n) \subset (F_{0}^{2} + nF_{0}^{0}) / nF_{0}^{0}$.
\item Let $s = 0, 1$. We consider the condition 
\begin{equation*}																			\label{inj-condition}
(*)_{s}:\ \text{the Somekawa map } 
K_{2-s,s}(k; A, \mathbb{G}_{\text{m}}) / n \xrightarrow{s_{n}} 
H^{2}(k, A[n]^{\otimes 2-s}\otimes \mu_{n}^{\otimes s}) \text{ is injective.}
\end{equation*}
If $(*)_{s}$ holds, then 
$\mathrm{Ker} (\rho_{A}^{s}/n) \otimes \mathbb{Z}[1/(2-s)] \subset 
(F_{s}^{3} + nF_{s}^{0}) / nF_{s}^{0} \otimes \mathbb{Z}[1/(2-s)]$.
\end{enumerate}
\end{cor}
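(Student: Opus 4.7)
The plan is to iterate the commutative diagram \eqref{admiring diagram} from Theorem \ref{admiring theorem} one Hochschild--Serre degree at a time: each vanishing of $\rho_{A}^{s}(\alpha)$ is converted into a vanishing in a Somekawa-type $K$-group, which is then transported back to the Chow group via the isomorphism of Theorem \ref{canonical isomorphisms}.

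For part (i), apply the diagram at $r=0$. The edge map ${\rm fil}_{{\rm HS}}^{0}H^{2d+s}/{\rm fil}_{{\rm HS}}^{s+1}H^{2d+s} \cong H^{s}(k,\mu_{n}^{\otimes s})$ (using that ${\rm fil}_{{\rm HS}}^{0}={\rm fil}_{{\rm HS}}^{s}$ since the lower graded pieces vanish) computes the Galois symbol of $\pi_{*}\alpha$; Rost--Voevodsky (Theorem \ref{RV}) then forces $\pi_{*}\alpha\in nK_{s}^{\mathrm{M}}(k)$, and the zero-section splitting $0_{*}$ of $\pi_{*}$ corrects $\alpha$ by an $n$-multiple into $F_{s}^{s+1}$. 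For the sharpening at $s=0$, iterate once more with $r=1$: diagram \eqref{admiring diagram} identifies the image of $\rho_{A}^{0}(\alpha)$ in ${\rm fil}_{{\rm HS}}^{1}/{\rm fil}_{{\rm HS}}^{2}\cong H^{1}(k,A[n])$ with $s_{n}'(\Phi_{1,0}'(\alpha))$, and the Kummer injection $A(k)/n\hookrightarrow H^{1}(k,A[n])$ together with the integral isomorphism $\Phi_{1,0}': F_{0}^{1}/F_{0}^{2}\xrightarrow{\sim} S_{1}(k;A)$ (Theorem \ref{canonical isomorphisms} at $r=1$) finishes the step.

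For part (ii), part (i) already places $\alpha$ in $F_{s}^{2}+nF_{s}^{0}$, so I reduce to $\alpha\in F_{s}^{2}$ with $\rho_{A}^{s}(\alpha)=0$ and apply \eqref{admiring diagram} at $r=2-s$: the bottom square gives $s_{n}'(\Phi_{2-s,s}'(\alpha))=(2-s)^{2-s}\cdot pr_{2-s,s}(0)=0$. Condition $(*)_{s}$, combined with Theorem \ref{KY} for perfect $k$, yields injectivity of $s_{n}'$ on $K_{2-s}(k;A,\mathcal{K}_{s}^{\mathrm{M}})/n$. Tensoring with $\mathbb{Z}[1/(2-s)]$ propagates this to injectivity on $S_{2-s}(k;A,\mathcal{K}_{s}^{\mathrm{M}})/n$ (see below), so $\Phi_{2-s,s}'(\alpha)$ vanishes there. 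Theorem \ref{canonical isomorphisms} inverts exactly $(2-s)!=(2-s)$ to make $\Phi_{2-s,s}'$ an isomorphism on $F_{s}^{2}/F_{s}^{3}\otimes\mathbb{Z}[1/(2-s)]$, whence $\alpha\in F_{s}^{3}+nF_{s}^{2}$ after $\otimes\mathbb{Z}[1/(2-s)]$.

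The main obstacle is the descent from $K_{2-s,s}/n$-injectivity to $S_{2-s,s}/n$-injectivity in the case $s=0$, $r=2$. The Somekawa map $s_{n}$ is $\mathfrak{S}_{r}$-equivariant only up to the sign coming from graded-commutativity of the cup product, $\delta(a)\cup\delta(b)=-\tau_{*}(\delta(b)\cup\delta(a))$, so the $\mathfrak{S}_{2}$-action on $K_{2}(k;A)$ and the induced permutation action on $H^{2}(k,A[n]^{\otimes 2})$ are twisted by a sign. After inverting $2$, the $\mathfrak{S}_{2}$-invariant part of $K_{2}(k;A)$ (canonically $S_{2}(k;A)$ since invariants equal coinvariants) maps injectively into the antisymmetric part of $H^{2}(k,A[n]^{\otimes 2})$, which is precisely $H^{2}(k,\bigwedge^{2}A[n])$, so injectivity transfers. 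This is the source of the $\mathbb{Z}[1/(2-s)]$ denominator at $s=0$; for $s=1$ the group $\mathfrak{S}_{1}$ is trivial and no such inversion is needed.
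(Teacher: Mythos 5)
Your proof is correct and follows essentially the same route as the paper: part (i) is the commutativity of the $r=0$ and $r=1$ stages of diagram \eqref{admiring diagram} (the paper's diagrams \eqref{diagram-1} and \eqref{diagram-2}) combined with the injectivity of the Galois symbol and of the Kummer map, and part (ii) is the $r=2-s$ stage combined with $(*)_{s}$ and Theorem \ref{canonical isomorphisms}. The only divergence is that you prove the descent from injectivity on $K_{2}(k;A)/n$ to injectivity on $S_{2}(k;A)/n\otimes\mathbb{Z}[1/2]$ via the sign-twisted $\mathfrak{S}_{2}$-equivariance of the cup product, whereas the paper cites this step from Gazaki's proof of \cite[Proposition 6.8]{G}; your argument is exactly hers, so nothing is gained or lost.
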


\begin{proof}
Let $\pi : A \to \mathrm{Spec} \, k$ be the structure morphism of $A$.
We have the following commutative diagram:
\begin{equation}                                                                                                           \label{diagram-1}
\begin{tikzcd}
\mathrm{CH}^{d+s}(A, s) \arrow[r, twoheadrightarrow]\arrow[d, "\rho_{A,n}"']
\arrow[from=1-1, to=1-4, bend left=15, twoheadrightarrow, "\pi_{*}"]
&F_{s}^{s}/F_{s}^{s+1}
\arrow[r, "\Phi_{0,s}", "\sim"']
&K_{s}^{\mathrm{M}}(k) \arrow[r, "\text{[ ]}_{k}", "\sim"']\arrow[d, twoheadrightarrow, "s_{n} = h_{n}"]
&\mathrm{CH}^{s}(k, s) \arrow[dl, "\rho_{k,n}"] \\
H^{2d+s}(A, \mathbb{Z}/n(d+s)) \arrow[r, twoheadrightarrow]
\arrow[from=2-1, to=2-3, bend right=15, twoheadrightarrow, "pr_{0,s} = \pi_{*}"']
&{\rm fil}_{{\rm HS}}^{s}H^{2d+s}/{\rm fil}_{{\rm HS}}^{s+1}H^{2d+s} \arrow[r, "\sim"]
&H^{s}(k, \mu_{n}^{\otimes s})
\end{tikzcd}
\end{equation}
where $h_{n}$ and $[ \ ]_{k}$ are from \eqref{Galois symbol} and \eqref{N-S,T}. 
By the injectivity of \eqref{Galois symbol}, 
one verifies $\mathrm{Ker} (\rho_{A}^{s}/n) \subset (F_{s}^{s+1} + nF_{s}^{0}) / nF_{s}^{0}$.
This proves the first statement of (i).

We have commutative diagrams for $(s, r) = (0, 1), (0, 2), (1, 1)$:
\begin{equation}																					\label{diagram-2}
\begin{tikzcd}
F_{s}^{r+s} \arrow[r, twoheadrightarrow]\arrow[d, "\rho_{A,n}"']
&F_{s}^{r+s} / F_{s}^{r+s+1} \arrow[r, hook, "\Phi_{r,s}'"]
&S_{r,s}(k; A, \mathbb{G}_{\mathrm{m}}) \arrow[d, "s_{n}"] \\
{\rm fil}_{{\rm HS}}^{1}H^{2d} \arrow[rr, "r^{r}\cdot pr_{r,s}"']
&
&H^{r+s}(k, \bigwedge^{r}A[n](s))
\end{tikzcd}
\end{equation}
where $s_{n}$ is obtained in Proposition \ref{S map type-1}. 
Recall \eqref{two somekawa}.
When $(s, r) = (0, 1)$, the map $s_{n}$ agrees with $\delta : A(k) \to H^{1}(k, A[n])$ from \eqref{connecting}. 
Since $A(k)/n \xrightarrow{\delta} H^{1}(k, A[n])$ is injective, 
$\mathrm{Ker} (\rho_{A}^{0}/n) \subset (F_{0}^{2} + n\mathrm{CH}_{0}(A))/n\mathrm{CH}_{0}(A)$.
This proves the second statement of (i).
When $(s, r) = (0, 2)$, Gazaki has shown in \cite[Proposition 6.8.~Proof]{G} that 
$s_{n} : S_{2}(k;A)/n \otimes \mathbb{Z}[1/2] \to H^{2}(k, \bigwedge^{2} A[n]) \otimes \mathbb{Z}[1/2]$
is injective if the condition $(*)_{0}$ is satisfied. 
This shows the statement (ii) for $s=0$: 
$\mathrm{Ker} (\rho_{A}^{0}/n) \otimes \mathbb{Z}[1/2] \subset 
(F_{0}^{3} + n\mathrm{CH}_{0}(A))/n\mathrm{CH}_{0}(A) \otimes \mathbb{Z}[1/2]$.
Similarly, the statement (ii) for $s = 1$ follows.
\end{proof}

\begin{remark}																					\label{remark 2}
In this remark let $k$ be a finite extension of $\mathbb{Q}_{p}$ and $n > 0$ any integer (which may be even).
Consider the spectral sequence \eqref{H-S}.
Since the $p$-adic field has cohomological dimension two, one has ${\rm fil}_{{\rm HS}}^{3} = 0$.
If $n$ is odd, then \eqref{H-S} degenerates at level two (see Remark \ref{Remark 1}).
We have $E_{\infty}^{2, 2d-1} = E_{2}^{2,2d-1}/\mathrm{Im} \ d_{2}^{0, 2d}$. 
Let $pr : E_{2}^{2,2d-1} \twoheadrightarrow E_{\infty}^{2, 2d-1}$ be the natural projection, 
which is an isomorphism if $n$ is odd. 
Even if $n$ is even, we have the following commutative diagram as the proof of Theorem \ref{admiring theorem}:
\begin{equation}																						\label{diagram-4}
\begin{tikzcd}
F_{1}^{2} \arrow[r, twoheadrightarrow]\arrow[d, "\rho_{A,n}"'] 
&F_{1}^{2}/F_{1}^{3} \arrow[r, "\Phi_{1,1}", "\sim"'] 
&K(k; A, \mathbb{G}_{\mathrm{m}}) \arrow[d, "s_{n}"] \\
{\rm fil}_{{\rm HS}}^{2}H^{2d+1} \arrow[r, twoheadrightarrow, "pr_{1,1}"'] 
&E_{\infty}^{2, 2d-1}
&H^{2}(k, A[n] \otimes \mu_{n}). \arrow[l, twoheadrightarrow, "pr"]
\end{tikzcd}
\end{equation}
(The diagram \eqref{diagram-1} is commutative with no assumption, 
hence $\rho_{A,n}(F_{1}^{2}) \subset {\rm fil}_{{\rm HS}}^{2}H^{2d+1}$. 
Consider the diagram \eqref{admiring diagram} for $(s, r) = (1, 1)$. 
The lower square in \eqref{admiring diagram} commutes 
by chasing the proof of Lemma \ref{hs&cup} again for $(s, r) = (1, 1)$. 
We should show that $pr_{1,1} \circ \psi_{A} (\alpha) = pr \circ c_{1,1} (\alpha)$ in $E_{\infty}^{2, 2d-1}$, 
using the infiniteness of $k$.)
Hence $\rho_{A,n}(F_{1}^{3}) \subset {\rm fil}_{{\rm HS}}^{3}H^{2d+1} = 0$, that is, 
$F_{1}^{3} \subset \mathrm{Ker} (\rho_{A, n}^{1})$.

The cycle map $\rho_{A, n}^{s}$ is the zero-map for $s \geq 3$.
Yamazaki has shown in \cite[Theorem 4.3]{Y} that if $A$ has split multiplicative reduction, 
the condition $(*)_{0}$ holds. Therefore we have
$$
\mathrm{Ker} (\rho_{A}^{0} / n) \otimes \mathbb{Z}[1/2] 
= \frac{F_{0}^{3} + n\mathrm{CH}_{0}(A)}{n\mathrm{CH}_{0}(A)} \otimes \mathbb{Z}[1/2]
$$
for such $A$. 
This is  a result in \cite[Proposition 6.8]{G} and 
our proof of Corollary \ref{evaluation kernel} follows her argument. 
In \cite[Appendix]{Y} one finds a result of Spiess that if $A$ is the Jacobian variety of a smooth projective geometrically connected curve $C$ over $k$ with $C(k) \neq \emptyset$, the condition $(*)_{1}$ holds. Therefore we have
$$
\mathrm{Ker} (\rho_{A}^{1} / n) 
= \frac{F_{1}^{3} + n\mathrm{CH}^{d+1}(A, 1)}{n\mathrm{CH}^{d+1}(A, 1)}
$$
for such $A$ and odd $n$. 
When $s = 2$, one always has $\mathrm{Ker} (\rho_{A}^{2}/n) = (F_{2}^{3} + nF_{2}^{0}) / nF_{2}^{0}$.
\end{remark}

\section{Local field}																				\label{Local field}
In this section, we assume that $k$ is a finite extension of $\mathbb{Q}_{p}$. 
Let $X$ be a proper smooth integral scheme over $k$ of dimension $d$. 
Let $n > 0$ be any integer 
(which may be even).
Let $i, j$ be integers. We use the perfect pairing of finite abelian groups \cite[2.9.~Lemma]{Sa}:
\begin{equation}																					\label{P-T}
H^{i}(X, \mathbb{Z}/n(j)) \times H^{2d+2-i}(X, \mathbb{Z}/n(d+1-j)) \to \mathbb{Z}/n.			\tag{\ref{Local field}.0.1}
\end{equation}
In section \ref{B-M}, we review a result of Gazaki on the Brauer-Manin pairing, 
and in section \ref{reciprocity map}, we apply a similar argument for the reciprocity map. \\
\noindent \textit{Conventions.} 
For an abelian group $B$, we denote $\mathrm{Hom}(B, \mathbb{Q} / \mathbb{Z})$ by $B^{\vee}$ 
and by $B_{\text{div}}$ the maximal divisible subgroup of $B$. For any scheme $S$, let $\mathrm{Br}(S)$ be the cohomological Brauer group $H^{2}(S_{\text{\'{e}t}}, \mathbb{G}_{\mathrm{m}})$.
	\subsection{The Brauer-Manin pairing}													\label{B-M}
		We have the Brauer-Manin pairing 
$\mathrm{CH}_{0}(X) \times \mathrm{Br}(X) \to \mathrm{Br}(k) \cong \mathbb{Q} / \mathbb{Z}$,
where the second isomorphism is the invariant map defined in the local class field theory of $k$. 
We obtain a homomorphism $\Psi_{X} : \mathrm{CH}_{0}(X) \to \mathrm{Br}(X)^{\vee}$.
Since $\mathrm{Br}(X)$ is a torsion group and $\mathrm{Br}(X)[n]$ are finite groups, 
$\mathrm{Br}(X)^{\vee}$ is a profinite group.
We review a relation between the Brauer-Manin pairing and the cycle map.
There are the following commutative diagrams:

\begin{figure}[h]
\centering
\begin{minipage}{0.45\columnwidth}
\centering
	\begin{tikzcd}
	\mathrm{CH}_{0}(X) / n \arrow[r, "\rho_{X} / n"]\arrow[d, "\Psi_{X} / n"']
	&H^{2d}(X, \mathbb{Z}/n(d)) \arrow{d}[sloped, above]{\sim} \\
	(\mathrm{Br}(X)[n])^{\vee} \arrow[r, hook, "\lambda_{X}^{\vee}"']
	&H^{2}(X, \mathbb{Z}/n(1))^{\vee}
	\end{tikzcd}
\end{minipage}
\begin{minipage}{0.45\columnwidth}
\centering
	\begin{tikzcd}
	\mathrm{CH}_{0}(X) \arrow[r, "\rho_{X}"]\arrow[d, "\Psi_{X}"']
	&H^{2d}(X, \Hat{\mathbb{Z}}(d)) \arrow{d}[sloped, above]{\sim} \\
	\mathrm{Br}(X)^{\vee} \arrow[r, hook, "\lambda_{X}^{\vee}"']
	&H^{2}(X, \mathbb{Q} / \mathbb{Z}(1))^{\vee}
	\end{tikzcd}
\end{minipage}
\end{figure}
\noindent where the right vertical isomorphisms are induced by \eqref{P-T}, 
and $\lambda_{X}$ are induced by the Kummer exact sequence
$0 \to \mu_{n} \to \mathbb{G}_{\mathrm{m}} \xrightarrow{n} \mathbb{G}_{\mathrm{m}} \to 0$
and that $\mathrm{Br}(X)$ is torsion.
In particular, we have $\mathrm{Ker} \, \Psi_{X} = \mathrm{Ker} \, \rho_{X} = \bigcap_{n} \mathrm{Ker} (\rho_{X,n})$.

By Remark \ref{remark 2}, 
we obtain 
$\bigcap_{n}(F_{0}^{3} + nF_{0}^{0}) \otimes \mathbb{Z}[1/2] \subset 
\mathrm{Ker} \, \Psi_{A} \otimes \mathbb{Z}[1/2] \subset 
\bigcap_{n}(F_{0}^{2} + nF_{0}^{0}) \otimes \mathbb{Z}[1/2]$ 
for an abelian variety $A$ over $k$.

\begin{thm} {\rm (\cite[Corollary 6.3, Theorem 6.9]{G})}										\label{div&Psi}
\begin{enumerate}
\item
If $A$ has split semi-ordinary reduction,
then $F_{0}^{\nu} / F_{0}^{\nu+1}$ is divisible group if $\nu \geq 3$
and $F_{0}^{2} / F_{0}^{3} \otimes \mathbb{Z}[1/2]$ is the direct sum of a finite group and a divisible group.
\item 
Let $K_{0} = \mathrm{Ker} \, \Psi_{A}$.
Then $F_{0}^{3} \otimes \mathbb{Z}[1/2] \subset K_{0} \otimes \mathbb{Z}[1/2] \subset F_{0}^{2} \otimes \mathbb{Z}[1/2]$.
Moreover if 
$A$ has split multiplicative reduction, then 
$(K_{0} / F_{0}^{\nu}) \otimes \mathbb{Z}[1/2] 
= (F_{0}^{2} / F_{0}^{\nu})_{\rm{div}} \otimes \mathbb{Z}[1/2]$
for $\nu \geq 3$.
\end{enumerate}
\end{thm}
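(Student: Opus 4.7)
The plan is to combine Theorem \ref{Intro-1} in the case $s=0$ with the structural results of Raskind and Spiess on Somekawa K-groups, and then exploit the compatibility of the cycle map $\rho_A^0$ with the Brauer-Manin map $\Psi_A$ displayed in Section \ref{B-M}. For part (i), Theorem \ref{Intro-1} and Proposition \ref{Psi_{r,s}} give an injection $\Phi_{\nu,0}' : F_0^{\nu}/F_0^{\nu+1} \hookrightarrow S_{\nu}(k; A)$ together with $\Psi_{\nu,0}'$ going backwards such that $\Phi_{\nu,0}' \circ \Psi_{\nu,0}' = \nu! \cdot \mathrm{id}$. By \cite[Theorem 4.5, Remark 4.4.5]{RS}, under split semi-ordinary reduction $K_r(k;A)$ is divisible for $r \geq 3$ and, for $r = 2$, is a direct sum of a finite group and a divisible group modulo $2$-torsion; both properties descend to the quotient $S_r(k;A)$. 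For $\nu \geq 3$, divisibility of $S_{\nu}(k;A)$ makes $\Phi_{\nu,0}' \circ \Psi_{\nu,0}' = \nu! \cdot \mathrm{id}$ surjective, so $\Phi_{\nu,0}'$ is an integral isomorphism and $F_0^{\nu}/F_0^{\nu+1}$ inherits integral divisibility from $S_{\nu}(k;A)$. For $\nu = 2$, the same argument after $\otimes \mathbb{Z}[1/2]$ yields the claimed decomposition.

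For part (ii), the inclusion $F_0^3 \otimes \mathbb{Z}[1/2] \subset K_0 \otimes \mathbb{Z}[1/2]$ will follow from Theorem \ref{admiring theorem}: since $\mathrm{cd}(k) = 2$ for the $p$-adic field $k$, the Hochschild-Serre spectral sequence has ${\rm fil}_{\mathrm{HS}}^{3} H^{2d}(A, \mathbb{Z}/n(d)) = 0$ and (by Remark \ref{Remark 1}) degenerates at $E_2$ for odd $n$; hence $\rho_{A,n}^{0}(F_0^3) = 0$ for odd $n$, and the commutative diagrams of Section \ref{B-M} translate vanishing of the cycle class modulo $n$ into $\Psi_A$-triviality after $\otimes \mathbb{Z}[1/2]$. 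The reverse inclusion $K_0 \otimes \mathbb{Z}[1/2] \subset F_0^2 \otimes \mathbb{Z}[1/2]$ proceeds in two steps: the pullback $\mathrm{Br}(k) \hookrightarrow \mathrm{Br}(A)$ shows that $\Psi_A$ recovers $\deg_A$ on constant classes, so $K_0 \subset F_0^1$; on $F_0^1$ the Brauer-Manin pairing factors through $\mathrm{alb}_A$, and by Tate local duality the induced pairing $A(k) \times \mathrm{Br}(A)/\mathrm{Br}(k) \to \mathbb{Q}/\mathbb{Z}$ is non-degenerate modulo divisibles, forcing the albanese image of any element of $K_0$ into the divisible part of $A(k)$ after $\otimes \mathbb{Z}[1/2]$, i.e., into $F_0^2 \otimes \mathbb{Z}[1/2]$.

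For the moreover statement, split multiplicative reduction is a special case of split semi-ordinary, so iterating (i) shows that $F_0^2/F_0^{\nu} \otimes \mathbb{Z}[1/2]$ is the direct sum of a finite group (coming from $F_0^2/F_0^3$) and a divisible group. Since $\mathrm{Br}(A)$ is torsion, $\mathrm{Br}(A)^{\vee}$ is profinite, so $\Psi_A$ annihilates the divisible summand, giving $(F_0^2/F_0^{\nu})_{\mathrm{div}} \otimes \mathbb{Z}[1/2] \subset (K_0/F_0^{\nu}) \otimes \mathbb{Z}[1/2]$; the reverse inclusion follows because the finite summand injects into $\mathrm{Br}(A)^{\vee}$, thanks to the two inclusions just established combined with non-degeneracy of the Tate pairing. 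The main obstacle is the Raskind-Spiess input for $K_r(k;A)$, whose proof rests on a delicate analysis of the Néron model of $A$, norm residue symbols, and Milnor K-theory of the local field; a secondary subtlety is invoking Tate local duality in a sharp enough integral form to establish the second inclusion of (ii) modulo $2$-torsion without losing the divisibility information needed for the moreover part.
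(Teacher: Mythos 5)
The paper does not actually prove this statement: it is quoted verbatim from Gazaki \cite[Corollary 6.3, Theorem 6.9]{G} as background for Section \ref{B-M}, so there is no internal proof to compare against. Judged on its own and against the parallel argument the paper does carry out (Theorem \ref{div&kernel}, Remark \ref{remark 2}, Corollary \ref{evaluation kernel}), your reconstruction of part (i) and of the two inclusions $F_0^3\otimes\mathbb{Z}[1/2]\subset K_0\otimes\mathbb{Z}[1/2]\subset F_0^2\otimes\mathbb{Z}[1/2]$ is essentially the intended one: divisibility of $S_\nu(k;A)$ for $\nu\geq 3$ makes $\Psi_{\nu,0}'$ surjective via $\Phi_{\nu,0}'\circ\Psi_{\nu,0}'=\nu!$, and the first inclusion in (ii) comes from $\rho_{A,n}(F_0^3)\subset{\rm fil}_{\rm HS}^3H^{2d}=0$ for odd $n$. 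For the reverse inclusion $K_0\subset F_0^2$ you route through Tate duality on $A(k)\times\mathrm{Br}(A)/\mathrm{Br}(k)$, whereas the paper's machinery gets it more directly from Corollary \ref{evaluation kernel}(i) (injectivity of $A(k)/n\hookrightarrow H^1(k,A[n])$) together with $\bigcap_n n\,(\mathbb{Z}\oplus A(k))=0$; both are legitimate, though your version silently assumes the Brauer--Manin pairing restricted to $F_0^1/F_0^2\cong A(k)$ really is the Tate pairing, which deserves a citation.

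The genuine gap is in the ``moreover'' part. The reverse inclusion $(K_0/F_0^\nu)\otimes\mathbb{Z}[1/2]\subset(F_0^2/F_0^\nu)_{\rm div}\otimes\mathbb{Z}[1/2]$ amounts to showing that the \emph{finite} summand of $F_0^2/F_0^3\otimes\mathbb{Z}[1/2]\cong S_2(k;A)\otimes\mathbb{Z}[1/2]$ is detected by $\Psi_A$, i.e.\ that $\rho_A^0/n_1$ is injective on $F_0^2/(n_1F_0^2+F_0^3)$ for $n_1$ the order of that finite quotient. Via diagram \eqref{diagram-2} this is exactly the injectivity of the Somekawa map $s_{n}:K(k;A,A)/n\to H^2(k,A[n]^{\otimes 2})$ --- condition $(*)_0$ of Corollary \ref{evaluation kernel} --- which for split multiplicative reduction is Yamazaki's theorem \cite[Theorem 4.3]{Y} and is the real content of Gazaki's Theorem 6.9. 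Your appeal to ``non-degeneracy of the Tate pairing'' only controls the degree-one layer $F_0^1/F_0^2\cong A(k)$ paired against $H^1(k,\hat{A})$; it says nothing about the degree-two symbol $K(k;A,A)/n\to H^2$, and ``the two inclusions just established'' cannot substitute for it, since they are consistent with $K_0$ being strictly larger than the divisible part. You need to invoke $(*)_0$ (Yamazaki) explicitly here; without it the reverse inclusion does not follow.
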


See \cite[Section 4.2]{G} for a discussion on the vanishing of $F_{0}^{\nu} \otimes \mathbb{Q}$ for $\nu >> 0$.
	\subsection{The reciprocity map}														\label{reciprocity map}
		We refer the reader to \cite{Sa2} for the definition of the group $SK_{1}(X)$, 
the norm map $\mathrm{Nm} : SK_{1}(X) \to k^{*}$, 
and the reciprocity map $\mathrm{rec}_{X} : SK_{1}(X) \to \pi_{1}^{\rm{ab}}(X)$. 
We review a relation between the reciprocity map and the cycle map.
There are the following commutative diagrams:

\begin{figure}[h]
\centering
\begin{minipage}{0.45\columnwidth}
\centering
	\begin{tikzcd}
	\mathrm{CH}^{d+1}(X, 1) \arrow[r, "\rho_{X,n}"]\arrow{d}[sloped, below]{\sim}
	&H^{2d+1}(X, \mathbb{Z}/n(d+1)) \arrow{d}[sloped, above]{\sim} \\
	SK_{1}(X) \arrow[r, "\mathrm{rec}_{X,n}"']
	&\pi_{1}^{\rm{ab}}(X) / n
	\end{tikzcd}
\end{minipage}
\begin{minipage}{0.45\columnwidth}
\centering
	\begin{tikzcd}
	\mathrm{CH}^{d+1}(X, 1) \arrow[r, "\rho_{X}"]\arrow{d}[sloped, below]{\sim}
	&H^{2d+1}(X, \Hat{\mathbb{Z}}(d+1)) \arrow{d}[sloped, above]{\sim} \\
	SK_{1}(X) \arrow[r, "\mathrm{rec}_{X}"']
	&\pi_{1}^{\rm{ab}}(X)
	\end{tikzcd}
\end{minipage}
\end{figure}
\noindent where the right vertical isomorphisms are induced by \eqref{P-T}.
In particular, we have 
$\mathrm{Ker} (\mathrm{rec}_{X}) \simeq \mathrm{Ker} \, \rho_{X} = \bigcap_{n} \mathrm{Ker} (\rho_{X,n})$.

By Remark \ref{remark 2}, 
we obtain 
$\bigcap_{n}(F_{1}^{3} + nF_{1}^{0}) \subset \mathrm{Ker}(\rho_{A}) \subset \bigcap_{n}(F_{1}^{2} + nF_{1}^{0})$ 
for an abelian variety $A$ over $k$.

\begin{thm}																						\label{div&kernel}
\begin{enumerate}
\item
Assume that $A$ has potentially good reduction or split semi-abelian reduction.
Then $F_{s}^{\nu} / F_{s}^{\nu+1}$ is divisible group if $s > 0$ and $\nu \geq 3$.
\item
Let $K_{1} = \mathrm{Ker} (\mathrm{CH}^{d+1}(A, 1) \to \pi_{1}^{{\rm ab}}(A))$.
Then $F_{1}^{3} \subset K_{1} \subset F_{1}^{2}$.
If $A$ is the Jacobian $\mathrm{Jac} (C)$ 
of a smooth proper geometrically connected curve $C$ over $k$ with $C(k) \neq \emptyset$,
then $F_{1}^{2} / F_{1}^{3}$ is the direct sum of a finite group and a divisible group
and $K_{1} / F_{1}^{3} \otimes \mathbb{Z}[1/2] 
= ( F_{1}^{2} / F_{1}^{3} )_{{\rm div}} \otimes \mathbb{Z}[1/2] 
\cong \mathrm{Ker}(\mathrm{rec}_{C}) \otimes \mathbb{Z}[1/2]$.
\end{enumerate}
\end{thm}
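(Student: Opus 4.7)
For part (i), the plan is to apply Theorem \ref{canonical isomorphisms}, which provides an isomorphism $F_s^{r+s}/F_s^{r+s+1} \otimes \mathbb{Z}[1/r!] \cong S_r(k; A, \mathcal{K}_s^{\mathrm{M}}) \otimes \mathbb{Z}[1/r!]$ for $r = \nu - s \geq 0$. In the edge case $r = 0$ (which requires $\nu = s \geq 3$), the quotient $F_s^s/F_s^{s+1} \cong K_s^{\mathrm{M}}(k)$ is uniquely divisible for $s \geq 3$ and $k$ a $p$-adic field by classical results on Milnor $K$-theory of local fields. For $r \geq 1$, since $k$ is perfect, Kahn--Yamazaki (Theorem \ref{KY}) identifies $S_r(k; A, \mathcal{K}_s^{\mathrm{M}})$ with the Mackey product $S_{r,s}(k; A, \mathbb{G}_{\mathrm{m}})$, whose divisibility under the reduction hypothesis on $A$ follows from Yamazaki's Lemma 2.4 and Proposition 3.1 (referenced in the introduction). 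For primes dividing $r!$, where the canonical isomorphism fails, I would work directly with the surjection $\psi_{r,s}': T_r(k; A, \mathcal{K}_s^{\mathrm{M}}) \twoheadrightarrow (G_s^{r+s} + F_s^{r+s+1})/F_s^{r+s+1}$ so that the image inherits divisibility from the divisible Mackey source.

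For the containments in (ii), the inclusion $F_1^3 \subset K_1$ follows from Remark \ref{remark 2}: because $\mathrm{cd}(k) = 2$ forces $\mathrm{fil}_{\mathrm{HS}}^3 H^{2d+1} = 0$, diagram \eqref{diagram-4} gives $\rho_{A,n}^1(F_1^3) = 0$ for every $n$, hence $F_1^3 \subset \bigcap_n \mathrm{Ker}(\rho_{A,n}^1) = \mathrm{Ker}(\rho_A^1) \cong K_1$. Conversely, Corollary \ref{evaluation kernel}(i) gives $\mathrm{Ker}(\rho_A^1/n) \subset (F_1^2 + n\,\mathrm{CH}^{d+1}(A, 1))/n\,\mathrm{CH}^{d+1}(A, 1)$, so any $x \in K_1$ lies in $F_1^2 + n\,\mathrm{CH}^{d+1}(A, 1)$ for every $n$. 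Pushing forward via $\pi_{\ast}: \mathrm{CH}^{d+1}(A, 1)/F_1^2 \xrightarrow{\sim} K_1^{\mathrm{M}}(k) = k^{\ast}$ places the class of $x$ in $\bigcap_n n k^{\ast} = 0$, since $k^{\ast}$ has no nontrivial divisible subgroup for $k/\mathbb{Q}_p$; hence $x \in F_1^2$.

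For the Jacobian statement, with $A = \mathrm{Jac}(C)$ and $C(k) \neq \emptyset$, Theorem \ref{canonical isomorphisms} at $r = s = 1$ (hence $r! = 1$) gives an integral isomorphism $F_1^2/F_1^3 \cong S_1(k; A, \mathcal{K}_1^{\mathrm{M}}) \cong K(k; A, \mathbb{G}_{\mathrm{m}})$ via \eqref{two somekawa}. The decomposition of $F_1^2/F_1^3$ as a direct sum of a finite group and a divisible group then reduces to the analogous statement for this Mackey product, established by Spiess in the appendix of Yamazaki's paper. For the divisible-part identification, I would invoke the refined equality $\mathrm{Ker}(\rho_A^1/n) = (F_1^3 + n\,\mathrm{CH}^{d+1}(A, 1))/n\,\mathrm{CH}^{d+1}(A, 1)$ of Remark \ref{remark 2} for odd $n$ (which rests on Spiess's condition $(\ast)_1$). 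For $x \in F_1^2$, the splitting $\mathrm{CH}^{d+1}(A, 1)/F_1^3 \cong k^{\ast} \oplus F_1^2/F_1^3$ coming from (i) then translates the condition $x \in K_1$ into divisibility of $[x] \in F_1^2/F_1^3$ by every odd $n$, which after $\otimes \mathbb{Z}[1/2]$ places $[x]$ in $(F_1^2/F_1^3)_{\mathrm{div}} \otimes \mathbb{Z}[1/2]$. The final isomorphism with $\mathrm{Ker}(\mathrm{rec}_C) \otimes \mathbb{Z}[1/2]$ should follow by relating $SK_1(A)$ and $SK_1(C) = V(C)$ through the Albanese map $\varphi: C \to A$ and identifying $\mathrm{Ker}(\mathrm{rec}_C)$ with the maximal divisible subgroup of $V(C)$.

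The delicate point is in (i), where Theorem \ref{canonical isomorphisms} only controls the prime-to-$r!$ component: integral divisibility at primes dividing $r!$ requires reading off divisibility directly from the image of $\psi_{r,s}'$ and combining this with vanishing of the complement $F_s^{r+s}/(G_s^{r+s} + F_s^{r+s+1}) \otimes \mathbb{Z}[1/r!]$. The Jacobian identification with $\mathrm{Ker}(\mathrm{rec}_C)$ is also subtle, requiring compatibility of the cycle-class and reciprocity diagrams along the Albanese map.
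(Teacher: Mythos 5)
Your proposal assembles the right ingredients, but there is a genuine gap in part (i) at the primes dividing $r!$, and it sits exactly at the point you yourself flag as delicate. Knowing that $S_{r}(k;A,\mathcal{K}_{s}^{\mathrm{M}})$ is divisible (via Kahn--Yamazaki and Yamazaki's divisibility of the Mackey product) and that the complement $F_{s}^{r+s}/(G_{s}^{r+s}+F_{s}^{r+s+1})$ vanishes after $\otimes\,\mathbb{Z}[1/r!]$ does \emph{not} yield integral divisibility of $F_{s}^{r+s}/F_{s}^{r+s+1}$: that group could still be an extension of a nonzero $r!$-torsion group by a divisible one, and such an extension need not be divisible (already for $\nu=3$, $s=1$ one has $r=2$, so the prime $2$ is genuinely at stake). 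The paper closes this with one observation you are missing: since $S_{r}$ is divisible, $r!\,S_{r}=S_{r}$, so the identity $\Phi_{r,s}'\circ\Psi_{r,s}'=r!$ of Proposition \ref{Psi_{r,s}} already shows that $\Phi_{r,s}'$ maps $\mathrm{Im}(\Psi_{r,s}')\subset F_{s}^{r+s}/F_{s}^{r+s+1}$ \emph{onto} $S_{r}$; combined with the injectivity of \eqref{injection} this forces the injection to be an \emph{integral} isomorphism $F_{s}^{r+s}/F_{s}^{r+s+1}\xrightarrow{\sim}S_{r}(k;A,\mathcal{K}_{s}^{\mathrm{M}})$, and divisibility follows. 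No separate analysis at primes dividing $r!$ is needed.

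In part (ii) your route to the containments is valid but different from the paper's: the paper deduces $K_{1}\subset F_{1}^{2}$ from the compatibility of $\mathrm{rec}_{A}$ with the norm map and the injectivity of $\mathrm{rec}_{k}:k^{*}\to\pi_{1}^{\mathrm{ab}}(k)$, rather than from Corollary \ref{evaluation kernel}; if you use the latter, note that you must have the inclusion $\mathrm{Ker}(\rho_{A}^{1}/n)\subset(F_{1}^{2}+n\mathrm{CH})/n\mathrm{CH}$ for \emph{all} $n$ (which does hold, since diagram \eqref{diagram-1} commutes unconditionally), because $\bigcap_{n\ \mathrm{odd}}nk^{*}\neq 0$ when $p=2$. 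For the Jacobian statement, two points need repair. First, the finite-plus-divisible decomposition of $F_{1}^{2}/F_{1}^{3}\cong K(k;J,\mathbb{G}_{\mathrm{m}})$ is not Spiess's appendix result (that gives the injectivity $(*)_{1}$ of the Somekawa map, which you correctly use elsewhere); it comes from Somekawa's isomorphism $K(k;J,\mathbb{G}_{\mathrm{m}})\cong V(C)$ together with Saito's structure theorem for $V(C)$ over a local field, which also supplies $(V(C))_{\mathrm{div}}\cong\mathrm{Ker}(\mathrm{rec}_{C})$. Second, you only argue the inclusion $K_{1}/F_{1}^{3}\otimes\mathbb{Z}[1/2]\subset(F_{1}^{2}/F_{1}^{3})_{\mathrm{div}}\otimes\mathbb{Z}[1/2]$; the reverse inclusion must be stated, and it follows because the subgroup $D\subset F_{1}^{2}$ with $D/F_{1}^{3}=(F_{1}^{2}/F_{1}^{3})_{\mathrm{div}}$ has divisible image in each finite group $\pi_{1}^{\mathrm{ab}}(A)/n$, hence $D\subset K_{1}$.
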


\begin{proof}
(i) By \cite[Lemma 2.4, Proposition 3.1]{Y2},
the Mackey product 
$A^{\stackrel{\mathrm{M}}{\otimes} r} \stackrel{\mathrm{M}}{\otimes} 
\mathbb{G}_{\mathrm{m}}^{\stackrel{\mathrm{M}}{\otimes} s} (k)$
is divisible if $r+s \geq 3$, $s > 0$.
Then $S_{r}(k; A, \mathcal{K}_{s}^{\mathrm{M}})$ is also divisible,
hence we have $r!S_{r}(k; A, \mathcal{K}_{s}^{\mathrm{M}}) = S_{r}(k; A, \mathcal{K}_{s}^{\mathrm{M}})$.
Therefore \eqref{injection} is also surjective by Proposition \ref{Psi_{r,s}}.

\noindent (ii) 
We have $F_{1}^{3} \subset \bigcap_{n} \mathrm{Ker} (\rho_{A,n}) = K_{1}$.
Let $\pi : A \to \mathrm{Spec} \, k$ be the structure morphism.
We define $\pi_{1}^{\rm{geo}}(X) := \mathrm{Ker} (\pi_{*} : \pi_{1}^{\rm{ab}}(X) \twoheadrightarrow \pi_{1}^{\rm{ab}}(k))$ 
and $V(X) := \mathrm{Ker} (\mathrm{Nm} : SK_{1}(X) \to k^{*})$.
We have the following commutative diagram
\begin{center}
\begin{tikzcd}
F_{1}^{2} \arrow[r, hook]\arrow{d}[sloped, below]{\sim}
&\mathrm{CH}^{d+1}(A, 1) \arrow[r, twoheadrightarrow, "\Phi_{0,1}"]\arrow{d}[sloped, below]{\sim}
&K_{1}^{\mathrm{M}}(k) \arrow[d, equal] \\
V(A) \arrow[r, hook]\arrow[d]
&SK_{1}(A) \arrow[r, "\rm{Nm}"]\arrow[d, "\mathrm{rec}_{A}"']
&k^{*} \arrow[d, hook, "\mathrm{rec}_{k}"] \\
\pi_{1}^{\rm{geo}}(A) \arrow[r, hook]
&\pi_{1}^{\rm{ab}}(A) \arrow[r, twoheadrightarrow, "\pi_{*}"']
&\pi_{1}^{\rm{ab}}(k).
\end{tikzcd}
\end{center}
Here the injectivity of $\mathrm{rec}_{k}$ follows from the local class field theory of $k$, 
which yields 
$K_{1} \subset F_{1}^{2}$ by using Theorem \ref{canonical isomorphisms}.
Now we assume $A = \mathrm{Jac} (C)$ and denote by $J$.
We have $F_{1}^{2} / F_{1}^{3} \cong K(k; J, \mathbb{G}_{\mathrm{m}}) \cong V(C)$,
where the second isomorphism is defined in \cite[Theorem 2.1]{S}.
It is concluded that 
$F_{1}^{2} / F_{1}^{3}$ is the direct sum of a finite group and a divisible group
by Corollary 5.2 in \cite{Sa2}
and that $( F_{1}^{2} / F_{1}^{3} )_{{\rm div}} \cong \mathrm{Ker}(\mathrm{rec}_{C})$
by Theorem 5.1 in \cite{Sa2}.
Take the subgroup $F_{1}^{3} \subset D \subset F_{1}^{2}$ corresponding to $( F_{1}^{2} / F_{1}^{3} )_{{\rm div}}$.
It is enough to show $D = K_{1}$.
The composition
$D / F_{1}^{3} \to \pi_{1}^{\mathrm{ab}} (J) \twoheadrightarrow \pi_{1}^{\mathrm{ab}} (J) / n$
is zero map for any integer $n > 0$
since $D / F_{1}^{3}$ is divisible
and $\pi_{1}^{\mathrm{ab}} (J) / n$ is finite.
Therefore $D \subset K_{1}$.
We know $(F_{1}^{2} / F_{1}^{3}) / ( F_{1}^{2} / F_{1}^{3} )_{{\rm div}}$ is a finite group.
Let $n_{1}$ be its order, then $n_{1}(F_{1}^{2} / F_{1}^{3}) = ( F_{1}^{2} / F_{1}^{3} )_{{\rm div}}$. 
It holds that $F_{1}^{2} / (n_{1}F_{1}^{2} + F_{1}^{3}) = F_{1}^{2} / D$.
We have the following commutative diagrams
\begin{center}
\begin{tikzcd}
F_{1}^{2} / (n_{1}F_{1}^{2} + F_{1}^{3}) \arrow[r, "\sim"]\arrow[d, "\rho_{J} / n_{1}"'] 
&K(k; J, \mathbb{G}_{\mathrm{m}}) / n_{1} \arrow[d, hook, "s_{n_{1}}"] \\
\mathrm{fil}_{\mathrm{HS}}^{2}H^{2d+1}(J, \mathbb{Z} / n_{1}(d+1)) \arrow[r, dotted, "\sim"]
&H^{2}(k, J[n_{1}] \otimes \mu_{n_{1}})
\end{tikzcd}
\end{center}
\begin{center}
\begin{tikzcd}
0 \arrow[r] &D \arrow[r] &F_{1}^{2} \arrow[r]\arrow[d, "\rho_{J}"'] 
&F_{1}^{2} / (n_{1}F_{1}^{2} + F_{1}^{3}) \arrow[r]\arrow[d, "\rho_{J} / n_{1}"] &0 \\
&&H^{2d+1}(J, \Hat{\mathbb{Z}}(d+1)) \arrow[r] &H^{2d+1}(J, \mathbb{Z} / n_{1}(d+1)) \arrow[r] &0
\end{tikzcd}
\end{center}
where in the first diagram, the bottom dotted isomorphism exists only after $\otimes \mathbb{Z}[1/2]$
and the injectivity of $s_{n_{1}}$ has been shown in \cite[Appendix]{Y}.
The kernel of $\rho_{J}$ in the second diagram is equal to $K_{1}$ since $K_{1} \subset F_{1}^{2}$.
Hence we deduce $K_{1} \otimes \mathbb{Z}[1/2] \subset D \otimes \mathbb{Z}[1/2]$ 
from the injectivity of $\rho_{J} / n_{1} \otimes \mathbb{Z}[1/2]$ in the second diagram.
\end{proof}

\noindent \textit{Acknowledgement.}
I thank to my graduate supervisor Prof. Takao Yamazaki 
for his unstinted support and encouragement.
I was suggested these problems from him and was lectured various things.
He read this paper over again and provided many valuable comments for improvement.
In particular, the proof of Proposition \ref{prop phi} has completed by his suggestion 
that using $K_{r}(k; A, \mathcal{K}_{s}^{\mathrm{M}})$ 
instead of $K_{r,s}(k; A, \mathbb{G}_{\mathrm{m}})$ used in the initial version.
Section \ref{subsection H-S} and \ref{cycle-somekawa} were proven by him for any (perfect) field.


\begin{thebibliography}{99}
\bibitem[A]{A} R. Akhtar, \textit{Milnor K-theory of smooth varieties}, K-Theory \textbf{32} (2004), no.3, 269-291.
\bibitem[B]{B} A. Beauville, \textit{Sur l'anneau de Chow d'une vari\'{e}t\'{e} ab\'{e}lienne}, Math. Ann. \textbf{273} (1986), 647-651.
\bibitem[Bl]{Bl} S. Bloch, \textit{Some elementary theorems about algebraic cycles on Abelian varieties}, Invent. Math.
\textbf{37} (1976), 215-228.
\bibitem[DM]{DM} C. Deninger and J. Murre, \textit{Motivic decomposition of abelian schemes and the Fourier transform},
J. Reine Angew. Math. \textbf{422} (1991), 201-219.
\bibitem[F]{F} W. Fulton, \textit{Intersection theory}, second ed., Ergebnisse der Mathematik und ihrer Grenzgebiete.
3. Folge. A Series of Modern Surveys in Mathematics, vol. 2, Springer-Verlag, Berlin,
1998.
\bibitem[G]{G} E. Gazaki, \textit{On a filtration of $CH_{0}$ for an abelian variety}, Compositio Math. \textbf{151} (2015), 435-460.
\bibitem[KY]{KY} B. Kahn and T. Yamazaki, \textit{Voevodsky's motives and Weil reciprocity}, Duke Math. J.
\textbf{162} (2013), no. 14, 2751-2796. 
\bibitem[M]{M} J. S. Milne, \textit{Abelian Varieties}, ver. 2.0, 2008, available at https://www.jmilne.org/math/CourseNotes/av.html. 
\bibitem[R]{R} W. Raskind, \textit{Higher l-adic Abel-Jacobi mappings and filtrations on Chow groups}, Duke Math. J. \textbf{78} (1995), no. 1, 33-57.
\bibitem[RS]{RS} W. Raskind and M. Spiess, \textit{Milnor K-groups and zero-cycles on products of curves over p-adic fields}, Compositio Math. \textbf{121} (2000), no. 1, 1-33.
\bibitem[S]{Sa} S. Saito, \textit{A global duality theorem for varieties over global fields}. 
In: J. F. Jardine, V. P. Snaith (eds.), \textit{Algebraic K-Theory: Connection with Geometry and Topology}, 
Lake Louise 1987 (NATO ASI Ser. C 279), 425-444, Kluwer Acad. Publ., Dordrecht, 1989
\bibitem[S85]{Sa2} S. Saito, \textit{Class field theory for curves over local fields}, J. Number Theory \textbf{21} (1985), no. 1, 44-80.
\bibitem[So]{S} M. Somekawa, \textit{On Milnor K-groups attached to semi-abelian varieties}, K-Theory \textbf{4} (1990), 105-119.
\bibitem[Su]{Su} R. Sugiyama, \textit{Motivic homology of a semiabelian variety over a perfect field}, Doc. Math. \textbf{19} (2014), 1061-1084.
\bibitem[SV]{SV} A. Suslin and V. Voevodsky, \textit{Bloch-Kato conjecture and motivic cohomology with finite coefficients}, 
in The arithmetic and geometry of algebraic cycles (Banff, AB, 1998), NATO Sci. Ser. C Math. Phys. Sci. \textbf{548}, 
Kluwer, 2000, 117-189.
\bibitem[T]{T} B. Totaro, \textit{Milnor K-Theory is the simplest part of algebraic K-Theory}, K-Theory \textbf{6} (1992), 177-189.
\bibitem[V]{V} V. Voevodsky, \textit{On motivic cohomology with $\mathbb{Z}/l$-coefficients}, Ann. of Math.\textbf{174} (2011), 401-438.
\bibitem[Y05]{Y} T. Yamazaki, \textit{On Chow and Brauer groups of a product of Mumford curves}, Math. Ann. \textbf{333} (2005),
no. 3, 549-567.
\bibitem[Y09]{Y2} T. Yamazaki, \textit{Class field theory for a product of curves over a local field}, Math. Z. \textbf{261} (2009), no. 1,
109-121.
\end{thebibliography}
\end{document}